\documentclass{amsart}
\usepackage[USenglish]{babel}
\usepackage[T1]{fontenc}
\usepackage{mathtools,amsthm,amsfonts,amssymb}
\usepackage{enumitem}
\usepackage{xcolor}
\usepackage{cite}
\usepackage[colorlinks,allcolors=blue,backref=page]{hyperref}

\numberwithin{equation}{section}
\mathtoolsset{showonlyrefs}
\renewcommand*{\backrefalt}[4]{%
  \ifcase #1 %
  \or
    ↑~#2%
  \else
    ↑~#2%
  \fi}
\setlist[itemize,2]{label=$\circ$}

\newcommand{\sbar}[1]{\overline{\vphantom{1}#1}}

\newtheorem{theorem}{Theorem}[section]
\newtheorem{lemma}[theorem]{Lemma}
\newtheorem{proposition}[theorem]{Proposition}
\newtheorem{corollary}[theorem]{Corollary}

\theoremstyle{definition}
\newtheorem{definition}[theorem]{Definition}
\newtheorem{example}[theorem]{Example}

\theoremstyle{remark}
\newtheorem{remark}[theorem]{Remark}	

\title[Palindromic Poincar\'e polynomials in~\(B_n\)]{Palindromic Poincar\'e polynomials in~\(B_n\)}

\author{Gaston Burrull}
\address{(Gaston Burrull) \newline \indent Beijing International Center for Mathematical Research, Peking University, No.\@ 5 Yiheyuan Road, Haidian District, Beijing 100871, China}
\email{gaston(at)bicmr(dot)pku(dot)edu(dot)cn}
\keywords{Bruhat order, Poincar\'e polynomials, Lehmer codes, Kempf elements, rationally smooth Schubert varieties, Weyl groups of type B}
\subjclass[2020]{Primary 05E16; Secondary 20F55, 14M15, 06A07}

\begin{document}
\begin{abstract}
    We describe the set of palindromic Poincar\'e polynomials in~\(B_n\).
    As a key ingredient in this classification, we prove that \(A_n\) embeds into \(B_n\) as a lower interval in Bruhat order.
    Consequently, all Poincar\'e polynomials \(P_w(q)\) and all
    Kazhdan--Lusztig polynomials \(P_{x,y}(q)\) of~\(A_n\) occur
    in~\(B_n\).
\end{abstract}
\maketitle
\section{Introduction}

For a Coxeter group \((W,S)\) with Bruhat order \(\leq\) and length function \(\ell\), a basic problem is to understand the rank-generating functions of lower Bruhat intervals.
More precisely, for \(w\in W\), we consider the \emph{Poincar\'e polynomial}
\begin{equation*}
    P_w(q)=\sum_{u\leq w} q^{\ell(u)}.
\end{equation*}
When \(W\) is a Weyl group, \(P_w(q)\) is the Poincar\'e polynomial of the Schubert variety \(X_w\), with cohomological degrees divided by two.
Its palindromicity is equivalent to rational smoothness of \(X_w\) and reflects Poincar\'e duality in its rational cohomology.

The case when \(P_w(q)\) is non-palindromic remains much less understood than the palindromic case.
Some of the strongest structural results about factorizations of such polynomials come from \emph{Billey--Postnikov decompositions}~\cite{BP05,RS16,GG25}.

In contrast, when \(W\) is a finite Weyl group, palindromicity admits a precise factorization characterization.
Namely, \(P_w(q)\) is palindromic if and only if it factors as a product of \(q\)-integers
\begin{equation*}
    [m]_q:=1+q+\cdots+q^{m-1},
    \qquad m\in\mathbb Z_{\geq 1}.
\end{equation*}
This characterization follows from work of Akyildiz--Carrell, Gasharov, Billey, and Billey--Postnikov~\cite{AC12,Gas98,Bil98,BP05}; see Slofstra~\cite[Theorem~3.1]{Slo15} for a unified statement and proof.

We denote by \(\mathrm{RSm}(W)\) the set of \emph{rationally smooth} elements of~\(W\), that is, the elements \(w\) such that \(P_w(q)\) is palindromic.
In \(S_n\), rational smoothness is equivalent to smoothness~\cite{Deo85}, so we denote \(\mathrm{RSm}(S_n)\) by \(\mathrm{Sm}_n\).

Lehmer codes provide a combinatorial framework for many of the \(q\)-integer factorizations described above.
Following Bolognini--Sentinelli~\cite{BS25}, a \emph{Lehmer code} \(L\) of a finite Coxeter group \(W\) of rank \(r\), with exponents \(e_1,\ldots,e_r\), is a bijection
\begin{equation*}
    L\colon W\longrightarrow \prod_{i=1}^r\{0,1,\ldots,e_i\}
\end{equation*}
such that \(L^{-1}\) is order-preserving, where the codomain is equipped with the componentwise order \(\preceq\) and \(W\) with the Bruhat order \(\leq\).
We define \(\mathrm{Pr}(L)\subseteq W\) to be the set of \emph{\(L\)-principal elements}, that is, the set of \(w\in W\) such that, for every \(u\in W\),
\begin{equation*}
    u\leq w
    \quad\Longrightarrow\quad
    L(u)\preceq L(w).
\end{equation*}

For an \(L\)-principal element, the Lehmer code explicitly describes the elements of its lower Bruhat interval.
Indeed, the definition of \(L\)-principality, together with the fact that \(L^{-1}\) is order-preserving, shows that, for every \(w\in\mathrm{Pr}(L)\), the map \(L\) restricts to a bijection from the lower Bruhat interval \([\mathrm{id},w]\) onto the tuple interval
\begin{equation*}
    [0_r,L(w)]
    :=
    \{\mathbf a\in\mathbb Z_{\geq 0}^r:\mathbf a\preceq L(w)\},
    \qquad
    0_r=(0,\ldots,0).
\end{equation*}

The code of an \(L\)-principal element determines its Poincar\'e polynomial.
For a finite tuple \(\mathbf a=(a_1,\ldots,a_d)\) of nonnegative integers,
define its \emph{\(q\)-integer product} by
\begin{equation*}
    \Pi_q(\mathbf a):=\prod_{i=1}^d[a_i+1]_q,
\end{equation*}
which is the rank-generating function of the tuple interval
\([0_d,\mathbf a]\).
Every Lehmer code is rank-preserving, so
\begin{equation*}
    \ell(u)=\sum_{i=1}^r L(u)_i
\end{equation*}
for every \(u\in W\). Consequently, if \(w\in\mathrm{Pr}(L)\), then
\begin{equation*}
    P_w(q)
    =
    \sum_{\mathbf a\preceq L(w)}
    q^{a_1+\cdots+a_r}
    =
    \Pi_q(L(w)).
\end{equation*}

Our main result gives a complete description of the palindromic
Poincar\'e polynomials in \(B_n\) in terms of two explicit families of
\(q\)-integer products. We first recall the type~\(A\) description,
which provides one of these families.

\subsection{Palindromic Poincar\'e polynomials in \texorpdfstring{\(S_n\)}{Sn}}
\label{subsec:Sn}
We regard \(S_n\) as the symmetric group on \(\{1,\ldots,n\}\) and write its elements in the usual one-line notation \(w=w_1\cdots w_n\).

Let \(L_{S_n}\) denote the \emph{canonical Lehmer code for \(S_n\)}.
Under the standard identification \(S_n\cong A_{n-1}\), it agrees with
the code \(L_{A_{n-1}}\) of
Definition~\ref{def:lehmer code for An LAn}.
As observed in~\cite[Remark~7.15]{BS25}, a result of Abreu and Nigro~\cite[Theorem~1.6]{AN24} implies that all palindromic Poincar\'e polynomials of \(S_n\) are obtained from the \(L_{S_n}\)-principal elements.
More precisely,
\begin{equation}\label{eq:all palindromic for one Lehmer in Sn}
    \{P_w(q):w\in\mathrm{Sm}_n\}
    =
    \{P_w(q):w\in\mathrm{Pr}(L_{S_n})\}.
\end{equation}
Since different \(L_{S_n}\)-principal elements may have the same Poincar\'e polynomial, we next recall the nonredundant parametrization from~\cite[Section~7]{BS25}.

We record three elementary definitions.

Let \(F_n^{\uparrow}\) be the set of \emph{weakly increasing Fubini words} of length \(n\), that is,
\begin{equation*}
    F_n^{\uparrow}
    :=
    \left\{
        (a_1,\ldots,a_n):
        a_1=0
        \text{ and }
        a_i\in\{a_{i-1},a_{i-1}+1\}
        \text{ for }2\leq i\leq n
    \right\}.
\end{equation*}
In particular, \(\lvert F_n^{\uparrow}\rvert=2^{n-1}\).

Let \(U_n\subseteq S_n\) denote the set of \emph{unimodal permutations}, that is, the permutations avoiding the patterns \(213\) and \(312\).

For a collection \(\mathcal X\) of tuples, set
\begin{equation*}
    \Pi_q(\mathcal X):=
    \{\Pi_q(\mathbf a):\mathbf a\in\mathcal X\}.
\end{equation*}

Bolognini and Sentinelli show that each fiber of the map
\begin{equation*}
    w\longmapsto P_w(q),
    \qquad w\in\mathrm{Pr}(L_{S_n}),
\end{equation*}
contains a unique unimodal permutation. Moreover, the augmented-code map
\begin{equation*}
    \begin{aligned}
        U_n &\longrightarrow F_n^{\uparrow},\\
        w &\longmapsto (0,L_{S_n}(w)),
    \end{aligned}
\end{equation*}
is a bijection.
Since the initial zero contributes the factor \([1]_q=1\), for every
\(w\in U_n\) we have
\begin{equation*}
    \Pi_q\bigl((0,L_{S_n}(w))\bigr)
    =
    \Pi_q(L_{S_n}(w))
    =
    P_w(q).
\end{equation*}
Together, the preceding bijection and
Equation~\eqref{eq:all palindromic for one Lehmer in Sn} give
\begin{equation}\label{eq:palindromic-polynomials-Sn}
    \{P_w(q):w\in\mathrm{Sm}_n\}
    =
    \Pi_q(F_n^{\uparrow}).
\end{equation}
In particular, there are \(2^{n-1}\) distinct palindromic Poincar\'e polynomials in~\(S_n\).

\subsection{Palindromic Poincar\'e polynomials in~
\texorpdfstring{\(B_n\)}{Bn}}
We write the elements of \(B_n\) as signed permutations
\(w=w_1\cdots w_n\) in one-line notation. A bar indicates negation;
for example, the negative entries of
\(4\sbar6\sbar1\sbar2 3\sbar5\in B_6\) are
\(\sbar6,\sbar1,\sbar2,\) and \(\sbar5\).
Let \(L_{B_n}\) denote the \emph{canonical Lehmer code for \(B_n\)}
(Definition~\ref{def:lehmer code for Bn LBn}).

In \(B_n\), the situation is more subtle than in \(S_n\).
Although every palindromic Poincar\'e polynomial in \(B_n\) is still a
product of \(q\)-integers, the principal elements of \(L_{B_n}\)
account for only one part of the answer. Unlike in \(S_n\) and \(H_3\)~\cite[Example~6.9]{BS25}, this cannot, in general, be remedied
by choosing a different Lehmer code.

Indeed, the analogue of
\eqref{eq:all palindromic for one Lehmer in Sn} fails already in \(B_4\).
An exhaustive SageMath computation shows that, for every Lehmer code
\(L\) for \(B_4\), one has
\begin{equation}\label{eq: type-B-lehmer-insufficient}
    \left|\{P_w(q):w\in\mathrm{Pr}(L)\}\right|
    <
    \left|\{P_w(q):w\in\mathrm{RSm}(B_4)\}\right|.
\end{equation}

Our classification therefore consists of two families of polynomials:
the \emph{type~\(B\) family}, arising from \(L_{B_n}\), and the
\emph{type~\(A\) family}, transported from \(A_n\) through Stembridge's
\emph{bottom map}
(Theorem~\ref{thm: iso posets An to Bn intro}).

We first describe the type~\(B\) family.
A tuple \(\mathbf a=(a_1,\ldots,a_n)\in\mathbb Z^n\) is called a
\emph{reflected bounded-growth tuple}, or an \emph{rbg tuple}, if
\begin{equation*}
    0\leq a_i\leq 2i-1
    \quad\text{for }1\leq i\leq n,
\end{equation*}
and, for \(2\leq i\leq n\),
\begin{equation*}
    a_{i-1}<2i-3
    \quad\Longrightarrow\quad
    a_i\leq \min\{a_{i-1}+1,\,2i-3-a_{i-1}\}.
\end{equation*}
We denote the set of reflected bounded-growth \(n\)-tuples by \(\mathcal C_n\).
Note that \(F_n^{\uparrow}\subseteq\mathcal C_n\).

The type~\(B\) family is
\begin{equation*}
    \Pi_q(\mathcal C_n).
\end{equation*}
In Theorem~\ref{thm:equivalence of rbg codes}, we show that the tuples
in \(\mathcal C_n\) are precisely the codes of the \(L_{B_n}\)-principal elements.
We also characterize these elements by avoidance of eight signed patterns and show that there are \(C_n+C_{n+1}-1\) such elements, where \(C_n\) denotes the \(n\)th Catalan number.

Before introducing the second family, we fix the conventions for \(A_n\)
that we use throughout the paper.
We realize \(A_n\) as the symmetric group on
\(\{0,1,\ldots,n\}\) and write its elements in one-line notation as \(x=x_0x_1\cdots x_n\).
Under the standard identification \(A_n\cong S_{n+1}\), the element \(x_0x_1\cdots x_n\) corresponds to \((x_0+1)(x_1+1)\cdots(x_n+1)\).
For example, \(2310\in A_3\) corresponds to \(3421\in S_4\).
This zero-based convention is convenient for the one-line description
of the bottom map given below.

The type~\(A\) family is
\begin{equation*}
    \Pi_q(F_{n+1}^{\uparrow}),
\end{equation*}
which, by Section~\ref{subsec:Sn}, is the set of palindromic Poincar\'e polynomials in \(A_n\).

Our classification thus uses the principal elements of the two Lehmer
codes \(L_{A_n}\) and \(L_{B_n}\).
These principal elements are precisely the \emph{Kempf elements} in \(A_n\) and \(B_n\), respectively \cite[Propositions~4.3 and~4.9]{HL85}.
Lakshmibai introduced these elements to study the corresponding
\emph{Kempf varieties} and proved cohomology-vanishing theorems for
them~\cite{Lak76}.

The discussion above leads to our main theorem.
\begin{theorem}\label{thm:main-intro}
    The set of palindromic Poincar\'e polynomials in \(B_n\) is
    \begin{equation*}
        \Pi_q\bigl(
            \mathcal C_n\cup F_{n+1}^{\uparrow}
        \bigr).
    \end{equation*}
    Equivalently,
    \begin{equation*}
        \{P_w(q):w\in\mathrm{RSm}(B_n)\}
        =
        \left\{
            P_w(q):
            w\in\mathrm{Pr}(L_{A_n})\cup\mathrm{Pr}(L_{B_n})
        \right\}.
    \end{equation*}
\end{theorem}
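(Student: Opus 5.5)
The plan is to prove the two inclusions separately.

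\emph{Inclusion ``\(\supseteq\)''.} Both families occur. For the type~\(B\) family, Theorem~\ref{thm:equivalence of rbg codes} identifies \(\mathcal C_n\) with \(L_{B_n}(\mathrm{Pr}(L_{B_n}))\). For \(w\in\mathrm{Pr}(L_{B_n})\) the introduction gives \(P_w(q)=\Pi_q(L_{B_n}(w))\), which is a product of \(q\)-integers and hence palindromic. So \(\Pi_q(\mathcal C_n)\subseteq\{P_w(q):w\in\mathrm{RSm}(B_n)\}\).

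For the type~\(A\) family, invoke Theorem~\ref{thm: iso posets An to Bn intro}: Stembridge's bottom map embeds \(A_n\) as a lower Bruhat interval of \(B_n\), and the image of \(A_n\) is closed under going down. Two facts are needed:
\begin{itemize}
\item the bottom map \(\beta\) is a poset isomorphism onto its image, so \([\mathrm{id},x]\cong[\mathrm{id},\beta(x)]\);
\item \(\beta\) shifts length uniformly. Since \(\beta(\mathrm{id})=\mathrm{id}\) and both posets are graded by length, the shift is zero, so \(\beta\) is length-preserving.
\end{itemize}
Then \(P_{\beta(x)}(q)=P_x(q)\) for all \(x\in A_n\). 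By \eqref{eq:palindromic-polynomials-Sn} applied to \(A_n\cong S_{n+1}\), this yields \(\Pi_q(F_{n+1}^{\uparrow})\). The equivalent formulation with \(\mathrm{Pr}(L_{A_n})\cup\mathrm{Pr}(L_{B_n})\) follows the same way, via \eqref{eq:all palindromic for one Lehmer in Sn} and the Bolognini--Sentinelli parametrization.

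\emph{Inclusion ``\(\subseteq\)''.} This is the main obstacle. Let \(w\in\mathrm{RSm}(B_n)\). I would use the Billey--Postnikov style factorization underlying Slofstra's Theorem~3.1 and argue by induction on \(n\). Rational smoothness in a finite Weyl group gives a leaf \(s\) of the Dynkin diagram and a Grassmannian BP decomposition \(w=vu\) with respect to the parabolic \(J=S\setminus\{s\}\). The resulting factorization is
\[
P_w(q)=P_{v}^{J}(q)\,P_u(q),
\]
where \(u\) is rationally smooth in the parabolic subgroup and \(v\) is a rationally smooth Grassmannian element, whose relative Poincaré polynomial is an explicit \(q\)-integer or short product.

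There are two choices of leaf:
\begin{enumerate}
\item Removing the end of the \(B_n\) diagram adjacent to the double bond leaves \(A_{n-1}\). By \eqref{eq:palindromic-polynomials-Sn}, \(P_u\in\Pi_q(F_n^{\uparrow})\), and the Grassmannian factor lies in a known list (Billey's classification of rationally smooth Grassmannian Schubert varieties in type~\(B\)).
\item Removing the other end leaves \(B_{n-1}\), so induction gives \(P_u\in\Pi_q(\mathcal C_{n-1}\cup F_n^{\uparrow})\). Again the Grassmannian factor is \([m]_q\) or \([m]_q[k]_q\)-type with explicit allowed \(m\).
\end{enumerate}

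The core combinatorial step is to check closure: appending the allowed new factor to a tuple in \(\mathcal C_{n-1}\) (respectively \(F_n^{\uparrow}\)) gives a tuple whose multiset of \(q\)-integers agrees with that of some tuple in \(\mathcal C_n\cup F_{n+1}^{\uparrow}\). Since \(\Pi_q\) depends only on the multiset of entries, I would compare multisets. I would do this by matching the constraint in the rbg recursion \(a_n\le\min\{a_{n-1}+1,2n-3-a_{n-1}\}\) against the possible degrees of the Grassmannian factor relative to the top degree of \(P_u\); that top degree is \(\ell(u)\), and the compatibility of lengths is where the BP condition constrains \(m\).

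If the case analysis becomes unwieldy, the fallback is to use Billey's pattern characterization of rationally smooth elements in \(B_n\) directly. From each rationally smooth \(w\) one would construct a Kempf element of \(B_n\) or of \(A_n\) (via \(\beta\)) with the same Poincaré polynomial.
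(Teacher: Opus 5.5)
Your ``\(\supseteq\)'' direction is correct and is the paper's argument. The gap is in ``\(\subseteq\)''. The step you call the core combinatorial step is the entire content of this inclusion, and the mechanism you propose for it cannot work.

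Your two leaf cases correspond to Billey's rules: \ref{rule:billey-1}--\ref{rule:billey-4} for the leaf \(s_{k-1}\), and \ref{rule:billey-5} for \(s_0\). But an induction hypothesis that records only \(P_u\in\Pi_q(\mathcal C_{n-1}\cup F_n^{\uparrow})\) cannot decide which factor may be appended. Admissibility depends on \(u\) itself, not on \(P_u\), and it is not a length condition.

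\begin{itemize}
\item Over the leaf \(s_{n-1}\), the factor \([2n]_q\) occurs (rule~\ref{rule:billey-3} with \(d=n\)), but only when \(u\) is the longest element of \(B_{n-1}\). Appended to the zero tuple it would give \([2n]_q\notin\Pi_q(\mathcal C_n\cup F_{n+1}^{\uparrow})\) for \(n\geq2\).
\item The reflected bound constrains \(a_n\) through the last entry \(a_{n-1}\) of an ordered tuple. \(P_u\) does not determine that entry, because \(\Pi_q\) is not injective on \(\mathcal C_{n-1}\). For example, \(\Pi_q(101)=\Pi_q(110)\), yet these tuples allow \(a_4\leq2\) and \(a_4\leq1\) respectively. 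Passing to multisets discards exactly the information you need.
\end{itemize}

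The paper instead fixes one complete recursive factorization. At each stage it compares the position \(d\) of \(k\) in \(z\) with the position of \(k-1\) or \(\sbar{k-1}\) in \(z'\) (Lemma~\ref{lem:billey-factor-tuple-rbg}), which produces an rbg tuple directly.

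The second missing idea concerns the \(s_0\)-leaf step.
\begin{itemize}
\item Even when it occurs at the top stage, you must show that \([d+1]_q\,P_u\) lies in the union. For instance, landing in \(\Pi_q(F_{n+1}^{\uparrow})\) requires \(d\leq1+\max\mathbf a\), where \(P_u=\Pi_q(\mathbf a)\) with \(\mathbf a\in F_n^{\uparrow}\). You do not address this relation between the factor and \(u\).
\item When it occurs after several \(s_{k-1}\)-leaf steps, the intermediate \(u\in B_{n-1}\) may have a polynomial lying only in the type~\(A\) family of \(B_{n-1}\). Nothing in your scheme controls what happens when a further factor is appended to it.
\end{itemize}

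The paper handles this globally rather than inductively. Suppose every complete factorization of \(w\) uses \ref{rule:billey-5}.
\begin{itemize}
\item No \ref{rule:billey-3} or \ref{rule:billey-4} step precedes it; otherwise one could switch to an \ref{rule:billey-5}-free factorization of the longest element that step produces.
\item Rules \ref{rule:billey-1} and \ref{rule:billey-2} preserve \(\ell_0\), so \(\ell_0(w)=1\).
\item Hence \(w=\operatorname{bot}(x)\) by Proposition~\ref{prop: rex of x inter min zeros}, and \(P_w=P_x\) with \(x\in\mathrm{RSm}(A_n)\).
\end{itemize}
You use the bottom map only for ``\(\supseteq\)''; using it in ``\(\subseteq\)'' is what closes the case analysis.

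A minor point: the leaf theorem for BP decompositions, as usually stated, concerns \(w\) or \(w^{-1}\) and a leaf of the support of \(w\). Smaller or disconnected supports would therefore need separate treatment in your approach.
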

The bottom map \(\operatorname{bot}\colon A_n\longrightarrow B_n\)
has the following description in one-line notation~\cite[\S4]{Ste97}.
For \(x=x_0x_1\cdots x_n\in A_n\), delete the first entry \(x_0\).
If \(x_0\neq0\), replace the remaining entry \(0\) by \(\overline{x_0}\).
The resulting signed permutation is \(\operatorname{bot}(x)\).
For example, \(\operatorname{bot}(0231)=231\) and \(\operatorname{bot}(2310)=31\overline2\).

The following theorem describes the interaction of the bottom map with Bruhat order and is a key ingredient in the proof of Theorem~\ref{thm:main-intro}.
\begin{theorem}
    \label{thm: iso posets An to Bn intro}
    The bottom map
    \begin{equation*}
        \operatorname{bot}\colon(A_n,\leq)\longrightarrow(B_n,\leq)
    \end{equation*}
    is a poset embedding with image \([\mathrm{id},b_0]\), where
    \begin{equation*}
        b_0=(n-1)(n-2)\cdots1\,\sbar n.
    \end{equation*}
    This interval consists precisely of the signed permutations in \(B_n\) with at most one negative entry.
\end{theorem}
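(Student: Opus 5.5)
We will work throughout with explicit combinatorial descriptions of Bruhat order, namely the rank-matrix (tableau) criteria. For \(A_n\), realized as permutations of \(\{0,\dots,n\}\), we have \(x\le y\) if and only if \(x[i,j]\le y[i,j]\) for all \(i,j\), where \(x[i,j]=\#\{k\le i: x_k\ge j\}\). For \(B_n\) we use the standard criterion of Björner--Brenti: embed a signed permutation \(w\) into \(S_{\pm n}\) as the centrally symmetric permutation of \(\{\pm1,\dots,\pm n\}\); then Bruhat order in \(B_n\) is the restriction of the analogous rank-count comparison \(w[i,j]\le v[i,j]\), taken over \(i\in[-n,n]\) and \(j\in[-n,n]\).

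The plan has four steps, in this order.

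\emph{Step 1: image and injectivity.} The map \(\operatorname{bot}\) is injective, because \(x\) can be recovered from \(\operatorname{bot}(x)\). If \(\operatorname{bot}(x)\) has no negative entry, then \(x_0=0\). If it has the negative entry \(\sbar{m}\), then \(x_0=m\) and the position of \(\sbar m\) is the position of the \(0\). Consequently, the image is exactly the set \(B_n^{\le1}\) of signed permutations with at most one negative entry. Both sets have cardinality \((n+1)!\), since \(n!+n\cdot n!=(n+1)!\).

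\emph{Step 2: order preservation in both directions.} We show that \(x\le y\) if and only if \(\operatorname{bot}(x)\le\operatorname{bot}(y)\). For this we translate the \(B_n\) rank counts of \(\operatorname{bot}(x)\) into the \(A_n\) rank counts of \(x\). For a signed permutation \(w\) with at most one negative entry, the symmetric permutation in \(S_{\pm n}\) has a simple shape: in positions \(1,\dots,n\) every value is positive except possibly one. Splitting into the cases \(i,j>0\), \(i<0<j\), and so on, and using central symmetry, we aim to show that each count \(w[i,j]\) is an affine function of a single count \(x[i',j']\), with \(i'\in\{0,\dots,n\}\), and that the affine coefficients are the same for \(x\) and \(y\). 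Intuitively, the \(0\)-th position of \(x\) records the absolute value of the negative entry, and the barred entry \(\sbar{x_0}\) at the position of \(0\) plays the role of ``\(0\)'' in the count. Each \(B_n\) inequality then becomes an \(A_n\) inequality and conversely, with every \(A_n\) count arising, which gives both directions. This is the step I expect to be the main obstacle. If the bookkeeping becomes unwieldy, I would fall back on a covering argument: check that \(\operatorname{bot}\) preserves length (\(\ell_B(\operatorname{bot}x)=\ell_A(x)\), verified via the standard inversion formula in type \(B\)), and that it sends covers \(x\lessdot xt\) to covers and reflects covers inside \(B_n^{\le1}\).

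\emph{Step 3: the image is a lower interval.} We show that \(B_n^{\le1}\) is a lower order ideal. If \(u\le w\) and \(w\) has at most one negative entry, then the counts of negative entries among the first \(k\) positions are controlled by the \(B_n\) rank criterion at \(j=0\) (equivalently, the number of negative entries is a Bruhat-monotone statistic, since the count \(w[n,1]\)-type quantity governs it). Hence \(u\) has at most one negative entry as well.

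\emph{Step 4: identifying the maximum.} Since \(\operatorname{bot}\) is a poset isomorphism onto \(B_n^{\le1}\), and \(A_n\) has a maximum \(w_0=n(n-1)\cdots0\), the set \(B_n^{\le1}\) has the maximum \(\operatorname{bot}(w_0)=(n-1)\cdots1\,\sbar n\), which is \(b_0\). Combined with Step 3, this gives \(B_n^{\le1}=[\mathrm{id},b_0]\).
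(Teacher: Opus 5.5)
\textbf{Step~2 fails as formulated.} Your route through rank-matrix criteria is genuinely different from the paper's. However, Step~2, which carries the whole argument, rests on a false claim: the $B_n$ counts of $w=\operatorname{bot}(x)$ are not all affine functions of single $A_n$ counts of $x$.

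Take $w[i,j]=\#\{a\in[-n,n]\setminus\{0\}: a\le i,\ w(a)\ge j\}$; including $0$ only shifts some counts by constants. Let $p$ be the position of $0$ in $x$, and write $[P]$ for $1$ if $P$ holds and $0$ otherwise. A direct computation gives, for $i,j'\ge1$ and for $m\ge2$, $j\ge1$,
\[
    w[i,-j']=i+j'+\bigl[x_0>j'\text{ and }p>i\bigr],
    \qquad
    w[-m,j]=\bigl[x_0\ge j\text{ and }p\ge m\bigr].
\]
For $n=2$ this gives $w[1,-1]=2+[x=210]$. Every count of $A_2$ that is non-constant on $A_2$ is already non-constant on $A_2\setminus\{210\}$. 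Hence no affine function of a single count equals $w[1,-1]$, and the $B_2$ inequality at $(1,-1)$ is not equivalent to any single $A_2$ inequality. So the claim that ``each $B_n$ inequality becomes an $A_n$ inequality and conversely'' fails, and with it the mechanism you planned to use for both directions.

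\textbf{The step can be repaired.} Each correction term is a product of two $\{0,1\}$-valued counts, for example
\[
    \bigl[x_0>j'\text{ and }p>i\bigr]=x[0,j'+1]\bigl(x[i,1]-i\bigr),
\]
and each factor is monotone on $(A_n,\le)$ by the type-$A$ criterion. The remaining counts are single counts: $w[i,j]=x[i,j]$ for $i,j\ge1$, $w[-1,j]=x[0,j]$ for $j\ge1$, and the counts with $i,j\le-1$ reduce to these by central symmetry. Hence every $B_n$ count of $\operatorname{bot}(x)$ is a Bruhat-monotone function of the counts of $x$, which gives order preservation. Conversely, every nontrivial $A_n$ count appears verbatim among the $B_n$ counts, which gives order reflection. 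With this replacement, Steps~1--4 give a complete, self-contained proof from the one-line description.

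Two small corrections to Step~3. The relevant statistic is $\ell_0(w)=w[-1,1]$; a ``$w[n,1]$-type'' count is constant. Also, only the total number of negative entries is monotone, not the number among the first $k$ positions: for example, $\overline{1}2<2\overline{1}$.

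\textbf{Comparison with the paper.} The paper avoids all this counting. By Stembridge's result, $\mathcal R(\operatorname{bot}(x))$ consists of the reduced words of $x$ with at most one $0$. The lexicographically maximal word $\mathbf r'(y)$ is a common reduced word of $y$ and $\operatorname{bot}(y)$, and it is a subword of $\mathbf r'(w'_0)$, which has a single $0$. The Subword Property in both groups then gives preservation, reflection, and the lower-interval property at once. Your repaired argument is longer but does not need Stembridge's reduced-word theorem.
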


\begin{corollary}\label{cor:intro An KL in Bn}
    The following statements hold:
    \begin{enumerate}[label=\textup{(\roman*)}]
        \item\label{cor:intro An KL in Bn-interval}
        For every \(x\in A_n\),
        \begin{equation*}
            [\mathrm{id},x]
            \cong
            [\mathrm{id},\operatorname{bot}(x)].
        \end{equation*}
        Thus every lower Bruhat interval in \(A_n\) is isomorphic as a poset
        to a lower Bruhat interval in \(B_n\).
        
        \item\label{cor:intro An KL in Bn-poincare}
        For every \(x\in A_n\),
        \begin{equation*}
            P_x(q)=P_{\operatorname{bot}(x)}(q).
        \end{equation*}
        In particular, every Poincar\'e polynomial in \(A_n\) occurs
        as a Poincar\'e polynomial in \(B_n\).

        \item\label{cor:intro An KL in Bn-KL}
        For every \(x,y\in A_n\) with \(x\leq y\),
        \begin{equation*}
            P_{x,y}(q)
            =
            P_{\operatorname{bot}(x),\operatorname{bot}(y)}(q).
        \end{equation*}
        In particular, every Kazhdan--Lusztig polynomial in \(A_n\)
        occurs in \(B_n\).
    \end{enumerate}
\end{corollary}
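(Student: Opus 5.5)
The whole corollary should follow from Theorem~\ref{thm: iso posets An to Bn intro}, combined with the fact that the bottom map preserves length and that Bruhat intervals are determined by their order structure. My plan is to first establish length preservation, i.e.\ \(\ell_B(\operatorname{bot}(x))=\ell_A(x)\) for all \(x\in A_n\). Because \(\operatorname{bot}\) is a poset embedding with image the lower interval \([\mathrm{id},b_0]\), and both \((A_n,\leq)\) and \([\mathrm{id},b_0]\) are graded posets with minimum \(\mathrm{id}\) and rank function given by length, an order isomorphism between them must preserve rank. Concretely, \(\operatorname{bot}(\mathrm{id})=\mathrm{id}\), since the identity is the unique minimum of each poset. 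Moreover, covering relations in \(A_n\) map to covering relations in \([\mathrm{id},b_0]\), and covers inside a lower interval are covers in \(B_n\). Hence \(\ell(\operatorname{bot}(x))\) equals the length of any maximal chain from \(\mathrm{id}\) to \(x\), which is \(\ell(x)\). As a consistency check, this also follows from \(\ell(b_0)=\ell(w_0^{A_n})=\binom{n+1}{2}\).

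For \ref{cor:intro An KL in Bn-interval}, I will take \(x\in A_n\). The embedding restricts to an order isomorphism \([\mathrm{id},x]\to\{v\in\operatorname{bot}(A_n): v\leq \operatorname{bot}(x)\}\). Since the image \([\mathrm{id},b_0]\) is a lower interval, every \(v\leq\operatorname{bot}(x)\) in \(B_n\) already lies in the image. So this set is exactly the full Bruhat interval \([\mathrm{id},\operatorname{bot}(x)]\) in \(B_n\). The key point is that the image being downward closed is what makes the isomorphism land on a full interval of \(B_n\), not just an induced subposet.

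For \ref{cor:intro An KL in Bn-poincare}, I will sum \(q^{\ell(u)}\) over \(u\leq x\) and transport the sum through the bijection of \ref{cor:intro An KL in Bn-interval}, using length preservation.

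For \ref{cor:intro An KL in Bn-KL}, the same reasoning shows that, for \(x\leq y\), the interval \([x,y]\) in \(A_n\) is isomorphic to \([\operatorname{bot}(x),\operatorname{bot}(y)]\) in \(B_n\). This interval lies inside the lower interval \([\mathrm{id},b_0]\), hence inside the image. I then need that the Kazhdan--Lusztig polynomial \(P_{x,y}\) depends only on the poset isomorphism type of the lower interval \([\mathrm{id},y]\), with \(x\) marked. This is the main obstacle, since combinatorial invariance is only known for lower intervals, not in general. The statement I will invoke is the theorem of du Cloux (and Brenti--Caselli--Marietti in full generality) that \(P_{u,v}\) for \(u\leq v\) is determined by the isomorphism class of the poset \([e,v]\) together with the position of \(u\). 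If that is unavailable, my fallback is the \(R\)-polynomial recursion: on lower intervals, the \(R\)-polynomials \(R_{u,v}\) can be computed from the poset \([e,v]\) via special matchings (Brenti--Caselli--Marietti). Applying this fallback to \([\mathrm{id},y]\cong[\mathrm{id},\operatorname{bot}(y)]\), the \(R\)-polynomials agree, and the standard KL recursion then gives \(P_{x,y}=P_{\operatorname{bot}(x),\operatorname{bot}(y)}\). The recursion also uses lengths, which agree by the first step.
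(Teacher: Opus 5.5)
Your proposal is correct and follows the paper's route. Parts (i) and (ii) come from Theorem~\ref{thm: iso posets An to Bn intro}, and you usefully make explicit two points the paper leaves implicit: downward-closedness of the image makes the restricted embedding land on the full interval \([\mathrm{id},\operatorname{bot}(x)]\), and rank, hence length, is preserved. Part (iii) follows from \([\mathrm{id},y]\cong[\mathrm{id},\operatorname{bot}(y)]\) together with the Brenti--Caselli--Marietti combinatorial invariance for lower intervals~\cite[Corollary~8.4]{BCM06}, exactly as in the paper. Your ``fallback'' via special matchings is the mechanism behind that same result, so it is not needed as a separate argument.
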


The first two assertions follow directly from Theorem~\ref{thm: iso posets An to Bn intro}.
The third assertion follows from the interval isomorphism in the first assertion, together with a result of Brenti, Caselli, and Marietti~\cite[Corollary~8.4]{BCM06}, which implies that the Kazhdan--Lusztig polynomials attached to corresponding subintervals of isomorphic lower Bruhat intervals agree.

\begin{example}[Palindromic Poincar\'e polynomials in \(B_3\)]\label{ex:palindromic-polynomials-B3}
    The type~\(B\) family is indexed by the following \(18\) tuples:
    \begin{equation*}
        \mathcal C_3
        =
        \left\{
        \begin{gathered}
            000,\ 001,\ 010,\ 011,\ 012,\\
            100,\ 101,\ 110,\ 111,\ 112,\ 120,\ 121,\
            130,\ 131,\ 132,\ 133,\ 134,\ 135
        \end{gathered}
        \right\}.
    \end{equation*}
    The following eight tuples index the type~\(A\) family:
    \begin{equation*}
        F_4^{\uparrow}
        =
        \{
            0000,\ 0001,\ 0011,\ 0012,\
            0111,\ 0112,\ 0122,\ 0123
        \}.
    \end{equation*}
    
    By Theorem~\ref{thm:main-intro}, the set of palindromic Poincar\'e
    polynomials in \(B_3\) is
    \(\Pi_q(\mathcal C_3\cup F_4^{\uparrow})\), whose \(13\) elements,
    ordered by degree, are
    \begin{equation*}
        \begin{gathered}
            1,\quad
            [2]_q,\quad
            [2]_q^2,\quad
            [2]_q[3]_q,\quad
            [2]_q^3,\quad
            [2]_q[4]_q,\quad
            [2]_q^2[3]_q,\\
            [2]_q^2[4]_q,\quad
            [2]_q[3]_q^2,\quad
            [2]_q[3]_q[4]_q,\quad
            [2]_q[4]_q^2,\quad
            [2]_q[4]_q[5]_q,\quad
            [2]_q[4]_q[6]_q.
        \end{gathered}
    \end{equation*}
    
    The restriction of \(\Pi_q\) to \(\mathcal C_3\) is not injective.
    For example,
    \begin{equation*}
        \Pi_q(101)
        =
        \Pi_q(110)
        =
        [2]_q^2
        =
        1+2q+q^2.
    \end{equation*}
    
    The type~\(A\) family contributes exactly one additional polynomial:
    \begin{equation*}
        \Pi_q(F_4^{\uparrow})
        \setminus
        \Pi_q(\mathcal C_3)
        =
        \bigl\{[2]_q[3]_q^2\bigr\}.
    \end{equation*}
    This polynomial is nevertheless realized in \(B_3\) through the bottom map.
    Indeed, for the unimodal permutation \(2310\in A_3\), we have
    \begin{equation*}
        (0,L_{A_3}(2310))=0122\in F_4^{\uparrow}
        \qquad\text{and}\qquad
        \operatorname{bot}(2310)=31\overline{2}\in B_3.
    \end{equation*}
    Since the bottom map preserves Poincar\'e polynomials,
    \begin{equation*}
        P_{31\overline{2}}(q)
        =
        P_{2310}(q)
        =
        \Pi_q(0122)
        =
        [2]_q[3]_q^2.
    \end{equation*}
    The tuple \(122\) would yield the same \(q\)-integer product, but \(122\notin\mathcal C_3\).
\end{example}

\subsection{Perspectives}

The results and techniques developed in this paper suggest three directions
for further research.

\begin{enumerate}
    \item \emph{Transport through the bottom map.}
    Beyond its role in our classification, the bottom map provides a
    mechanism for transporting further results from \(A_n\) to \(B_n\).
    Since it preserves Poincar\'e polynomials,
    \begin{equation*}
        \operatorname{bot}\bigl(\mathrm{RSm}(A_n)\bigr)
        =
        \operatorname{bot}(A_n)
        \cap
        \mathrm{RSm}(B_n).
    \end{equation*}
    For \(n\geq2\), the image \(\operatorname{bot}(A_n)\) is not a subgroup of \(B_n\), although it properly contains the standard parabolic subgroup \(S_n\subseteq B_n\) and retains the full Bruhat-order structure of \(A_n\).
    It can therefore encode information that is not visible through parabolic restriction alone.
    
    One concrete direction is to translate pattern avoidance in \(A_n\) into signed pattern avoidance in \(B_n\).
    Combined with the preservation of lower Bruhat intervals and Kazhdan--Lusztig polynomials established in Corollary~\ref{cor:intro An KL in Bn}, such a translation may yield type~\(B\) analogues of type~\(A\) results on smoothness, singularities, and Kazhdan--Lusztig polynomials.

    \item \emph{Palindromic Poincar\'e polynomials in \(D_n\).}
    As in type~\(B\), the palindromic Poincar\'e polynomials in \(D_n\)
    cannot, in general, be obtained from the principal elements of a single
    Lehmer code. Indeed, an exhaustive SageMath computation shows that,
    for every Lehmer code \(L\) for \(D_5\), one has
    \begin{equation*}
        \left|\{P_w(q):w\in\mathrm{Pr}(L)\}\right|
        <
        \left|\{P_w(q):w\in\mathrm{RSm}(D_5)\}\right|.
    \end{equation*}
    
    In \(B_n\), we overcome this obstruction by using the bottom map: the missing polynomials are transported from \(A_n\) through a lower-interval embedding
    \begin{equation*}
        (A_n,\leq)\hookrightarrow(B_n,\leq).
    \end{equation*}
    The same mechanism is unavailable in the ranks examined in type~\(D\).
    Indeed, the computations in Remark~\ref{rem:other-types-search} show that, for \(4\leq n\leq6\), no lower Bruhat interval in \(D_n\) is isomorphic to \((A_n,\leq)\).
    Thus, a classification in type~\(D\) requires a different method to account for the polynomials not produced by the principal elements of a chosen Lehmer code.

    A natural first step is to understand the principal part of such a classification.
    Lakshmibai gives an explicit factorization of the elements of \(D_n\), together with a condition \((K)\) defining the corresponding Kempf elements
    \cite[Proposition~D.1 and p.~339]{Lak76}.
    It would be interesting to determine whether these Kempf elements are precisely the principal elements of the Lehmer code \(L_{D_n}\) introduced in~\cite[Section~5.4]{BS25}, or whether they are the principal elements of a different Lehmer code for \(D_n\).
    Translating condition~\((K)\) into the corresponding code coordinates may yield a type~\(D\) analogue of the rbg description obtained here for \(B_n\).
    
    Comparing reduced words across different Coxeter groups may help account for the remaining polynomials.
    Stembridge studied relations among reduced words in types \(A\), \(B\), and \(D\)~\cite{Ste97}, from which the bottom map arises.
    
    \item \emph{Enumeration of palindromic Poincar\'e polynomials in \(B_n\).}
    Theorem~\ref{thm:main-intro} reduces the enumeration problem to understanding the map
    \begin{equation*}
        \Pi_q\colon
        \mathcal C_n\cup F_{n+1}^{\uparrow}
        \longrightarrow
        \mathbb Z_{\geq 0}[q].
    \end{equation*}
    The restriction of \(\Pi_q\) to \(F_{n+1}^{\uparrow}\) is injective, so
    \(F_{n+1}^{\uparrow}\) gives a nonredundant parametrization of the type~\(A\) family.
    In contrast, its restriction to \(\mathcal C_n\) is not injective, and the images of the two restrictions overlap.
    Understanding the fibers of \(\Pi_q\) on \(\mathcal C_n\) and the intersection
    \begin{equation*}
        \Pi_q(\mathcal C_n)
        \cap
        \Pi_q(F_{n+1}^{\uparrow})
    \end{equation*}
    may lead to a direct enumeration of the distinct palindromic
    Poincar\'e polynomials in \(B_n\).
\end{enumerate}
\subsection*{Acknowledgments}

This paper was made possible by many insightful discussions with Paolo Sentinelli, to whom I am deeply grateful.
I also thank him for his comments on an earlier version, which substantially improved the paper.
I thank Shixuan Zeng for suggesting the argument showing that avoidance of the eight patterns in~\eqref{eq: 8 bad patterns} implies the rbg condition.
I also thank Davide Bolognini for locating a copy of Lakshmibai's paper~\cite{Lak76}.
\section{Canonical words, Lehmer codes, and the bottom map}\label{section: canonical words}

We follow Stembridge's treatment of canonical words in~\(A_n\) and~\(B_n\) and of the bottom map~\cite{Ste97}, using right cosets in place of left cosets.
We also collect several other external results.

Let \(I\) be a finite index set of size \(r\) and let \(I^*\) be the free monoid on \(I\), i.e., the set of all words in the alphabet \(I\).  

Let \((W,S)\) be a rank \(r\) Coxeter system with generating set of simple reflections $S=\{s_i : i\in I\}$ and Bruhat order~$\leq$.
For a word \(\mathbf{i}=i_1i_2\cdots i_k\in I^*\), we define its
\emph{evaluation in \(W\)} by
\begin{equation*}
    \mathbf{s}(\mathbf{i})
    :=
    s_{i_1}s_{i_2}\cdots s_{i_k}\in W.
\end{equation*}
For \(i,j\in I\), let
\(m(i,j)\in\mathbb Z_{\geq1}\cup\{\infty\}\) denote the order of
\(s_is_j\), so that \(m(i,j)=1\) if and only if \(i=j\).
The matrix
\begin{equation*}
    (m(i,j))_{i,j\in I}
\end{equation*}
is the \emph{Coxeter matrix of \((W,S)\)}.

We now record the definitions of order on tuples, Lehmer codes, and principal elements.
\begin{definition}\label{def:tuple_order}
    We equip \(\mathbb Z_{\geq 0}^r\) with the componentwise order
    \(\preceq\), defined by
    \begin{equation*}
        \mathbf{x}=(x_1,\ldots,x_r)
        \preceq
        \mathbf{y}=(y_1,\ldots,y_r)
        \quad\Longleftrightarrow\quad
        x_i\leq y_i\text{ for all }1\leq i\leq r.
    \end{equation*}
\end{definition}

\begin{definition}\label{def:Lehmer_code and principal elements}
    Suppose that \(W\) is finite, with exponents
    \(e_1,\ldots,e_r\). A bijection
    \begin{equation*}
        L\colon W\longrightarrow
        \prod_{i=1}^{r}\{0,1,\ldots,e_i\}
    \end{equation*}
    is a \emph{Lehmer code for \(W\)} if its inverse is order-preserving;
    that is, for every
    \(\mathbf{x},\mathbf{y}\in
    \prod_{i=1}^{r}\{0,1,\ldots,e_i\}\), we have
    \begin{equation*}
        \mathbf{x}\preceq\mathbf{y}
        \quad\Longrightarrow\quad
        L^{-1}(\mathbf{x})\leq L^{-1}(\mathbf{y}).
    \end{equation*}

    We say that \(w\in W\) is \emph{\(L\)-principal} if, for every \(u\in W\),
    \begin{equation*}
        u\leq w
        \quad\Longrightarrow\quad
        L(u)\preceq L(w).
    \end{equation*}
    We denote by \(\mathrm{Pr}(L)\) the set of \(L\)-principal elements.
\end{definition}

We recall the terminology for reduced words needed to define canonical words.
For an element $w \in W$, its \emph{length $\ell(w)$} is the smallest nonnegative integer $k$ such that $w$ can be written as a product of~$k$ generators from $S$.

\begin{definition}\label{def:set of reduced words of w and W}
    We denote by \(\mathcal{R}(w)\) the set of \emph{reduced words
    of~\(w\)}, that is, the set of words
    \(\mathbf{i}=i_1i_2\cdots i_k\in I^*\) such that
    \[
        w=s_{i_1}s_{i_2}\cdots s_{i_k}
        \qquad\text{and}\qquad
        k=\ell(w).
    \]
    The set of \emph{reduced words in~\(W\)} is
    \[
        \mathcal{R}(W):=\bigcup_{w\in W}\mathcal{R}(w).
    \]
\end{definition}

\begin{definition}\label{def:subwords substrings and prefixes}
    A nonempty \emph{subword} of a word
    \(\mathbf{i}=i_1i_2\cdots i_k\in I^*\) is a word
    \(\mathbf{j}=i_{j_1}i_{j_2}\cdots i_{j_q}\in I^*\) obtained by
    choosing indices
    \[
        1\leq j_1<j_2<\cdots<j_q\leq k.
    \]
    If \(j_1,j_2,\ldots,j_q\) are consecutive integers, we call
    \(\mathbf{j}\) a \emph{substring} of~\(\mathbf{i}\). Such a
    substring is a \emph{prefix} of~\(\mathbf{i}\) if \(j_1=1\).
    We also regard the empty word as a subword, substring, and prefix
    of every word.
\end{definition}

\begin{lemma}[Subword Property {\cite[Theorem~2.2.2]{BB05}}]
\label{lem:Subword Property}
    Let \(u,w\in W\), and let \(\mathbf{i}\) be a reduced word of~\(w\).
    Then
    \begin{equation*}
        u\leq w
        \quad\Longleftrightarrow\quad
        \text{there exists a subword \(\mathbf{j}\) of~\(\mathbf{i}\)
        such that \(\mathbf{j}\in\mathcal R(u)\)}.
    \end{equation*}
\end{lemma}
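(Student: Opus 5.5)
The plan is to work directly from the definition of Bruhat order as the transitive closure of the relations \(x<xt\), where \(t\) is a reflection and \(\ell(x)<\ell(xt)\). The two tools are the Strong Exchange Condition and the Deletion Condition for Coxeter groups. The two implications are proved separately.

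For \((\Rightarrow)\), suppose \(u\leq w\) and fix a chain \(u=x_0<x_1<\cdots<x_m=w\), where each step is \(x_{j-1}=x_jt_j\) with \(t_j\) a reflection and \(\ell(x_{j-1})<\ell(x_j)\). I would induct on \(m\), so it suffices to treat one step \(u=wt\) with \(\ell(u)<\ell(w)\). Since \(\ell(wt)<\ell(w)\) and \(\mathbf i=i_1\cdots i_k\) is reduced, the Strong Exchange Condition gives an index \(p\) with
\[
wt=s_{i_1}\cdots\widehat{s_{i_p}}\cdots s_{i_k}.
\]
This word may fail to be reduced. In that case the Deletion Condition lets me repeatedly delete pairs of letters until a reduced word of \(u\) remains, and that word is still a subword of \(\mathbf i\). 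For the induction, note that a subword of a subword of \(\mathbf i\) is again a subword of \(\mathbf i\). Applying the one-step argument along the chain from \(w\) down to \(u\), each time to the reduced subword produced at the previous stage, therefore gives a reduced word of \(u\) that is a subword of \(\mathbf i\).

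For \((\Leftarrow)\), I would induct on \(k=\ell(w)\). Write \(w'=s_{i_1}\cdots s_{i_{k-1}}\) and \(s=s_{i_k}\); then \(w=w's\) with \(w'<w\). Let \(\mathbf j\) be a reduced subword of \(\mathbf i\) representing \(u\).
\begin{enumerate}
\item If \(\mathbf j\) avoids the last letter, then \(\mathbf j\) is a subword of the reduced word \(i_1\cdots i_{k-1}\) of \(w'\). By induction \(u\leq w'\), and hence \(u\leq w\).
\item Otherwise \(u=u's\), where \(u'\) has a reduced word that is a subword of \(i_1\cdots i_{k-1}\) (namely \(\mathbf j\) minus its last letter). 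By induction \(u'\leq w'\), and since \(\mathbf j\) is reduced, \(u'<u's\). It then remains to show \(u's\leq w's\).
\end{enumerate}

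The main obstacle is this final monotonicity step: if \(x\leq y\), \(ys>y\) and \(xs>x\), then \(xs\leq ys\). I would first prove the following lifting-type statement for a single covering relation. Let \(x<xt=y\) with \(ys>y\). If \(xs=y\), we are done, since \(x\le y<ys\). Otherwise \(xs\) and \(ys\) differ by right multiplication by the reflection \(sts\), so they are comparable. A length and parity argument, using that \(ys\) lies above both \(y\) and \(xs\), forces \(xs<ys\). Then I would extend to arbitrary \(x\leq y\) by induction along a chain. At each intermediate step where \(zs<z\), I would use the same lemma with the roles of the elements suitably exchanged. I expect this case analysis on the descent status of \(s\) at each element of the chain to be the only delicate bookkeeping in the proof.
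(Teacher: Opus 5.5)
\textbf{Verdict: genuine gap.} The paper does not prove this lemma; it quotes \cite[Theorem~2.2.2]{BB05}, and your outline follows that standard argument. Your $(\Rightarrow)$ direction (Strong Exchange, then Deletion, iterated down a chain) is correct. Your reduction of $(\Leftarrow)$ to the statement ``$x\le y$, $xs>x$, $ys>y$ imply $xs\le ys$'' is also correct. The gap is in how you propose to prove that statement, exactly at the step you call bookkeeping.

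Length comparisons handle two kinds of chain steps $z\to z'=zt$.
\begin{itemize}
\item If $z's>z'$, then $\ell(zs)\le\ell(z)+1\le\ell(z')<\ell(z's)$, so $zs<z's$. This inequality, not ``$ys$ lies above $xs$'' (which is the conclusion), is the right justification of your single-step lemma. Your case $xs=y$ is vacuous: it forces $t=s$, and then $ys=x<y$.
\item If $zs<z$, then $\ell(zs)<\ell(z')-1\le\ell(z's)$, so again $zs<z's$.
\end{itemize}

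The remaining case $zs>z$, $z's<z'$ cannot be reached this way. Suppose $\ell(z')=\ell(z)+1$ and $t\neq s$. Then lengths only give $z's<zs$, with $\ell(zs)=\ell(z')$ and $zs\neq z'$. Consider the four relations $z<z'$, $z<zs$, $z's<z'$, $z's<zs$. In each one the lower element has $s$ as an ascent and the upper element has $s$ as a descent. So no exchange of roles makes your ascent lemma (or the trivial descent case) applicable. Ruling this configuration out, i.e.\ proving $zs\le z'$, is the codimension-one case of the Lifting Property, and it needs a genuine exchange argument.

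The fix is to apply the Strong Exchange Condition to a reduced word $s_1\cdots s_{q-1}s$ of $z'$. It gives $z=s_1\cdots\widehat{s_p}\cdots s_{q-1}s$ for some $p$.
\begin{itemize}
\item If $p=q$, then $zs=z'$.
\item Otherwise $zs$ is the reduced word $s_1\cdots s_{q-1}$ of $z's$ with one letter deleted. Hence $zs=(z's)t'$ for a reflection $t'$, with $\ell(zs)\le q-2<\ell(z's)$, so $zs<z's$.
\end{itemize}
Together with the two length cases, every chain step $z\to z'$ satisfies $zs\le\max\{z',z's\}$, where $\max$ denotes the larger of the two comparable elements. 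Since also $z\le z'$, induction along a chain from $x$ to $y$ gives $xs\le\max\{y,ys\}$ whenever $x\le y$. When $ys>y$ this is exactly the inequality $u's\le w's$ you need in case~2. So Strong Exchange, which you list as a tool but use only in $(\Rightarrow)$, is indispensable in $(\Leftarrow)$ as well.
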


We fix a total order on \(I\), so that $I^*$ inherits the lexicographic order.
This induces a maximal chain 
\begin{equation*}
    I_0\subset I_1\subset \cdots \subset I_{r}
\end{equation*}
of subsets of~\(I\), where \(I_0:=\emptyset\) and, for \(1\leq k\leq r\), \(i_k\) is the
minimal element of \(I\setminus I_{k-1}\) and
\[
    I_k:=I_{k-1}\cup\{i_k\}.
\]
For \(0\leq k\leq r\), let
\begin{equation*}
    W_k:=\langle s_i:i\in I_k\rangle.
\end{equation*}
Thus, \(W_0=\{\mathrm{id}\}\) and \(W_r=W\).

\begin{definition}\label{def:canonical-decomposition}
    For \(w\in W\), the \emph{canonical decomposition of \(w\)} is the
    unique factorization
    \begin{equation*}
        w=w^{(1)}w^{(2)}\cdots w^{(r)},
    \end{equation*}
    where, for \(1\leq k\leq r\), the \emph{\(k\)-th canonical factor}
    \(w^{(k)}\) is a minimal representative of a right coset in
    \(W_{k-1}\backslash W_k\). The existence and uniqueness of this
    factorization follow from~\cite[Corollary~2.4.6]{BB05}. Moreover,
    \begin{equation*}
        \ell(w)
        =
        \ell\bigl(w^{(1)}\bigr)
        +\ell\bigl(w^{(2)}\bigr)
        +\cdots
        +\ell\bigl(w^{(r)}\bigr).
    \end{equation*}
\end{definition}

\begin{definition}\label{def:canonical-words}
    Let
    \begin{equation*}
        w=w^{(1)}w^{(2)}\cdots w^{(r)}
    \end{equation*}
    be the canonical decomposition of \(w\in W\). The
    \emph{canonical word of \(w\)} is
    \begin{equation*}
        \mathbf c(w)
        :=
        \mathbf i_1\mathbf i_2\cdots\mathbf i_r,
    \end{equation*}
    where, for \(1\leq k\leq r\), \(\mathbf i_k\) is the lexicographically minimal element of \(\mathcal R(w^{(k)})\).
\end{definition}

For the subgroup chains used below in types \(A\) and \(B\), every minimal coset representative has a unique reduced word. Since every canonical factor is such a representative, lexicographic minimality is not needed to define canonical words.
For the subgroup chain used by Stembridge in type \(D\), some minimal coset representatives have two reduced words~\cite[Part~II]{Ste97}.

To compare reduced words in~\(A_n\) and~\(B_n\), we introduce a second Coxeter system \((W',S')\) also indexed by \(I\), with simple generators $S'=\{s'_i:i\in I\}$, Coxeter matrix \((m'(i,j))_{i,j\in I}\), and Bruhat order~\(\leq\).

For a word \(\mathbf{i}=i_1i_2\cdots i_k\in I^*\), we define its
\emph{evaluation in \(W'\)} by
\begin{equation*}
    \mathbf{s}'(\mathbf{i})
    :=
    s'_{i_1}s'_{i_2}\cdots s'_{i_k}
    \in W'.
\end{equation*}
For \(w\in W'\), we denote its canonical word by
\(\mathbf{c}'(w)\in I^*\).
We use the notation \(\mathcal R(w)\) for the set of reduced words of \(w\) in either Coxeter system, with the ambient group understood from context.
We denote by \(\mathcal R(W')\subseteq I^*\) the set of words reduced in \(W'\).
\begin{definition}\label{def:dominance between Coxeter systems}
    Following~\cite[Section~1.5]{Ste97}, we say that \(W\)
    \emph{dominates} \(W'\) if \(m(i,j)\geq m'(i,j)\) for all \(i,j\in I\).
\end{definition}

We now specialize to the classical Weyl groups \(W'=A_n\) and \(W=B_n\),
which are both of rank \(n\) (so \(r=n\)) and have the common index set
\begin{equation*}
    I=\{0,1,\ldots,n-1\}.
\end{equation*}
Their Coxeter presentations, recalled below, show that \(B_n\) dominates
\(A_n\).
We use this relation below to compare reduced words in \(A_n\) and
\(B_n\), and in the next section to prove
Theorem~\ref{thm: iso posets An to Bn intro}.

We equip \(I\) with its natural order, which determines canonical words \(\mathbf c'\) and \(\mathbf c\) for \(A_n\) and \(B_n\), respectively.
Up to reversing the indexing of the simple generators, the canonical decompositions in types \(A\) and \(B\) appear in~\cite[Propositions~A.1 and~B.1]{Lak76}, respectively.
The maps obtained by recording the lengths of their canonical factors are referred to as \emph{codes} in~\cite[Sections~3.1 and~4.1]{GK97} for types \(A\) and \(B\), respectively, up to our indexing convention in type~\(A\).
We refer to these maps as the canonical Lehmer codes \(L_{A_n}\) and \(L_{B_n}\); see Definitions~\ref{def:lehmer code for An LAn} and~\ref{def:lehmer code for Bn LBn}.

We now introduce notation for the consecutive strings that occur in these canonical factors.
\begin{definition}\label{def:consecutive_word_strings}
    For $i,j\in I$ with $0\leq i\le j$, we define
    \begin{itemize}
        \item $\mathbf{i}_{[i,j]}$ as the word obtained by the increasing sequence from \(i\) to \(j\), that is,
        \begin{equation*}
            \mathbf{i}_{[i,j]} = i\cdot(i+1)\cdots (j-1)\cdot j \in I^*.
        \end{equation*}
        \item $\mathbf{i}_{[-j,-i]}$ as the word obtained by the decreasing sequence from $j$ to \(i\), that is,
        \begin{equation*}
            \mathbf{i}_{[-j,-i]} = j\cdot(j-1)\cdots (i+1)\cdot i \in I^*.
        \end{equation*}
    \end{itemize}
    For integers \(a,b\) such that \(a>b\), we define \(\mathbf{i}_{[a,b]}\) to be the empty word.
\end{definition}
\begin{definition}
    For \(i,j\in I\), we define \(\mathbf{i}_{[-i,j]}\) to be the word
    obtained by concatenating the decreasing sequence from \(i\) to \(0\)
    with the increasing sequence from \(1\) to \(j\), that is,
    \begin{equation*}
        \mathbf{i}_{[-i,j]} = i\cdot(i-1)\cdots 1\cdot 0\cdot 1\cdots (j-1)\cdot j \in I^*.
    \end{equation*}
\end{definition}
For integers \(a,b,c\) satisfying
\begin{equation*}
    -(n-1)\leq a\leq b\leq c\leq n-1,
\end{equation*}
we have the identity
\begin{equation*}
    \mathbf{i}_{[a,c]}
    =
    \mathbf{i}_{[a,b]}\mathbf{i}_{[b+1,c]}.
\end{equation*}

\subsection{The symmetric group \texorpdfstring{$A_{n}$}{An}}
The rank-\(n\) Coxeter group \(W'=A_n\) has simple generators
\begin{equation*}
    S'=\{s'_i:0\leq i\leq n-1\}
\end{equation*}
and Coxeter matrix determined, for distinct \(i,j\in I\), by
\begin{equation*}
    m'(i,j)
    =
    \begin{cases}
        3,
            &\lvert i-j\rvert=1,\\
        2,
            &\lvert i-j\rvert\geq2.
    \end{cases}
\end{equation*}

We realize \(A_n\) as the group of permutations of
\begin{equation*}
    \{0,1,\ldots,n\},
\end{equation*}
and write \(w\in A_n\) in one-line notation as
\begin{equation*}
    w=w_0w_1\ldots w_n.
\end{equation*}
For \(0\leq i\leq n-1\), right multiplication by \(s'_i\)
interchanges the entries \(w_i\) and \(w_{i+1}\).

Before defining the canonical Lehmer code, we analyze the structure of canonical words.
For \(1\leq k\leq n\), we have
\begin{equation*}
    I_k=\{0,1,\ldots,k-1\}.
\end{equation*}
The corresponding standard parabolic subgroup \(W'_k\) permutes
\(\{0,1,\ldots,k\}\) and fixes \(\{k+1,\ldots,n\}\). In particular, \(W'_k\) is a Coxeter group of type \(A_k\).
For \(k=n\), we recover \(I_n=I\) and \(W'_n=W'\).

The set of minimal representatives of the right cosets \(W'_{k-1}\backslash W'_k\) is
\begin{equation*}
    {}^{\langle k-1\rangle}W'
    :=
    \{
        \mathrm{id},
        s'_{k-1},
        s'_{k-1}s'_{k-2},
        \ldots,
        s'_{k-1}s'_{k-2}\cdots s'_0
    \}.
\end{equation*}
Thus, \(\lvert{}^{\langle k-1 \rangle}W'\rvert=k+1\).
Each element of \({}^{\langle k-1\rangle}W'\) has a unique reduced word, and these reduced words are precisely the prefixes of \(\mathbf i_{[-(k-1),0]}\), with the identity corresponding to the empty prefix.

Let \(w'_0\) be the longest element of~\(A_n\). By the preceding
description of the minimal coset representatives, for every \(w\in A_n\),
the canonical word \(\mathbf c'(w)\) is a subword of
\begin{equation*}
    \mathbf c'(w'_0)
    =
    \mathbf i_{[0,0]}\cdot
    \mathbf i_{[-1,0]}\cdots
    \mathbf i_{[-(n-1),0]},
\end{equation*}
and has the following form for some \(q\geq 0\), with the product
understood to be empty when \(q=0\):
\begin{equation*}
    \mathbf{c}'(w)
    =
    \mathbf{i}_{[n_1,m_1]}
    \mathbf{i}_{[n_2,m_2]}
    \cdots
    \mathbf{i}_{[n_q,m_q]}.
\end{equation*}
If \(q\geq1\), the indices satisfy:
\begin{itemize}
    \item \(0\geq n_1>n_2>\cdots>n_q\geq-(n-1)\);
    \item \(0\geq m_j\geq n_j\) for every \(1\leq j\leq q\).
\end{itemize}
For each \(1\leq j\leq q\), the word \(\mathbf{i}_{[n_j,m_j]}\) is the reduced word of the canonical factor \(w^{(\lvert n_j\rvert+1)}\).
Consequently, for \(1\leq j\leq q\),
\begin{equation*}
    \ell\bigl(w^{(\lvert n_j\rvert+1)}\bigr)
    =
    \lvert n_j\rvert+1+m_j.
\end{equation*}

We recall the following result of Edelman~\cite[Theorem~2.3]{Ede95}.
\begin{proposition}\label{prop:canonical_is_lex_min}
    For every \(w\in A_n\), the canonical word \(\mathbf{c}'(w)\) is the
    lexicographically minimal element of \(\mathcal{R}(w)\).
\end{proposition}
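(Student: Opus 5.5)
The plan is to argue by induction on \(\ell(w)\), using the greedy description of lexicographically minimal reduced words. The lexicographically minimal element of \(\mathcal R(w)\) is built as follows: take as first letter the minimal \(b\in I\) with \(\ell(s'_bw)<\ell(w)\), that is, the minimal left descent; then append the lexicographically minimal reduced word of \(s'_bw\). This holds because every reduced word of \(w\) starts with a left descent, and after fixing the first letter \(b\), the rest is an arbitrary reduced word of \(s'_bw\). So it suffices to prove two facts for \(w\neq\mathrm{id}\), with \(a\) the first letter of \(\mathbf c'(w)\):
\begin{enumerate}
\item[(a)] \(a\) is the minimal left descent of \(w\);
\item[(b)] deleting the first letter of \(\mathbf c'(w)\) gives \(\mathbf c'(s'_aw)\).
\end{enumerate}
The induction hypothesis applied to \(s'_aw\) then finishes the proof.

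For (b), write \(\mathbf c'(w)=\mathbf i_{[n_1,m_1]}\cdots\mathbf i_{[n_q,m_q]}\) as in the displayed normal form, so \(a=\lvert n_1\rvert\). Deleting \(a\) turns the first block into \(\mathbf i_{[n_1+1,m_1]}\), which is either empty or a prefix of \(\mathbf i_{[-(a-1),0]}\). Since \(n_1>n_2>\cdots\), no block of \(\mathbf c'(w)\) occupies position \(a\), that is, the factor \(w^{(a)}\) is trivial. Hence the shortened word again has the canonical shape, now with \(k\)-th factor given by \(\mathbf i_{[n_1+1,m_1]}\) at \(k=a\). Next I would note that every word of this shape is the canonical word of its evaluation. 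Indeed, each choice of prefixes determines a product of minimal coset representatives \(w^{(1)}\cdots w^{(n)}\), and by the uniqueness in Definition~\ref{def:canonical-decomposition} this product is the canonical decomposition of its value, with lengths adding. Applying this to the shortened word shows that it is reduced, evaluates to \(s'_aw\), and equals \(\mathbf c'(s'_aw)\).

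For (a), \(s'_a\) is clearly a left descent, since \(\mathbf c'(w)\) is reduced and begins with \(a\). Now take \(b<a\); I must show \(\ell(s'_bw)>\ell(w)\). The factors \(w^{(1)},\ldots,w^{(a)}\) are all trivial, because blocks only occur at positions \(\lvert n_j\rvert+1\geq a+1\). Consider the parabolic factorization \(w=xv\) with \(x\in W'_a\) and \(v\) the minimal representative of the right coset \(W'_aw\). Concatenating the canonical decompositions of \(x\) in \(W'_a\) and of \(v\) gives the canonical decomposition of \(w\); I would verify this via \cite[Corollary~2.4.6]{BB05} applied along the chain. Hence the first \(a\) canonical factors of \(w\) are those of \(x\), so \(x=\mathrm{id}\). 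Thus \(w\) is minimal in \(W'_aw\), and so it has no left descent in \(I_a=\{0,\ldots,a-1\}\), which covers every \(b<a\).

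I expect the main obstacle to be the compatibility claim in (a): that the truncation \(w^{(1)}\cdots w^{(a)}\) of the canonical decomposition equals the \(W'_a\)-component of \(w\) in the parabolic factorization. This requires checking that \(w^{(a+1)}\cdots w^{(n)}\) is a minimal representative of its \(W'_a\)-coset, which follows inductively from each \(w^{(k)}\) being minimal in \(W'_{k-1}\backslash W'_k\) together with length additivity. Once this is in place, the rest is bookkeeping with the block form of \(\mathbf c'(w)\).
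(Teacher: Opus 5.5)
Your proof is correct, but it does not follow the paper. The paper gives no argument for this proposition and simply cites Edelman~\cite[Theorem~2.3]{Ede95}. You instead derive it directly from the greedy description of the lexicographically minimal reduced word: take the minimal left descent $b$, then recurse on $s'_bw$. This uses only facts the paper already records, namely existence, uniqueness and length-additivity of the canonical decomposition, together with the explicit type~$A$ coset representatives.

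The engine is the observation you make in (b). Any concatenation, over $k=1,\ldots,n$ in order, of prefixes of $\mathbf i_{[-(k-1),0]}$ is the canonical word of its evaluation, and hence reduced.

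That same observation removes what you call the main obstacle in (a). For $b<a$, the word $b\,\mathbf c'(w)$ already has this shape: it is a one-letter block at position $b+1\leq a$, ahead of all blocks of $\mathbf c'(w)$, which sit at positions $\geq a+1$. So $\ell(s'_bw)=\ell(w)+1$ follows immediately, without the parabolic factorization $w=xv$. Your inductive justification of that factorization is also fine.

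Citing Edelman keeps the paper short. Your route is self-contained and shows exactly where type~$A$ enters: in (b), deleting the leading letter $k-1$ of a block $(k-1)(k-2)\cdots$ leaves a legitimate block for position $k-1$.
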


With the left coset convention of~\cite[Section~1.3]{Ste97}, the canonical word of \(w\in A_n\) is the
lexicographically minimal element of \(\mathcal R(w)\) when words are
read from right to left.

\begin{definition}\label{def:lehmer code for An LAn}
    The \emph{canonical Lehmer code for~\(A_n\)} is the map
    \begin{gather*}
        L_{A_n}\colon A_n\longrightarrow
        \prod_{i=1}^{n}\{0,1,\ldots,i\},\\
        w\longmapsto
        \bigl(
            \ell(w^{(1)}),\ell(w^{(2)}),\ldots,\ell(w^{(n)})
        \bigr).
    \end{gather*}
\end{definition}
By~\cite[Section~5.2]{BS25}, \(L_{A_n}\) is a Lehmer code for \(A_n\).

Let $\mathrm{swap}$ be the involution of~$I^*$ sending \(i\) to $n-1-i$, that is, 
\begin{equation*}
    \mathrm{swap}(i_1i_2\cdots i_q)=j_1j_2\cdots j_q, \text{where $j_k=n-1-i_k$}.
\end{equation*}
This defines the automorphism $\sigma(w)= w'_0 w w'_0$ of~$A_n$.
Equivalently, $\sigma(w)=\mathbf{s}'(\mathrm{swap}(\mathbf{c}'(w)))$.

We end this subsection by defining a second distinguished reduced word \(\mathbf r'(w)\) for each \(w\in A_n\).
We use this word in the next section to relate the bottom map to
Bruhat order.

\begin{definition}\label{def:my_reduced_word_in_An}
    For \(w\in A_n\), define the word $\mathbf{r}'(w)$ by
    \begin{equation*}
        \mathbf{r}'(w)=\mathrm{swap}(\mathbf{c}'(\sigma(w))).
    \end{equation*}
\end{definition}

For example,
\[
    \mathbf{r}'(w'_0)= \mathbf{i}_{[n-1,n-1]} \cdot \mathbf{i}_{[n-2,n-1]} \cdots \mathbf{i}_{[0,n-1]}.
\]

The preceding description and
Proposition~\ref{prop:canonical_is_lex_min} give the following.
\begin{corollary}\label{cor:my_word_is_lex_max}
    For every \(w\in A_n\), the word \(\mathbf r'(w)\) is the
    lexicographically maximal element of \(\mathcal R(w)\). Moreover,
    \(\mathbf r'(w)\) is a subword of \(\mathbf r'(w'_0)\).
\end{corollary}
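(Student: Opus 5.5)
The plan is to transport Proposition~\ref{prop:canonical_is_lex_min} through the involution \(\mathrm{swap}\), using three facts about it: how it interacts with evaluation, with lexicographic order, and with subwords.

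\emph{Evaluation.} Since \(\sigma(w)=w'_0ww'_0\) is conjugation by the longest element of \(A_n\), it is the diagram automorphism with \(\sigma(s'_i)=s'_{n-1-i}\). Consequently, for every word \(\mathbf i\in I^*\),
\begin{equation*}
    \mathbf s'(\mathrm{swap}(\mathbf i))=\sigma(\mathbf s'(\mathbf i)).
\end{equation*}
Since \(\mathrm{swap}\) preserves word length and \(\sigma\) is an involution preserving length, \(\mathrm{swap}\) restricts to a bijection \(\mathcal R(\sigma(w))\to\mathcal R(w)\), with inverse again \(\mathrm{swap}\). In particular, \(\mathbf r'(w)=\mathrm{swap}(\mathbf c'(\sigma(w)))\) is a reduced word of \(\sigma(\sigma(w))=w\).

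\emph{Lexicographic order.} The map \(i\mapsto n-1-i\) is order-reversing on \(I\). All elements of \(\mathcal R(w)\) have the same length \(\ell(w)\). For two distinct words of equal length, the lexicographic comparison is decided by the first position where they differ. Hence \(\mathrm{swap}\) reverses the lexicographic order on words of a fixed length. By Proposition~\ref{prop:canonical_is_lex_min}, \(\mathbf c'(\sigma(w))\) is the lexicographic minimum of \(\mathcal R(\sigma(w))\). Applying the order-reversing bijection \(\mathrm{swap}\colon\mathcal R(\sigma(w))\to\mathcal R(w)\), its image \(\mathbf r'(w)\) is the lexicographic maximum of \(\mathcal R(w)\).

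\emph{Subwords.} Since \(\mathrm{swap}\) acts letter by letter, it sends a subword of \(\mathbf i\) to a subword of \(\mathrm{swap}(\mathbf i)\), at the same positions. By the description of canonical words preceding Proposition~\ref{prop:canonical_is_lex_min}, \(\mathbf c'(\sigma(w))\) is a subword of \(\mathbf c'(w'_0)\). Applying \(\mathrm{swap}\) shows that \(\mathbf r'(w)\) is a subword of \(\mathrm{swap}(\mathbf c'(w'_0))\). Finally, \(\sigma(w'_0)=w'_0\), so \(\mathrm{swap}(\mathbf c'(w'_0))=\mathbf r'(w'_0)\), which gives the second assertion.

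No step is a genuine obstacle. The only point needing care is that lexicographic order is reversed by \(\mathrm{swap}\) only on words of equal length. This is exactly why the comparison is restricted to reduced words of a single element, all of which have length \(\ell(w)\).
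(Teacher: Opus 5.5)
Your proof is correct and follows the argument the paper has in mind. The paper gives no written proof; it derives the corollary from Proposition~\ref{prop:canonical_is_lex_min} and the preceding description (that \(\mathbf c'(w)\) is a subword of \(\mathbf c'(w'_0)\) and that \(\sigma\) is realized by \(\mathrm{swap}\)), and you carry this out by transporting both facts through the letter-wise, order-reversing involution \(\mathrm{swap}\) and using \(\sigma(w'_0)=w'_0\), including the detail that \(\mathrm{swap}\) reverses lexicographic order only among words of equal length.
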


\subsection{The hyperoctahedral group \texorpdfstring{$B_n$}{Bn}}

The rank-\(n\) Coxeter group \(W=B_n\) has simple generators
\begin{equation*}
    S=\{s_i:0\leq i\leq n-1\}
\end{equation*}
and Coxeter matrix determined, for distinct \(i,j\in I\), by
\begin{equation*}
    m(i,j)
    =
    \begin{cases}
        4,
            &\{i,j\}=\{0,1\},\\
        3,
            &\lvert i-j\rvert=1
             \text{ and }\{i,j\}\neq\{0,1\},\\
        2,
            &\lvert i-j\rvert\geq2.
    \end{cases}
\end{equation*}

We realize \(B_n\) as the group of signed permutations and write \(w\in B_n\) in one-line notation as
\begin{equation*}
    w=w_1\cdots w_n,
\end{equation*}
where \(w_1,\ldots,w_n\in\{\pm1,\ldots,\pm n\}\) and
\begin{equation*}
    \{
        \lvert w_1\rvert,\lvert w_2\rvert,\ldots,\lvert w_n\rvert
    \}
    =
    \{1,2,\ldots,n\}.
\end{equation*}
For a signed entry \(a\), we write \(\overline a=-a\).

For \(1\leq i\leq n-1\), right multiplication by \(s_i\) interchanges the entries in positions \(i\) and \(i+1\), whereas \(s_0\) changes the sign of the first entry:
\begin{equation*}
	w_1w_2\cdots w_n \cdot s_0 = \overline{w_1}w_2\cdots w_n.
\end{equation*}

We now analyze the structure of canonical words.

For \(1\leq k\leq n\), the standard parabolic subgroup \(W_k\) associated with \(I_k\) permutes \(\{\pm1,\ldots,\pm k\}\) and fixes
\(\{\pm(k+1),\ldots,\pm n\}\).
In particular, \(W_k\) is a Coxeter group
of type \(B_k\), and \(W_n=W\).

For \(0\leq i\leq n-1\) and \(\lvert j\rvert\leq i\), the word
\(\mathbf{i}_{[-i,j]}\) is reduced in \(B_n\).
We define
\begin{equation*}
    \mathbf{s}_{[-i,j]}
    :=
    \mathbf{s}\bigl(\mathbf{i}_{[-i,j]}\bigr).
\end{equation*}

The set of minimal representatives of the right cosets
\(W_{k-1}\backslash W_k\) is
\begin{equation*}
    {}^{\langle k-1\rangle}W
    :=
    \{\mathrm{id}\}
    \cup
    \bigl\{
        \mathbf{s}_{[-(k-1),j]}:
        -(k-1)\leq j\leq k-1
    \bigr\}.
\end{equation*}
Thus, \(\lvert{}^{\langle k-1\rangle}W\rvert=2k\).
Each element of \({}^{\langle k-1\rangle}W\) has a unique reduced word,
and these reduced words are precisely the prefixes of
\(\mathbf{i}_{[-(k-1),k-1]}\), with the identity corresponding to the empty prefix.

Let \(w_0\) be the longest element of~\(B_n\). By the preceding
description of the minimal coset representatives, for every \(w\in B_n\),
the canonical word \(\mathbf c(w)\) is a subword of
\begin{equation*}
    \mathbf c(w_0)
    =
    \mathbf i_{[0,0]}\cdot
    \mathbf i_{[-1,1]}\cdots
    \mathbf i_{[-(n-1),n-1]},
\end{equation*}
and has the following form for some \(q\geq0\), with the product
understood to be empty when \(q=0\):
\begin{equation}\label{eq:canonical_word_type_B}
    \mathbf c(w)
    =
    \mathbf i_{[n_1,m_1]}\cdots\mathbf i_{[n_q,m_q]}.
\end{equation}
If \(q\geq1\), the indices satisfy:
\begin{itemize}
    \item \(0\geq n_1>\cdots>n_q\geq-(n-1)\);
    \item \(\lvert m_j\rvert\leq\lvert n_j\rvert\) for every
    \(1\leq j\leq q\).
\end{itemize}
For each \(1\leq j\leq q\), the word
\(\mathbf i_{[n_j,m_j]}\) is the reduced word of the canonical factor
\(w^{(\lvert n_j\rvert+1)}\). Since this word has
\(m_j-n_j+1=\lvert n_j\rvert+1+m_j\) letters, we obtain, as in \(A_n\),
\begin{equation*}
    \ell\bigl(w^{(\lvert n_j\rvert+1)}\bigr)
    =
    \lvert n_j\rvert+1+m_j.
\end{equation*}

\begin{definition}\label{def:lehmer code for Bn LBn}
    The \emph{canonical Lehmer code for~\(B_n\)} is the map
    \begin{gather*}
        L_{B_n}\colon B_n\longrightarrow
        \prod_{i=1}^{n}\{0,1,\ldots,2i-1\},\\
        w\longmapsto
        \bigl(
            \ell(w^{(1)}),\ell(w^{(2)}),\ldots,\ell(w^{(n)})
        \bigr).
    \end{gather*}
\end{definition}
By~\cite[Theorem~5.4]{BS25}, \(L_{B_n}\) is a Lehmer code for \(B_n\).

In the next lemma, we show how to recover the one-line notation of a signed permutation \(w\) directly from its code \(L_{B_n}(w)\).
More precisely, the \(k\)-th entry of the code determines how the new largest absolute value \(k\) is inserted into the one-line notation constructed from the first \(k-1\) entries.
This insertion procedure is classical; see, for example, the final part of the proof of~\cite[Chapter~7, Section~2.2, Proposition~2.11]{Win00}.

\begin{lemma}\label{lem:LBn_insertion_description}
    Let \(w\in B_n\) and \(L_{B_n}(w)=(a_1,\ldots,a_n)\).
    Then the one-line notation of~\(w\) is obtained recursively by inserting the new largest absolute value \(k\) at step \(k\) as follows:
    \begin{equation*}
        \begin{array}{ccl}
            0\leq a_k\leq k-1
            &\Longrightarrow&
            \text{insert }k\text{ in position }k-a_k,\\
            k\leq a_k\leq2k-1
            &\Longrightarrow&
            \text{insert }\sbar k\text{ in position }a_k-k+1.
        \end{array}
    \end{equation*}
    Here, at step \(k\), positions are counted from left to right among the \(k\) positions of the resulting word.
\end{lemma}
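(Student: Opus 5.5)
The natural route is induction on \(n\), using the canonical decomposition \(w=w^{(1)}\cdots w^{(n)}\) and the explicit reduced word of each canonical factor. Write \(w_{\le k}:=w^{(1)}\cdots w^{(k)}\in W_k\); this is a signed permutation of \(\{\pm1,\ldots,\pm k\}\) fixing everything larger. The claim will follow from a sharper statement proved by induction on \(k\): the one-line notation of \(w_{\le k}\), restricted to its first \(k\) positions, is obtained from that of \(w_{\le k-1}\) by inserting \(k\) or \(\sbar k\) according to the rule in the lemma with \(a_k=\ell(w^{(k)})\). The case \(k=n\) is the lemma. For \(k=1\) we have \(W_1=\{\mathrm{id},s_0\}\). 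The code \(a_1=0\) gives the word \(1\), and \(a_1=1\) gives \(\sbar1\), which matches \(1-a_1=1\) and \(a_1-1+1=1\) respectively.

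For the inductive step, let \(v=w_{\le k-1}\). Its one-line notation has a signed permutation of \(\{1,\ldots,k-1\}\) in positions \(1,\ldots,k-1\) and \(k\) in position \(k\). Since right multiplication acts on positions, I compute \(v\cdot w^{(k)}\) letter by letter along the unique reduced word of \(w^{(k)}\), which is the prefix of length \(a_k\) of \(\mathbf i_{[-(k-1),k-1]}=(k-1)(k-2)\cdots1\,0\,1\cdots(k-1)\). There are two cases.

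\emph{Case \(a_k\le k-1\).} The prefix is \((k-1)(k-2)\cdots(k-a_k)\). Applying \(s_{k-1}\), then \(s_{k-2}\), and so on, successively swaps the entry \(k\) leftward. After \(a_k\) steps, \(k\) sits in position \(k-a_k\), and the other entries keep their relative order. This is exactly insertion of \(k\) in position \(k-a_k\).

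\emph{Case \(a_k\ge k\).} First the full descending block \((k-1)\cdots1\) moves \(k\) to position \(1\). Then \(s_0\) negates it to \(\sbar k\). The remaining \(a_k-k\) letters \(1,2,\ldots,a_k-k\) swap \(\sbar k\) rightward step by step, ending in position \(a_k-k+1\). Again the other entries keep their relative order, so this is insertion of \(\sbar k\) in position \(a_k-k+1\).

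In both cases the entries in positions \(k+1,\ldots,n\) are never touched, so the induction closes. The argument is routine. The only real care needed is the convention check that right multiplication permutes positions, not values; without it the factors would act in the wrong order and the insertion would instead occur by value. I also need that the canonical factor is applied on the right of \(w_{\le k-1}\), which is exactly the right-coset convention of Definition~\ref{def:canonical-decomposition}. Finally, I would observe that the resulting map is a bijection onto \(\prod_{k}\{0,\ldots,2k-1\}\), consistent with the count \(2^n n!\), as a sanity check.
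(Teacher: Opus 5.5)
Your proposal is correct and follows the paper's argument: append \(k\), multiply on the right by the \(k\)-th canonical factor along its unique reduced word (a prefix of \((k-1)\cdots1\,0\,1\cdots(k-1)\)), and track the entry \(k\) separately in the cases \(a_k\leq k-1\) and \(a_k\geq k\). Your explicit induction on the partial products \(w^{(1)}\cdots w^{(k)}\) spells out what the paper's step-by-step description leaves implicit.
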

\begin{proof}
    At step \(k\), append \(k\) to the word obtained after step \(k-1\)
    and multiply on the right by the \(k\)-th canonical factor
    \(w^{(k)}\).
    If \(a_k=0\), this factor is the identity, so \(k\) remains
    in position \(k\).

    Suppose that \(a_k>0\), and set \(m=a_k-k\).
    The reduced word of \(w^{(k)}\) is
    \(\mathbf{i}_{[-(k-1),m]}\).

    If \(1\leq a_k\leq k-1\), then \(m<0\), and
    \begin{equation*}
        w^{(k)}=s_{k-1}s_{k-2}\cdots s_{-m}.
    \end{equation*}
    Thus \(k\) moves left from position \(k\) to position
    \(-m=k-a_k\) and remains positive.

    If \(k\leq a_k\leq2k-1\), then \(m\geq0\), and
    \begin{equation*}
        w^{(k)}
        =
        (s_{k-1}s_{k-2}\cdots s_1)\,
        s_0\,
        (s_1s_2\cdots s_m),
    \end{equation*}
    where either parenthesized product may be empty.
    The first product moves \(k\) to the first position,
    \(s_0\) changes it to \(\overline{k}\), and the last product
    moves it right to position \(m+1=a_k-k+1\).

    In both cases, the other entries retain their signs and relative
    order, giving the stated insertions.
\end{proof}

\begin{definition}\label{def:ell_0}
    For \(w=w_1\cdots w_n\in B_n\), we denote the number of negative entries of \(w\) by
    \begin{equation*}
        \ell_0(w)
        :=
        \bigl|\{i\in\{1,\ldots,n\}:w_i<0\}\bigr|.
    \end{equation*}
\end{definition}

By Lemma~\ref{lem:LBn_insertion_description}, if
\(L_{B_n}(w)=(a_1,\ldots,a_n)\), then
\begin{equation*}
    \ell_0(w)=\bigl|\{k:1\leq k\leq n,\ a_k\geq k\}\bigr|.
\end{equation*}
This is also the number of occurrences of \(0\) in the canonical word \(\mathbf c(w)\), since its \(k\)-th factor contains exactly one \(0\) when \(a_k\geq k\), and none otherwise.
Since any two reduced words are related by braid moves, and braid moves in \(B_n\) preserve the number of occurrences of \(0\), every reduced word of \(w\) contains exactly \(\ell_0(w)\) occurrences of \(0\).

\begin{example}
    Let \(w\in B_7\) satisfy
    \begin{equation*}
        L_{B_7}(w)=(1,0,2,6,1,3,13).
    \end{equation*}
    Its canonical word is
    \begin{equation*}
        \mathbf c(w)
        =
        0 \cdot 21 \cdot 321012 \cdot 4 \cdot 543 \cdot 6543210123456,
    \end{equation*}
    where the dots separate the reduced words of the non-trivial canonical factors.
    Lemma~\ref{lem:LBn_insertion_description} gives the successive insertions
    \begin{equation*}
        \overline{1} \rightarrow \overline{1}2 \rightarrow 3\overline{1}2 \rightarrow 3\overline{1}\overline{4}2 \rightarrow 3\overline{1}\overline{4}52 \rightarrow 3\overline{1}6\overline{4}52 \rightarrow 3\overline{1}6\overline{4}52\overline{7}. 
    \end{equation*}
    Thus, \(w=3\overline1 6\overline4 52\overline7\) and
    \(\ell_0(w)=3\).
\end{example}

\subsection{Comparing reduced words in~\texorpdfstring{\(A_n\)}{An} and~\texorpdfstring{\(B_n\)}{Bn}}

Since \(B_n\) dominates \(A_n\), the first sentence of~\cite[Proposition~1.2]{Ste97} gives
\[
    \mathcal{R}(A_n)\subseteq \mathcal{R}(B_n).
\]
The following proposition combines~\cite[Proposition~4.2]{Ste97} with the discussion preceding it.
\begin{proposition}\label{prop: rex of x inter min zeros}
    For each \(x\in A_n\), there is a unique \(w\in B_n\) such that
    \begin{equation}\label{eq: rex of x inter min zeros}
        \mathcal{R}(w)
        =
        \{\mathbf{i}\in\mathcal{R}(x):
        \text{the number of zeros in \(\mathbf{i}\) is minimal}\}.
    \end{equation}
    Conversely, if \(w\in B_n\) satisfies \(\ell_0(w)\leq 1\), then there
    is a unique \(x\in A_n\) such that
    \eqref{eq: rex of x inter min zeros} holds.
\end{proposition}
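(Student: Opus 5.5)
The plan is to reduce everything to braid moves. First I pin down the minimal number of zeros, then I build mutually inverse maps between \(A_n\) and \(\{w\in B_n:\ell_0(w)\leq1\}\), and finally I close the argument by counting.

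\emph{Step 1: the minimal number of zeros is at most one.} By Corollary~\ref{cor:my_word_is_lex_max}, \(\mathbf r'(x)\in\mathcal R(x)\) is a subword of
\[
\mathbf r'(w'_0)=\mathbf i_{[n-1,n-1]}\cdots\mathbf i_{[0,n-1]},
\]
which contains a single \(0\). Hence every \(x\in A_n\) has a reduced word with at most one zero. The minimum is \(0\) exactly when \(x\) lies in the parabolic subgroup \(\langle s'_1,\ldots,s'_{n-1}\rangle\), since the set of letters occurring in a reduced word is an invariant of \(x\). In that case \(A_n\) and \(B_n\) have the same Coxeter matrix on the letters \(\{1,\ldots,n-1\}\), so the statement is immediate: take \(w=\mathbf s(\mathbf r'(x))\), which lies in \(S_n\subseteq B_n\). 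From now on I treat the case where the minimum is one.

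\emph{Step 2: braid-move analysis.} By dominance, \(\mathcal R(A_n)\subseteq\mathcal R(B_n)\). Compare the two sets of braid relations. They share all commutations and all length-\(3\) braids \(i(i+1)i=(i+1)i(i+1)\) with \(i\geq1\). They differ only in \(010=101\) (type \(A\)) versus \(0101=1010\) (type \(B\)). Both \(0101\) and \(1010\) contain two zeros. Recall also that every reduced word of \(w\in B_n\) has exactly \(\ell_0(w)\) zeros.

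So if \(\ell_0(w)=1\), no word in \(\mathcal R(w)\) contains \(0101\) or \(1010\). Hence \(\mathcal R(w)\) is connected (Tits' theorem) using only moves that are also valid in \(A_n\). All words of \(\mathcal R(w)\) have length \(\ell(w)\), and they evaluate in \(A_n\) to a single element, call it \(\varphi(w)\). The common length \(\ell(w)\) is also the length of \(\varphi(w)\), because a word reduced in \(A_n\) stays reduced in \(B_n\), and \(\mathcal R(\varphi(w))\) contains words with one zero. Therefore
\[
\mathcal R(w)\subseteq\{\mathbf i\in\mathcal R(\varphi(w)):\mathbf i\text{ has one zero}\}.
\]
The same argument handles \(\ell_0(w)=0\).

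\emph{Step 3: the inverse map and counting.} Define \(\psi(x):=\mathbf s(\mathbf r'(x))\in B_n\). This is reduced in \(B_n\) with at most one zero, so \(\ell_0(\psi(x))\leq1\), and by Step 2 we have \(\varphi(\psi(x))=x\). Hence \(\psi\) is injective. The target set \(\{\ell_0\leq1\}\) has \(n!+n\cdot n!=(n+1)!=|A_n|\) elements, so \(\psi\) and \(\varphi\) are mutually inverse bijections.

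Now let \(\mathbf i\in\mathcal R(x)\) have the minimal number of zeros. Then \(\mathbf i\) is reduced in \(B_n\), and \(w':=\mathbf s(\mathbf i)\) satisfies \(\ell_0(w')\leq1\). By Step 2, \(\varphi(w')=x\), so \(w'=\psi(x)\). Combined with the inclusion of Step 2, this gives equality~\eqref{eq: rex of x inter min zeros} for \(w=\psi(x)\).

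Uniqueness holds because distinct elements of \(B_n\) have disjoint sets of reduced words. The converse statement is exactly the bijectivity of \(\varphi\), together with the same argument.

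\emph{Main obstacle.} The key step is Step 2's observation that the only type-\(B\) braid relation absent in type~\(A\) requires two zeros. Because of this, sets of reduced words with at most one zero are closed under the braid moves of both groups. The remaining identifications then follow by counting.
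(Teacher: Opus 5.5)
Your proof is correct, but it follows a genuinely different route from the paper. The paper gives no proof of this proposition. It imports it from Stembridge (his Proposition~4.2 together with the dominance discussion before it), and the one-line description of \(\operatorname{bot}\) comes from the same source.

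You instead give a self-contained argument built from:
\begin{enumerate}
\item Matsumoto's theorem;
\item the invariance of the number of zeros under \(B_n\)-braid moves;
\item the inclusion \(\mathcal R(A_n)\subseteq\mathcal R(B_n)\);
\item the one-zero word \(\mathbf r'(x)\);
\item the count \(\lvert\{w\in B_n:\ell_0(w)\leq1\}\rvert=(n+1)!\).
\end{enumerate}
Your key observation is that \(0101=1010\) is the only \(B_n\)-braid relation not valid in \(A_n\), and it needs two zeros. This is exactly what forces each \(\mathcal R(w)\) with \(\ell_0(w)\leq1\) to evaluate to a single element of \(A_n\). The counting then takes the place of Stembridge's structural analysis.

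As a by-product, you obtain at once \(\operatorname{bot}(x)=\mathbf s(\mathbf r'(x))\) and the bijection \(A_n\to\{\ell_0\leq1\}\). The paper derives these only afterwards, in Lemma~\ref{lem:reduced words of bottom elements} and in the proof of Theorem~\ref{thm: iso posets An to Bn intro}. What your route does not give is the explicit one-line description of \(\operatorname{bot}\), which the paper also takes from Stembridge.

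One step needs reordering. In Step~2 you assert \(\ell(\varphi(w))=\ell(w)\) ``because a word reduced in \(A_n\) stays reduced in \(B_n\), and \(\mathcal R(\varphi(w))\) contains words with one zero.'' This does not follow at that point. A one-zero reduced word of \(\varphi(w)\) is indeed reduced in \(B_n\), but nothing yet says it evaluates to \(w\) there; that is precisely the injectivity of \(\varphi\).

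Fortunately, Step~3 uses only that \(\varphi\) is well defined, not the length claim. So once you have \(\psi\circ\varphi=\mathrm{id}\) on \(\{\ell_0\leq1\}\), you get \(\ell(w)=\ell\bigl(\mathbf s(\mathbf r'(\varphi(w)))\bigr)=\ell(\varphi(w))\). Hence every word of \(\mathcal R(w)\) is reduced in \(A_n\). By invariance of the support, its zero count \(\ell_0(w)\) is then the minimum for \(\varphi(w)\), which gives the inclusion you need for equality. With this reordering the proof is complete.
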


\begin{definition}\label{def:bottom map and bottom element}
    For \(x\in A_n\), the \emph{bottom element associated with \(x\)}, denoted by \(\operatorname{bot}(x)\), is the unique element of \(B_n\) supplied by Proposition~\ref{prop: rex of x inter min zeros}.
    The resulting map
    \begin{equation*}
        \operatorname{bot}\colon A_n\longrightarrow B_n
    \end{equation*}
    is called the \emph{bottom map}.
\end{definition}

This definition agrees with the one-line description of the bottom map given in the introduction.

\begin{example}
    For \(n=7\), consider \(x=72106534\in A_7\). By the one-line description of the bottom map,
    \[
        \operatorname{bot}(x)=21\overline{7}6534\in B_7.
    \]
    A SageMath computation shows that \(x\) has
    \(1{,}117{,}025\) reduced words, of which \(67{,}795\) contain exactly one zero.
    These are precisely the reduced words of \(\operatorname{bot}(x)\).
\end{example}
\section{The Bruhat poset of \texorpdfstring{\(A_n\)}{An} as a lower interval in \texorpdfstring{\(B_n\)}{Bn}}\label{sec:bottom-map}

Finite standard parabolic subgroups provide a simple source of lower Bruhat intervals.
Indeed, if \(W_P\subseteq W\) is a finite standard parabolic subgroup with longest element \(w_P\), then \begin{equation*}
    (W_P,\leq)=[\mathrm{id},w_P]
\end{equation*}
as subposets of \(W\).

We now prove Theorem~\ref{thm: iso posets An to Bn intro},
which gives such a realization beyond the standard parabolic setting.
We first relate the bottom map to the reduced words \(\mathbf r'(x)\) of Definition~\ref{def:my_reduced_word_in_An}.

\begin{lemma}\label{lem:reduced words of bottom elements}
    For every \(x\in A_n\),
    \begin{equation*}
        \mathcal R(\operatorname{bot}(x))
        =
        \{
            \mathbf i\in\mathcal R(x):
            \mathbf i\text{ contains at most one zero}
        \}.
    \end{equation*}
    In particular,
    \begin{equation*}
        \mathbf r'(x)\in\mathcal R(\operatorname{bot}(x)),
        \qquad
        \operatorname{bot}(x)
        =
        \mathbf s\bigl(\mathbf r'(x)\bigr).
    \end{equation*}
\end{lemma}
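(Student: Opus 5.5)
The plan is to deduce the lemma from Proposition~\ref{prop: rex of x inter min zeros} by showing that, for every \(x\in A_n\), the minimal number of zeros among the reduced words of \(x\) is at most one. Once that holds, the words of \(\mathcal R(x)\) with the minimal number of zeros are exactly the words with at most one zero. The one subtlety is the case where some word has no zero and others might have one, which I handle with a support argument. The key input is Corollary~\ref{cor:my_word_is_lex_max}.

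\emph{Step 1: a reduced word with at most one zero.} By Corollary~\ref{cor:my_word_is_lex_max}, \(\mathbf r'(x)\in\mathcal R(x)\) and \(\mathbf r'(x)\) is a subword of
\[
    \mathbf r'(w'_0)=\mathbf i_{[n-1,n-1]}\,\mathbf i_{[n-2,n-1]}\cdots\mathbf i_{[0,n-1]}.
\]
The letter \(0\) occurs in this word only once, in the final block \(\mathbf i_{[0,n-1]}\). Hence \(\mathbf r'(x)\) contains at most one zero, and the minimal number of zeros over \(\mathcal R(x)\) is \(0\) or \(1\).

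\emph{Step 2: the two cases.} Suppose first that some reduced word of \(x\) has no zero. Then \(x\) lies in the standard parabolic subgroup \(\langle s'_1,\ldots,s'_{n-1}\rangle\). Every reduced word of an element uses the same set of generators (for instance by Matsumoto/Tits, since braid moves preserve the support). So no reduced word of \(x\) contains \(0\). In this case the minimal number of zeros is \(0\), and
\[
    \{\mathbf i\in\mathcal R(x):\text{at most one zero}\}=\mathcal R(x)=\{\mathbf i\in\mathcal R(x):\text{zero count minimal}\}.
\]
Otherwise, every reduced word of \(x\) contains at least one zero, and by Step~1 the minimum equals \(1\). Then the words with at most one zero are exactly those with exactly one zero, which are exactly those with the minimal count. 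In both cases, Proposition~\ref{prop: rex of x inter min zeros} and Definition~\ref{def:bottom map and bottom element} give the displayed equality for \(\mathcal R(\operatorname{bot}(x))\).

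\emph{Step 3: the final claims.} By Step~1, \(\mathbf r'(x)\) belongs to the right-hand set, so \(\mathbf r'(x)\in\mathcal R(\operatorname{bot}(x))\). A reduced word of an element of \(B_n\) evaluates to that element, so \(\operatorname{bot}(x)=\mathbf s(\mathbf r'(x))\).

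I expect no serious obstacle. The only points needing care are the count of zeros in \(\mathbf r'(w'_0)\), which is immediate from the block structure, and the support-invariance argument in the zero-free case.
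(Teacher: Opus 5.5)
Your proposal is correct and matches the paper's proof step for step. Both arguments use Corollary~\ref{cor:my_word_is_lex_max} to bound the minimal zero count by one, via the single zero in \(\mathbf r'(w'_0)\), then use invariance of the support under braid moves to split into the zero-free and zero-containing cases before applying Proposition~\ref{prop: rex of x inter min zeros}.
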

\begin{proof}
    By Corollary~\ref{cor:my_word_is_lex_max},
    \(\mathbf r'(x)\) is a reduced word of \(x\) and a subword of
    \(\mathbf r'(w'_0)\), which contains exactly one zero.
    Hence the minimum number of zeros among the reduced words of
    \(x\) is at most one.

    Any two reduced words of \(x\) are related by braid moves,
    which preserve the set of indices occurring.
    If one reduced word contains no zero, then none does, and
    every reduced word attains the minimum.
    Otherwise, every reduced word contains at least one zero,
    so the minimum is one and is attained precisely by those
    containing exactly one zero.

    In either case, the reduced words attaining the minimum are
    precisely those containing at most one zero.
    Proposition~\ref{prop: rex of x inter min zeros} therefore gives
    the stated equality. Since \(\mathbf r'(x)\) contains at most
    one zero, the remaining assertions follow.
\end{proof}

\begin{proof}[Proof of Theorem~\ref{thm: iso posets An to Bn intro}]
    The one-line description of the bottom map gives
    \begin{equation*}
        \operatorname{bot}(w'_0)
        =
        \operatorname{bot}\bigl(n(n-1)\cdots 10\bigr)
        =
        (n-1)(n-2)\cdots1\,\sbar n
        =
        b_0.
    \end{equation*}
    
    We first show that \(\operatorname{bot}\) preserves and reflects
    Bruhat order.
    Let \(x,y\in A_n\).
    By Lemma~\ref{lem:reduced words of bottom elements},
    \(\mathbf r'(y)\) is a reduced word of both \(y\) and
    \(\operatorname{bot}(y)\).
    Every subword of \(\mathbf r'(y)\) contains at most one zero,
    so the same lemma implies that such a subword belongs to
    \(\mathcal R(x)\) if and only if it belongs to
    \(\mathcal R(\operatorname{bot}(x))\).
    Applying the Subword Property in both groups gives
    \begin{equation*}
        x\leq y
        \quad\Longleftrightarrow\quad
        \operatorname{bot}(x)\leq\operatorname{bot}(y).
    \end{equation*}
    Thus, \(\operatorname{bot}\) is a poset embedding.

    Lemma~\ref{lem:reduced words of bottom elements} and the converse
    part of Proposition~\ref{prop: rex of x inter min zeros} give
    \begin{equation*}
        \operatorname{bot}(A_n)
        =
        \{w\in B_n:\ell_0(w)\leq1\}.
    \end{equation*}
    Since \(x\leq w'_0\) for every \(x\in A_n\), order preservation
    gives \(\operatorname{bot}(A_n)\subseteq[\mathrm{id},b_0]\).
    Conversely, let \(w\leq b_0\). By the Subword Property, some
    reduced word of \(w\) is a subword of
    \(\mathbf r'(w'_0)\in\mathcal R(b_0)\).
    This word contains at most one zero, so \(\ell_0(w)\leq1\).
    Hence \(w\in\operatorname{bot}(A_n)\), proving that
    \(\operatorname{bot}(A_n)=[\mathrm{id},b_0]\).
\end{proof}

\begin{remark}[Other finite and affine types]
\label{rem:other-types-search}
    Using SageMath, we checked whether a poset embedding with lower-interval image, as in Theorem~\ref{thm: iso posets An to Bn intro}, exists in several other Coxeter types.
    More precisely, for a Coxeter group \(W\) of rank \(n\), we searched
    for elements \(w\in W\) such that \([\mathrm{id},w]\) is isomorphic
    to the Bruhat poset \((A_n,\leq)\).
    Apart from the rank-two dihedral family, we found only isolated
    low-rank examples among the cases examined; no further uniform
    family emerged.
    \begin{itemize}
        \item \textbf{Crystallographic finite types.}
        Among the remaining irreducible crystallographic finite types
        of ranks \(4\), \(5\), and \(6\), no such intervals occur.
        Namely, in types \(D_4\), \(D_5\), and \(D_6\), there are no
        lower intervals isomorphic to \((A_4,\leq)\),
        \((A_5,\leq)\), and \((A_6,\leq)\), respectively.
        Similarly, in types \(F_4\) and \(E_6\), there are no lower
        intervals isomorphic to \((A_4,\leq)\) and \((A_6,\leq)\),
        respectively.

        \item \textbf{Types \(H_3\) and \(H_4\).}
        In these non-crystallographic finite types, such intervals do
        occur. For \(H_n\), where \(n\in\{3,4\}\), we use the index set
        \(\{1,\ldots,n\}\), with \(m(1,2)=5\).
        Each type contains a unique such element; reduced words for
        these elements are \(123121\) and \(1234123121\), respectively.

        \item \textbf{Type \(I_2(m)\).}
        With the simple generators indexed by \(1\) and \(2\), for every
        \(m\geq3\), the element with reduced word \(121\) has a lower
        interval isomorphic to \((A_2,\leq)\).

        \item \textbf{Affine types.}
        We use the standard affine indexing, with the affine simple
        generator indexed by \(0\). Since an isomorphism of lower
        intervals preserves rank, it suffices to check elements of
        length \(6\) in the types with subscript \(2\) and elements of
        length \(10\) in those with subscript \(3\).

        No such interval occurs in \(\widetilde A_2\) or
        \(\widetilde B_2\). In \(\widetilde G_2\), however, the element
        with reduced word \(020120\) gives such an interval.
        No such interval occurs in \(\widetilde A_3\),
        \(\widetilde B_3\), or \(\widetilde C_3\).
    \end{itemize}
\end{remark}
\section{Reflected bounded-growth tuples}\label{sec:rbg}

We now study in more detail the class of tuples indexing the type~\(B\)
family in our classification. For convenience, we recall its definition.

\begin{definition}\label{def: rbg and rbg tuples}
    An \(n\)-tuple \(\mathbf{a}=(a_1,a_2,\ldots,a_n)\in\mathbb Z^n\) has
    \emph{reflected bounded-growth}, or is an \emph{rbg tuple}, if
    \begin{itemize}
        \item \(0\leq a_i\leq 2i-1\) for all \(i\);
        \item for every \(2\leq i\leq n\), if \(a_{i-1}<2i-3\), then
        \[
            a_i\leq
            \min\{a_{i-1}+1,\,2i-3-a_{i-1}\}.
        \]
    \end{itemize}
    The second condition is called the \emph{reflected bound}.
    We denote by \(\mathcal C_n\) the set of all rbg \(n\)-tuples.
\end{definition}

The terminology refers to the second term in the minimum:
\(2i-3-a_{i-1}\) is obtained by reflecting \(a_{i-1}\) through
\(i-\frac{3}{2}\).

Following Bolognini and Sentinelli~\cite[Definition~7.6]{BS25}, we make the following definition.
\begin{definition}\label{def:lazy_fubini}
    The set of \emph{lazy Fubini words} of length \(n\) is
    \begin{equation*}
        \widehat{F}_n
        :=
        \left\{
            (a_1,\ldots,a_n):
            a_1=0
            \text{ and }
            0\leq a_{i+1}\leq a_i+1
            \text{ for }1\leq i<n
        \right\}.
    \end{equation*}
\end{definition}
Lazy Fubini words are enumerated by the Catalan numbers~\cite[Exercise~6.19(u)]{Sta99}:
\begin{equation*}
    \lvert\widehat{F}_n\rvert
    =
    C_n
    =
    \frac{1}{n+1}\binom{2n}{n}.
\end{equation*}

When the first entry is \(0\), the reflected bound becomes redundant,
giving the following identification.
\begin{lemma}\label{lem: rbg starting at 0 are all lazy fubini words}
    We have
    \begin{equation}\label{eq: rbg int 0 equals lazy fubini}
        \mathcal C_n\cap\{\mathbf a:a_1=0\}
        =
        \widehat F_n.
    \end{equation}
    In particular, every lazy Fubini word is an rbg tuple.
\end{lemma}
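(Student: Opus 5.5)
We prove the two inclusions in \eqref{eq: rbg int 0 equals lazy fubini} directly from Definitions~\ref{def: rbg and rbg tuples} and~\ref{def:lazy_fubini}. The central observation is that, under \(a_1=0\), every entry satisfies \(a_i\leq i-1\). This places \(a_{i-1}\) below the threshold \(2i-3\) and makes the reflected term \(2i-3-a_{i-1}\) at least \(a_{i-1}+1\). The minimum in the reflected bound is then always \(a_{i-1}+1\), which is exactly the lazy Fubini growth condition.

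\emph{The inclusion \(\subseteq\).} Let \(\mathbf a\in\mathcal C_n\) with \(a_1=0\). By induction on \(i\) we show that \(a_i\leq i-1\) and \(a_i\leq a_{i-1}+1\) for \(i\geq2\). The base case is \(a_1=0\). For \(i\geq2\), the induction hypothesis gives \(a_{i-1}\leq i-2<2i-3\), because \(i-2<2i-3\) is equivalent to \(i>1\). So the reflected bound applies and gives \(a_i\leq a_{i-1}+1\leq i-1\). Nonnegativity is part of the rbg definition, so \(\mathbf a\in\widehat F_n\).

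\emph{The inclusion \(\supseteq\).} Let \(\mathbf a\in\widehat F_n\). Induction again gives \(0\leq a_i\leq i-1\leq 2i-1\), so the box condition holds. For \(2\leq i\leq n\) we have \(a_{i-1}\leq i-2<2i-3\), and we must check that \(a_i\leq\min\{a_{i-1}+1,\,2i-3-a_{i-1}\}\). The first bound is part of the lazy Fubini definition. For the second, \(2i-3-a_{i-1}\geq 2i-3-(i-2)=i-1\geq a_i\). Hence \(\mathbf a\in\mathcal C_n\), and since \(a_1=0\), it lies in the left-hand side.

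The final assertion follows at once from the equality. No step is a genuine obstacle; the only care needed is the inequality \(a_{i-1}\leq i-2<2i-3\), which guarantees that the hypothesis of the reflected bound is always triggered and that the reflected term never binds.
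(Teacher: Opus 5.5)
Your proposal is correct and follows essentially the same route as the paper: an induction showing \(a_i\leq i-1\) under \(a_1=0\), so that the reflected bound is always triggered and reduces to \(a_i\leq a_{i-1}+1\), with the converse using \(a_i\leq i-1\leq 2i-3-a_{i-1}\). Your write-up just makes explicit the check \(i-2<2i-3\) and the box condition, which the paper leaves implicit.
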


\begin{proof}
    Let \(\mathbf a=(a_1,\ldots,a_n)\in\mathcal C_n\) satisfy \(a_1=0\).
    An induction using the rbg condition gives
    \begin{equation*}
        a_i\leq i-1\quad(1\leq i\leq n),
    \end{equation*}
    In particular, for \(2\leq i\leq n\), the rbg condition applies and
    gives \(a_i\leq a_{i-1}+1\). Hence
    \(\mathbf a\in\widehat F_n\).

    Conversely, let \(\mathbf a\in\widehat F_n\). Again,
    \(a_i\leq i-1\) for every \(i\). Therefore
    \begin{equation*}
        a_i\leq a_{i-1}+1
        \qquad\text{and}\qquad
        a_i\leq i-1\leq 2i-3-a_{i-1}
    \end{equation*}
    for \(2\leq i\leq n\). Thus \(\mathbf a\) satisfies the rbg
    condition.
\end{proof}

We count rbg tuples.
\begin{proposition}\label{prop: counting rbg}
    \(\lvert\mathcal C_n\rvert=C_n+C_{n+1}-1\).
\end{proposition}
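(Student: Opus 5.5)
We split \(\mathcal C_n\) according to the first entry. Since \(0\leq a_1\leq 1\), either \(a_1=0\) or \(a_1=1\). Lemma~\ref{lem: rbg starting at 0 are all lazy fubini words} identifies the tuples with \(a_1=0\) with \(\widehat F_n\), which contributes exactly \(C_n\). It therefore remains to show that
\[
\bigl|\{\mathbf a\in\mathcal C_n:a_1=1\}\bigr|=C_{n+1}-1 .
\]

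The key observation is that the reflected bound depends on \(a_{i-1}\) only through a folded height. For \(a_{i-1}\leq 2i-4\),
\[
\min\{a_{i-1}+1,\,2i-3-a_{i-1}\}=h_{i-1}+1,\qquad h_{i-1}:=\min\{a_{i-1},\,2i-4-a_{i-1}\}\in\{0,\ldots,i-2\}.
\]
Each height \(h<i-2\) is realized by exactly two values of \(a_{i-1}\), namely \(h\) and \(2i-4-h\). The height \(h=i-2\) is realized by a single value, \(a_{i-1}=i-2\). The remaining value \(a_{i-1}=2i-3\) is the maximal one, and after it \(a_i\) is unconstrained in \(\{0,\ldots,2i-1\}\). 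Note that \(a_1=1\) is maximal at position \(1\). So a tuple with \(a_1=1\) decomposes into blocks, each starting immediately after a maximal entry. Inside a block, the sequence of folded heights is a bounded-growth sequence \(h_{i}\leq h_{i-1}+1\), and each step carries a weight \(2\) or \(1\) according to whether the reflected copy is available.

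I will set up a transfer-matrix recursion. Let \(N_k(h)\) be the number of admissible prefixes \((a_1,\ldots,a_k)\) with \(a_1=1\) whose last entry has folded height \(h\). Let \(M_k\) be the number of such prefixes whose last entry is maximal. The recursion reads \(N_{k}(h)=\varepsilon_k(h)\bigl(M_{k-1}+\sum_{h'\geq h-1}N_{k-1}(h')\bigr)\), where \(\varepsilon_k(h)\in\{1,2\}\) is the multiplicity just described. One must take care that an entry following a maximal one may itself have any height \(0\leq h\leq k-1\) or be maximal. The target identity is \(M_n+\sum_h N_n(h)=C_{n+1}-1\).

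I will compute small cases to guess a closed form for \(N_k(h)\), expected to be a difference of ballot numbers such as \(\binom{2k-1-h}{k-1}-\binom{2k-1-h}{k}\) up to shift. I will then verify by induction that this form satisfies the recursion, and finish by summing with the standard ballot/Catalan telescoping. As a check, \(n=3\) should give \(13=C_4-1\), matching the \(13\) tuples of \(\mathcal C_3\) with first entry \(1\) in Example~\ref{ex:palindromic-polynomials-B3}. Alternatively, I would look for a direct bijection from these tuples to \(\widehat F_{n+1}\setminus\{\text{one word}\}\), unfolding each doubled height into a choice of an extra up-step.

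The main obstacle is the bookkeeping at block restarts, where after a maximal entry the bound jumps back to the full range. Guessing and verifying the correct closed form for \(N_k(h)\), or the corresponding bijection, is where I expect the real work to lie.
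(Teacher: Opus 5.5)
There is a genuine gap. The recursion you write down is incorrect, and the step that would actually produce \(C_{n+1}-1\) is only announced, never carried out.

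The error is your claim that inside a block every step carries weight \(2\) or \(1\). If \(a_{i-1}\) is not maximal, the reflected bound gives \(a_i\leq h_{i-1}+1\leq i-1\). So \(a_i\) lies in the lower half and equals its own folded height, and the second value \(2i-2-h_i\) with the same folded height is never admissible. Hence \(\varepsilon_k(h)\) should multiply only the contribution of a maximal predecessor, and the correct recursion is
\[
N_k(h)=\varepsilon_k(h)\,M_{k-1}+\sum_{h'\geq h-1}N_{k-1}(h').
\]
Your version fails the very check you propose. For \(n=3\) it gives \((N_3(0),N_3(1),N_3(2))=(8,8,2)\) instead of the correct \((5,5,2)\). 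With \(M_2=M_3=1\), that is a total of \(19\) rather than \(13\).

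The same inequality shows that your block picture is degenerate. Since \(a_i\geq i\) forces \(a_{i-1}=2i-3\), the maximal entries of a tuple with \(a_1=1\) form an initial run \((1,3,\ldots,2j-1)\). So there is exactly one block, and \(M_k=1\) for all \(k\). The \emph{bookkeeping at block restarts} you single out as the main obstacle therefore disappears. Your suggested bijection that unfolds \emph{each} doubled height rests on the same misconception: at most one entry of a tuple, the first non-maximal one, can take a value in the upper half.

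Even after this correction, you still have to solve the recursion, and guessing a closed form from small cases is not a proof. The paper needs no closed form. It counts \(F_n(m)\), the number of rbg tuples with \(a_1=1\) and \(a_n=m\). It observes \(F_n(m)=1\) for \(m\geq n\), because of the unique prefix \((1,3,\ldots,2n-3)\), and \(F_n(m)=1+\sum_x F_{n-1}(x)\) otherwise. It then sets \(G_n(m)=1+F_n(m)\) for \(m\leq n-1\) and \(G_n(n)=1\). This shift absorbs the restart term and turns the recursion into the Catalan triangle recurrence \(G_n(m)=\sum_{x=m-1}^{n-1}G_{n-1}(x)\) with \(G_1=(1,1)\). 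Its row sum is \(C_{n+1}\), and removing \(G_n(n)=1\) gives \(C_{n+1}-1\).

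In your notation the same shift reads \(G_k(h)=N_k(h)\) for \(h\leq k-2\), \(G_k(k-1)=N_k(k-1)+1\), and \(G_k(k)=M_k\). Under it, your corrected recursion becomes exactly this recurrence, whose entries are the ballot numbers \(\binom{2k-h}{k}-\binom{2k-h}{k+1}\).
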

\begin{proof}
    By Lemma~\ref{lem: rbg starting at 0 are all lazy fubini words},
    the number of tuples in \(\mathcal C_n\) with \(a_1=0\) is \(C_n\).
    It remains to count those with \(a_1=1\).

    For \(0\leq m\leq2n-1\), let
    \begin{equation*}
        F_n(m)
        :=
        \bigl|
            \{
                (a_1,\ldots,a_n)\in\mathcal C_n:
                a_1=1,\ a_n=m
            \}
        \bigr|.
    \end{equation*}
    We have \(F_1(0)=0\) and \(F_1(1)=1\). For \(n\geq2\), conditioning
    on \(a_{n-1}=x\) gives
    \begin{equation}\label{eq:Fn-recurrence}
        F_n(m)
        =
        1+
        \sum_{x=\max\{0,m-1\}}^{\min\{2n-4,\,2n-3-m\}}
        F_{n-1}(x),
    \end{equation}
    where an empty sum is zero. Indeed, the initial \(1\) corresponds to
    the unique prefix
    \begin{equation*}
        (1,3,\ldots,2n-3),
    \end{equation*}
    while, for \(x<2n-3\), the rbg condition is equivalent to
    \begin{equation*}
        m-1\leq x\leq2n-3-m.
    \end{equation*}
    In particular,
    \begin{equation*}
        F_n(m)=1
        \qquad(n\leq m\leq2n-1),
        \qquad
        F_n(0)=F_n(1).
    \end{equation*}

    Define
    \begin{equation*}
        G_n(m)
        :=
        \begin{cases}
            1+F_n(m),&0\leq m\leq n-1,\\
            1,&m=n,\\
            0,&m<0.
        \end{cases}
    \end{equation*}
    Equation~\eqref{eq:Fn-recurrence} gives
    \begin{equation}\label{eq:Gn-catalan-recurrence}
        G_n(m)
        =
        \sum_{x=m-1}^{n-1}G_{n-1}(x)
        \qquad(0\leq m\leq n).
    \end{equation}
    For \(1\leq m\leq n-2\), this follows by separating the sum
    in~\eqref{eq:Fn-recurrence} at \(x=n-2\):
    \begin{align*}
        G_n(m)
        &=
        2+\sum_{x=m-1}^{2n-3-m}F_{n-1}(x)\\
        &=
        1+\sum_{x=m-1}^{n-2}G_{n-1}(x)
        =
        \sum_{x=m-1}^{n-1}G_{n-1}(x).
    \end{align*}
    The cases \(m=0,n-1,n\) follow directly from
    \(F_n(0)=F_n(1)\), Equation~\eqref{eq:Fn-recurrence}, and the
    definition of \(G_n\), respectively.

    Together with \(G_1(0)=G_1(1)=1\),
    \eqref{eq:Gn-catalan-recurrence} is the Catalan triangle recurrence.
    Hence
    \begin{equation*}
        \sum_{m=0}^nG_n(m)=C_{n+1}.
    \end{equation*}
    Since \(G_n(n)=1\), the number of rbg tuples with \(a_1=1\) is
    \begin{align*}
        \sum_{m=0}^{2n-1}F_n(m)
        &=
        n+\sum_{m=0}^{n-1}F_n(m)\\
        &=
        \sum_{m=0}^{n-1}G_n(m)
        =
        C_{n+1}-1.
    \end{align*}
    Adding the \(C_n\) tuples with \(a_1=0\) proves the result.
\end{proof}
\section{\texorpdfstring{$L_{B_n}$}{LBn}-principal elements}

In this section, we prove that the Kempf elements in~\(B_n\), equivalently
the \(L_{B_n}\)-principal elements, are exactly those whose \(L_{B_n}\)-codes
are rbg tuples.
We then give an equivalent characterization in terms of pattern avoidance.

We first compare the type~\(A\) and type~\(B\) constructions. Consider the
standard parabolic subgroup
\begin{equation*}
    S_n=\langle s_1,\ldots,s_{n-1}\rangle\subset B_n.
\end{equation*}
For every \(w\in S_n\), the canonical Lehmer codes satisfy
\begin{equation*}
    L_{B_n}(w)=\bigl(0,L_{S_n}(w)\bigr).
\end{equation*}
Moreover, if \(u\leq w\) in \(B_n\), then \(u\in S_n\). Indeed, \(w\) has a
reduced expression using only the generators \(s_1,\ldots,s_{n-1}\), so the
claim follows from the Subword Property (Lemma~\ref{lem:Subword Property}).
Consequently,
\begin{equation*}
    \mathrm{Pr}(L_{S_n})
    =
    \mathrm{Pr}(L_{B_n})\cap S_n.
\end{equation*}

By~\cite[Proposition~7.8]{BS25}, for every \(w\in S_n\), the following
conditions are equivalent:
\begin{equation}\label{eq:type-A-principal-code-pattern}
    w\in\mathrm{Pr}(L_{S_n})
    \quad\Longleftrightarrow\quad
    \bigl(0,L_{S_n}(w)\bigr)\in\widehat F_n
    \quad\Longleftrightarrow\quad
    \text{\(w\) avoids \(312\)}.
\end{equation}
Moreover, Lemma~\ref{lem: rbg starting at 0 are all lazy fubini words}
gives
\begin{equation*}
    \widehat F_n
    =
    \mathcal C_n\cap\{a_1=0\}.
\end{equation*}

We use pattern avoidance for signed permutations as in~\cite[Definition~4.1]{Bil98}. Consider the following eight signed
patterns in \(B_2\) and \(B_3\):
\begin{equation}\label{eq: 8 bad patterns}
    \sbar{2}1,\quad
    1\sbar{2},\quad
    \sbar{3}\sbar{2}\sbar{1},\quad
    \sbar{2}\sbar{3}\sbar{1},\quad
    \sbar{2}\sbar{1}\sbar{3},\quad
    3\sbar{2}\sbar{1},\quad
    3\sbar{1}2,\quad
    312.
\end{equation}

The theorem below may be viewed as a type~\(B\) extension of all three equivalences above.
Indeed, under the standard inclusion \(S_n\subset B_n\), the preceding identities show that its first two conditions restrict to the first two conditions in~\eqref{eq:type-A-principal-code-pattern}.
Moreover, since the elements of \(S_n\) are unsigned, they automatically avoid the seven signed patterns in~\eqref{eq: 8 bad patterns}; hence the third condition reduces to avoidance of \(312\).

\begin{theorem}\label{thm:equivalence of rbg codes}
    Let \(L_{B_n}\) be as in Definition~\ref{def:lehmer code for Bn LBn}.
    For \(w\in B_n\), the
    following statements are equivalent:
    \begin{enumerate}[label=(\roman*)]
        \item \(w\) is \(L_{B_n}\)-principal.
        \label{thm:equivalence of rbg codes-i}
        \item \(L_{B_n}(w)\) is an rbg tuple.
        \label{thm:equivalence of rbg codes-ii}
        \item \(w\) avoids the eight patterns in~\eqref{eq: 8 bad patterns}.
        \label{thm:equivalence of rbg codes-iii}
    \end{enumerate}
    In particular, \( \left\lvert\mathrm{Pr}(L_{B_n})\right\rvert =C_n+C_{n+1}-1\).
\end{theorem}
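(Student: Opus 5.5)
The plan is to prove the cycle (i)$\Rightarrow$(ii)$\Rightarrow$(iii)$\Rightarrow$(i). The cardinality statement then follows from (i)$\Leftrightarrow$(ii), since $L_{B_n}$ is a bijection, together with Proposition~\ref{prop: counting rbg}. Throughout, write $\mathbf a=L_{B_n}(w)$. I will use Lemma~\ref{lem:LBn_insertion_description} to move between the code and the one-line notation.

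\emph{(i)$\Rightarrow$(ii).} I argue by contraposition. Suppose $\mathbf a$ violates the reflected bound at some index $i$, so that $a_{i-1}<2i-3$ and either $a_i>a_{i-1}+1$ or $a_i>2i-3-a_{i-1}$. I will exhibit an explicit $u\le w$ with $L_{B_n}(u)\not\preceq\mathbf a$, working inside the canonical word. The factors $\mathbf i_{[-(i-2),m_{i-1}]}\,\mathbf i_{[-(i-1),m_i]}$ of $\mathbf c(w)$ form a subword of $\mathbf c(w)$. Applying commutations and braid relations to this subword should let me extract a subword of $\mathbf c(w)$ that is reduced and represents an element $u$ whose $(i-1)$-st canonical factor is strictly longer than $a_{i-1}$.

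In the first violation, $a_i>a_{i-1}+1$, the mechanism is the type~$A$ one: the letter sequence of the $i$-th factor contains a longer prefix of $\mathbf i_{[-(i-2),\cdot]}$ than the $(i-1)$-st factor does. Deleting the leading letter $i-1$ of the $i$-th factor then leaves a word that can be used as the $(i-1)$-st factor of $u$. In the second violation, $a_i>2i-3-a_{i-1}$, the $i$-th factor passes through $0$. Here I use the relation $s_0s_1s_0s_1=s_1s_0s_1s_0$, reflected through the midpoint $i-\tfrac32$, to produce a longer $(i-1)$-st factor. Each of these $u$ satisfies $u\le w$ by the Subword Property (Lemma~\ref{lem:Subword Property}), and $L(u)_{i-1}>a_{i-1}$, so $w$ is not principal.

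\emph{(ii)$\Rightarrow$(iii).} Also by contraposition: if $w$ contains one of the eight patterns, then $\mathbf a$ violates the reflected bound. I will take an occurrence of a pattern and look at the largest absolute value $k$ among its letters. Lemma~\ref{lem:LBn_insertion_description} reads $a_k$ and $a_{k-1}$ off the relative positions and signs of the entries at the time $k$ is inserted. Choosing the occurrence suitably, for instance so that the pattern values are consecutive after restricting to $\{\pm1,\dots,\pm k\}$, should show directly that $a_k$ exceeds $a_{k-1}+1$ or $2k-3-a_{k-1}$. This step needs a case check over the eight patterns.

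\emph{(iii)$\Rightarrow$(i).} This is the expected obstacle. I would first prove that pattern avoidance forces the rbg condition, essentially by reversing the previous step: a violation at $i$ creates one of the eight patterns among the values $i-1$, $i$ and one earlier value. I would then show that rbg implies principal by induction on $n$ via the projection $w\mapsto w|_{\{\pm1,\dots,\pm(n-1)\}}$, which deletes the entry $\pm n$ and keeps the first $n-1$ code entries. Let $u\le w$. Projection preserves Bruhat order for the parabolic quotient $W_{n-1}\backslash W$, so the projection of $u$ is below the projection of $w$, and the induction hypothesis gives $L(u)_j\le a_j$ for $j<n$. It remains to show $L(u)_n\le a_n$, that is, that the entry $\pm n$ of $u$ lies no further along than in $w$ in the order $n\to$ left $\to \sbar n\to$ right. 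I expect to prove this with the signed tableau criterion for Bruhat order, using the rbg bound $a_n\le\min\{a_{n-1}+1,\,2n-3-a_{n-1}\}$ to control the position of $n$ relative to the entry $\pm(n-1)$. This comparison is the delicate part of the whole argument.
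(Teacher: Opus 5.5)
\textbf{Gap in (ii)\(\Rightarrow\)(i).} Your proof has a genuine gap in the step (ii)\(\Rightarrow\)(i), which is the only place your cycle returns to (i).

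You assert that ``projection preserves Bruhat order for the parabolic quotient \(W_{n-1}\backslash W\), so the projection of \(u\) is below the projection of \(w\).'' But deleting \(\pm n\) sends \(w=w^{(1)}\cdots w^{(n)}\) to its parabolic component \(w_J:=w^{(1)}\cdots w^{(n-1)}\in W_{n-1}\), and \(w\mapsto w_J\) is not order-preserving. The order-preserving projection is the other one, \(w\mapsto w^{(n)}\), onto minimal coset representatives. Two counterexamples:
\begin{itemize}
\item In \(B_2\) you have \(u=s_0=\sbar1 2\le w=s_1s_0=\sbar2 1\), yet deleting \(\pm2\) gives \(\sbar1\not\le 1\). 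Indeed \(L_{B_2}(u)=(1,0)\not\preceq(0,2)=L_{B_2}(w)\).
\item Likewise \(213\le 312\) in \(S_3\subset B_3\), while \(21\not\le 12\).
\end{itemize}

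Conversely, the comparison you call delicate is automatic for every \(w\). Since \(u^{(n)}\le w^{(n)}\) and \({}^{\langle n-1\rangle}W\) is a chain (the prefixes of \(\mathbf i_{[-(n-1),n-1]}\)), you always have \(L_{B_n}(u)_n\le L_{B_n}(w)_n\). So the difficulty sits in the first \(n-1\) coordinates. The rbg hypothesis must be used to prove \(u_J\le w_J\) for all \(u\le w\), and your outline gives no argument for this.

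\textbf{What the paper does, and one way to close the gap.} The paper does not prove this by hand. It obtains (i)\(\Leftrightarrow\)(ii) from the Bolognini--Sentinelli identification of \(L_{B_n}\)-principal elements with Kempf elements, translating Lakshmibai's condition~\((K)\) into the rbg inequality (Proposition~\ref{prop:kempf-rbg}). It obtains (ii)\(\Leftrightarrow\)(iii) from the structural Lemma~\ref{lem:rbg-insertion-characterization}.

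To keep your route self-contained, show that, for rbg \(w\), every \(s_j\) with \(j\le n-2\) occurring in the reduced word of \(w^{(n)}\) is a right descent of \(w_J\). By the rbg bound there are three cases:
\begin{itemize}
\item If \(n-1\) is positive, such \(j\) lie in \([p_{n-1},n-2]\). Here \(w_J\) decreases after its maximum \(n-1\), since an ascent there would create \(312\), \(3\sbar12\), \(3\sbar2\sbar1\), or \(\sbar21\).
\item If \(\overline{n-1}\) sits at position \(t<n-1\), such \(j\) lie in \([t+1,n-2]\), and \(w_J\) ends with \(\sbar t\cdots\overline{n-2}\).
\item Otherwise \(w_J\) is the longest element of \(B_{n-1}\).
\end{itemize}
The lifting property then makes \([\mathrm{id},w_J]\) stable under right multiplication by the parabolic subgroup generated by these \(s_j\). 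Write \(u=xv\) from a reduced subword of \(\mathbf c(w)\), with \(x\le w_J\) and \(v\) coming from the last factor. Then \(v_J\) lies in that subgroup, which gives \(u_J=xv_J\le w_J\).

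\textbf{On your (i)\(\Rightarrow\)(ii).} This strategy works and is more elementary than the paper's appeal to Lakshmibai, but its second case is misdescribed. When the violation is not already of the first kind, \(a_{i-1}\ge i-1\), so it is the \((i-1)\)-st factor that passes through \(0\), not the \(i\)-th. No braid relation is needed: append the letter \(a_{i-1}-i+2\), which occurs in the \(i\)-th factor, to the \((i-1)\)-st factor. This gives a canonical word of some \(u\le w\) with \(L_{B_n}(u)_{i-1}=a_{i-1}+1\). In the first case, when \(a_i=2i-1\), you must also drop the final letter \(i-1\).
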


We prove the two equivalences in
Theorem~\ref{thm:equivalence of rbg codes} separately. The first
identifies the type~\(B\) family with the \(L_{B_n}\)-principal elements
and follows by translating Lakshmibai's definition of Kempf elements.

\begin{proposition}\label{prop:kempf-rbg}
    Let \(w\in B_n\). Then \(w\) is \(L_{B_n}\)-principal if and only if
    \(L_{B_n}(w)\) is an rbg tuple.
\end{proposition}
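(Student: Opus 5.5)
Since the Kempf elements of \(B_n\) coincide with the \(L_{B_n}\)-principal elements \cite[Proposition~4.9]{HL85}, the plan is to take Lakshmibai's condition \((K)\) on the canonical factorization \cite[Proposition~B.1]{Lak76} and translate it, factor by factor, into a condition on the code \((a_1,\ldots,a_n)=L_{B_n}(w)\). Her factorization uses the reversed indexing of the generators, so I first rewrite each factor \(w^{(k)}\) as the prefix of \(\mathbf i_{[-(k-1),k-1]}\) of length \(a_k\). Condition \((K)\) constrains each consecutive pair of factors, which suggests the target: for each \(2\le i\le n\), \((K)\) at step \(i\) should hold exactly when either \(a_{i-1}\ge 2i-3\) or \(a_i\le\min\{a_{i-1}+1,\,2i-3-a_{i-1}\}\).

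Because I distrust a purely bibliographic translation, I would also give a direct argument as a cross-check. \emph{Necessity:} assume \(w\) is principal and that the reflected bound fails at some \(i\). I then build \(u\le w\) with \(L(u)\not\preceq L(w)\). By Lemma~\ref{lem:LBn_insertion_description}, changing only the codes \(a_{i-1},a_i\) alters just the positions of \(\pm(i-1),\pm i\) inside the word on \(\pm1,\ldots,\pm i\). If \(a_i>a_{i-1}+1\) with \(a_{i-1}<2i-3\), I take \(u\) with code \((\ldots,a_{i-1}+1,a_i-1,\ldots)\), using a subword of \(\mathbf c(w)\) in which a letter is moved from factor \(i\) into factor \(i-1\). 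If instead \(a_i>2i-3-a_{i-1}\), I use the reflection: write the \(i\)-th factor as \(\mathbf i_{[-(i-1),m]}\) with \(m\ge0\), delete a suitable letter so that the result, after braid relations, has a larger \((i-1)\)-th factor. In either case it suffices to verify \(u\le w\) in one-line notation through the tableau criterion for \(B_n\).

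\emph{Sufficiency:} assume \(L(w)\) is rbg. I argue by induction on \(n\), using that restricting to \(\pm1,\ldots,\pm(n-1)\) gives an rbg \((n-1)\)-tuple. The aim is to show \([\mathrm{id},w]=L^{-1}[0_n,L(w)]\). The inclusion \(\supseteq\) holds automatically because \(L^{-1}\) is order-preserving, so only the cardinality \(\lvert[\mathrm{id},w]\rvert\le\prod(a_i+1)\) remains. To obtain it, I factor \(w=w'\,w^{(n)}\) and plan to show that every \(u\le w\) factors as \(u=u'v\) with \(u'\le w'\) and \(v\) a prefix of the \(n\)-th canonical word. The rbg condition is exactly what should let me push letters of any subword back into earlier factors, keeping the subword canonical.

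The main obstacle is this last exchange step, namely showing that subwords of \(\mathbf c(w)\) collapse to canonical words whose codes are componentwise bounded. Letters crossing the \(s_0s_1s_0s_1\) relation make the reflected term \(2i-3-a_{i-1}\) appear. My first attempt is a case analysis on whether \(a_{i-1}\) lies in \([0,i-2]\), in \([i-1,2i-4]\), or equals \(2i-3\); if that becomes unwieldy, I would instead rely entirely on the translation of Lakshmibai's condition \((K)\).
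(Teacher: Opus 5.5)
Your main route is the paper's: identify \(L_{B_n}\)-principal elements with Kempf elements, then translate Lakshmibai's condition \((K)\). However, you only state what the translation ``should'' give, and that translation is the entire proof. Three things need to be checked.
\begin{enumerate}
\item Under \(\sigma_i=s_{n-i}\), Lakshmibai's factor \(u'_i\) is the canonical factor \(w^{(k)}\) with \(k=n-i+1\), so \(a_k=\ell(u'_i)\).
\item The admissible \(u'_i\) are exactly the prefixes of the single reduced word \(u_i\). Hence every Bruhat inequality in \((K)\) is just an inequality of lengths.
\item The case analysis. If \(u'_{i+1}=\mathrm{id}\) or \(\sigma_{i+1}\cdots\sigma_j\), then \(0\leq a_{k-1}\leq k-2\) and \((K)\) says \(a_k\leq a_{k-1}+1\). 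If \(u'_{i+1}=\sigma_{i+1}\cdots\sigma_n\cdots\sigma_j\) with \(i+2\leq j\leq n\), then \(a_{k-1}=2n-i-j\) ranges over \([k-1,2k-4]\), and the bound \(\sigma_i\cdots\sigma_{j-2}\) has length \(j-i-1=2k-3-a_{k-1}\). If \(u'_{i+1}=u_{i+1}\), then \(a_{k-1}=2k-3\) and nothing is imposed.
\end{enumerate}
Finally, \(b+1\leq 2k-3-b\iff b\leq k-2\) merges the two bounds into the minimum. Without these computations the proposal is a plan rather than a proof.

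Your direct cross-check contains a step that fails outright. A witness \(u\) with code \((\ldots,a_{k-1}+1,a_k-1,\ldots)\) satisfies \(\ell(u)=\ell(w)\), because Lehmer codes are rank-preserving. So \(u\leq w\) would force \(u=w\), which is impossible since the codes differ. Concretely, in \(B_2\) the element \(w=s_1s_0\) has code \((0,2)\) and violates the reflected bound, while your candidate \(s_0s_1\), with code \((1,1)\), is incomparable to it.

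A witness that does work has code \((a_1,\ldots,a_{k-2},a_{k-1}+1,0,\ldots,0)\). To build it, keep the words of the first \(k-1\) canonical factors in \(\mathbf c(w)\). From the \(k\)-th factor keep only the letter that extends the reduced word of \(w^{(k-1)}\) inside \(\mathbf i_{[-(k-2),k-2]}\): this is \(k-2-a_{k-1}\) if \(a_{k-1}\leq k-2\), and \(a_{k-1}-k+2\) otherwise. The resulting subword has canonical shape, so it is reduced, and the Subword Property gives \(u\leq w\). That letter occurs in the \(k\)-th factor exactly when \(a_k>\min\{a_{k-1}+1,\,2k-3-a_{k-1}\}\). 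So both regimes are handled at once, with no braid moves needed.

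The sufficiency half of your direct argument is, as you acknowledge, left open. So the cross-check does not yet give an independent proof either.
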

\begin{proof}
The case \(n=1\) is immediate, so assume \(n\geq2\).
    By~\cite[Example~6.4]{BS25}, the \(L_{B_n}\)-principal elements are
    precisely the Kempf elements in~\(B_n\). We translate Lakshmibai's
    canonical factorization and condition~\((K)\) \cite[Proposition~B.1 and condition~\((K)\) on p.~324]{Lak76}.
    
    Write \(\sigma_1,\ldots,\sigma_n\) for the simple reflections in
    Lakshmibai's numbering, so that
    \[
        \sigma_i=s_{n-i}\qquad (1\leq i\leq n)
    \]
    in our numbering. Lakshmibai writes
    \[
        w=u_n'u_{n-1}'\cdots u_1',
    \]
    where \(u_i'\), for \(1\leq i\leq n\), is one of
    \[
        \mathrm{id},\quad
        \sigma_i\sigma_{i+1}\cdots\sigma_j
        \,(i\leq j\leq n-1),\quad
        \sigma_i\sigma_{i+1}\cdots\sigma_{n-1}\sigma_n\sigma_{n-1}\cdots\sigma_{j+1}\sigma_j
        \,(i\leq j\leq n).
    \]
    Here and below, the descending string after \(\sigma_n\) is understood to be empty when \(j=n\).
    These are precisely the prefixes of the reduced expression
    \[
        u_i=\sigma_i\sigma_{i+1}\cdots\sigma_{n-1}\sigma_n\sigma_{n-1}\cdots\sigma_{i+1}\sigma_i.
    \]
    Hence they form a chain in Bruhat order, with one element of each
    length from \(0\) to \(2(n-i)+1\).
    
    Under our identification, the factors
    \(u_n',\ldots,u_1'\) correspond, in the same order, to the factors
    \(w^{(1)},\ldots,w^{(n)}\) in the canonical decomposition of \(w\).
    Thus, if
    \[
        L_{B_n}(w)=(a_1,\ldots,a_n)
    \]
    and \(k=n-i+1\), then
    \[
        a_k=\ell(u_i').
    \]
    
    By definition, \(w\) is a Kempf element if and only if it satisfies condition~\((K)\):
    \begin{enumerate}
        \item \(u'_n\) is arbitrary, that is, \(u'_n\in\{\mathrm{id},\sigma_n\}\).
        \item For \(1\leq i\leq n-1\):
        \begin{enumerate}
            \item \(u'_i\) is arbitrary if \(u'_{i+1}=u_{i+1}\).
            \item \(u'_i
            \leq \sigma_i\sigma_{i+1}\cdots\sigma_j\) if \(u'_{i+1}=\sigma_{i+1}\sigma_{i+2}\cdots\sigma_j \quad(i+1\leq j\leq n-1)\).
            \item \(u'_i
            \leq \sigma_i\sigma_{i+1}\cdots\sigma_{j-2}\) if
            \begin{equation*}
                u'_{i+1}=\sigma_{i+1}\sigma_{i+2}\cdots\sigma_{n-1}\sigma_n\sigma_{n-1}\cdots\sigma_{j+1}\sigma_j \quad(i+2\leq j\leq n).
            \end{equation*}
            \item \(u'_i\leq \sigma_i\) if \(u'_{i+1}=\mathrm{id}\).
        \end{enumerate}
    \end{enumerate}
    Since Bruhat order on each of these chains is determined by length,
    we can translate condition~\((K)\) directly. For \(2\leq k\leq n\),
    put \(b=a_{k-1}\). If \(b<2k-3\), condition~\((K)\) becomes
    \[
        a_k\leq
        \begin{cases}
            b+1, & 0\leq b\leq k-2,\\
            2k-3-b, & k-1\leq b\leq2k-4.
        \end{cases}
    \]
    Since
    \[
        b+1\leq2k-3-b
        \quad\Longleftrightarrow\quad
        b\leq k-2,
    \]
    this is equivalent to
    \[
        a_{k-1}<2k-3
        \quad\Longrightarrow\quad
        a_k\leq
        \min\{a_{k-1}+1,\,2k-3-a_{k-1}\}.
    \]
    For \(2\leq k\leq n\), if \(a_{k-1}=2k-3\), then
    condition~\((K)\) imposes no restriction on \(a_k\). For \(k=1\),
    condition~\((K)\) is automatic.
    
    Together with the \(L_{B_n}\)-code bounds
    \[
        0\leq a_k\leq2k-1\qquad(1\leq k\leq n),
    \]
    this shows that condition~\((K)\) is exactly the rbg condition.
\end{proof}

It remains to characterize rbg codes by pattern avoidance.
The following lemma reformulates the rbg inequalities using the insertion procedure from Lemma~\ref{lem:LBn_insertion_description}.

\begin{lemma}\label{lem:rbg-insertion-characterization}
    Let \(w\in B_n\) and let \(p_k\) be the position in which \(k\) or \(\sbar k\) is
    inserted at step~\(k\).
    Set \(r:=\ell_0(w)\) and, if \(r>0\), set \(t:=p_r\).
    Then \(L_{B_n}(w)\) is an rbg
    tuple if and only if the following conditions hold:
    \begin{enumerate}[
        label=\textup{(\roman*)},
        ref=\textup{(\roman*)},
        leftmargin=*
        ]
        \item\label{item:rbg-negative-subword}
        If \(r>0\), the absolute values of the negative entries of \(w\)
        are \(\{1,\ldots,r\}\), and their subword is obtained from
        \begin{equation*}
            \sbar1\,\sbar2\,\cdots\,\sbar r
        \end{equation*}
        by moving \(\sbar r\) to position \(t\).

        \item\label{item:rbg-positive-positions}
        For every \(r<k<n\),
        \begin{equation*}
            p_k\leq p_{k+1}.
        \end{equation*}

        \item\label{item:rbg-first-positive}
        If \(0<r<n\) and \(t<r\), then
        \begin{equation*}
            p_{r+1}\geq t+1.
        \end{equation*}
    \end{enumerate}
\end{lemma}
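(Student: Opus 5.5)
The plan is to translate each rbg inequality, one index at a time, into a statement about insertion positions, using Lemma~\ref{lem:LBn_insertion_description}. Write \(\mathbf a=L_{B_n}(w)=(a_1,\ldots,a_n)\). That lemma gives the dictionary
\[
    a_k=k-p_k \text{ when } k \text{ is inserted positive},
    \qquad
    a_k=k-1+p_k \text{ when } \sbar k \text{ is inserted}.
\]
In particular \(a_k=2k-1\) exactly when \(\sbar k\) is inserted in the last position \(k\). Every later insertion preserves the relative order and signs of the existing entries. Hence the subword of negative entries of \(w\) is determined by the steps at which negative letters are inserted, and by their positions relative to earlier negative letters.

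\emph{Forward direction.} I first show that in an rbg tuple negative insertions can only occur at an initial run of steps. Suppose \(k\geq2\), \(a_{k-1}<2k-3\), and \(a_k\geq k\). The bound \(a_k\leq a_{k-1}+1\) forces \(a_{k-1}\geq k-1\). The reflected bound \(a_k\leq 2k-3-a_{k-1}\) forces \(a_{k-1}\leq k-3\). These are incompatible, so a negative insertion at step \(k\geq2\) requires \(a_{k-1}=2k-3\). That value means \(\sbar{(k-1)}\) was inserted negative in the last position.

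An induction then gives the structure. If \(r=\ell_0(w)>0\), the negative steps are exactly \(1,\ldots,r\), and \(a_k=2k-1\) for \(k<r\). So \(\sbar1,\ldots,\sbar{(r-1)}\) are successively appended at the right end, and \(\sbar r\) is then inserted at position \(t=p_r\). This gives condition~\ref{item:rbg-negative-subword}. Here one uses that \(a_1\in\{0,1\}\), and that \(a_1=0\) forces \(r=0\) by the same argument.

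For \(k\geq r+2\), the letter \(k-1\) is positive, so \(a_{k-1}\leq k-2<2k-3\). In this range the minimum equals \(a_{k-1}+1\). Then \(a_k\leq a_{k-1}+1\) reads \(k-p_k\leq k-p_{k-1}\), that is, \(p_{k-1}\leq p_k\), which is condition~\ref{item:rbg-positive-positions}.

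For \(k=r+1\) with \(0<r<n\), we have \(a_r=r-1+t\).
\begin{itemize}
    \item If \(t=r\), then \(a_r=2r-1\) and there is no restriction.
    \item If \(t<r\), then \(a_r\geq r\), so the minimum is \(2r-1-a_r=r-t\). The inequality \(r+1-p_{r+1}\leq r-t\) is condition~\ref{item:rbg-first-positive}.
\end{itemize}

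\emph{Converse.} Conversely, assume \ref{item:rbg-negative-subword}--\ref{item:rbg-first-positive}. Condition~\ref{item:rbg-negative-subword} and the dictionary recover the entries: \(a_k=2k-1\) for \(k<r\), \(a_r=r-1+t\), and \(a_k<k\) for \(k>r\). The range condition \(0\leq a_k\leq 2k-1\) is automatic. Checking the reflected bound at each \(k\geq 2\) then runs the above computations backwards.
\begin{itemize}
    \item For \(k\leq r\), the hypothesis \(a_{k-1}<2k-3\) never holds.
    \item For \(k=r+1\), the bound is exactly condition~\ref{item:rbg-first-positive}.
    \item For \(k\geq r+2\), it is condition~\ref{item:rbg-positive-positions}, since the minimum is \(a_{k-1}+1\) there.
    \item When \(r=0\), we have \(a_1=0\), and every step \(k\geq2\) falls under the previous item.
\end{itemize}

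The only delicate point is the first step: extracting condition~\ref{item:rbg-negative-subword}, and conversely showing that \ref{item:rbg-negative-subword} pins down \(a_1,\ldots,a_r\). Recovering the codes from the subword shape requires the observation that later insertions do not disturb the relative order of earlier letters. Using it, I will show that the stated shape of the negative subword forces each \(\sbar k\) with \(k<r\) to have been inserted last at its step. The edge cases \(r=n\) and \(t=r\) also need separate attention.
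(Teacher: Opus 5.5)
Your proposal is correct and follows essentially the same route as the paper. In both, an rbg tuple forces \(a_{k-1}=2k-3\) whenever \(\sbar k\) is inserted at step \(k\geq 2\); hence the negative letters are \(\sbar1,\dots,\sbar r\) with \(a_i=2i-1\) for \(i<r\), and via \(a_k=k-p_k\) the rbg inequality becomes condition~(iii) at \(k=r+1\) and condition~(ii) at \(k\geq r+2\), while the converse recovers \(a_1,\dots,a_r\) from condition~(i) and reverses these computations (your remark that deleting \(\sbar r\) from the negative subword shows each \(\sbar k\), \(k<r\), was inserted last is exactly the justification the paper uses tacitly there).
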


\begin{proof}
    Suppose first that \(L_{B_n}(w)=(a_1,\ldots,a_n)\) is an rbg tuple. By
    Lemma~\ref{lem:LBn_insertion_description}, \(a_k\geq k\) precisely
    when \(k\) is inserted negatively. If \(a_k\geq k\) for \(k\geq2\), then \(a_{k-1}=2k-3\).
    Indeed, if \(a_{k-1}<2k-3\), the rbg condition would give
    \(a_k\leq k-1\).
    Thus the set of absolute values of the negative entries is an initial
    segment of \(\{1,\ldots,n\}\). Since it has cardinality \(r\), it is
    \(\{1,\ldots,r\}\).

    The preceding argument also gives
    \begin{equation*}
        a_i=2i-1
        \qquad
        (1\leq i<r).
    \end{equation*}
    By Lemma~\ref{lem:LBn_insertion_description}, \(\sbar1,\ldots,\sbar{r-1}\) are inserted successively at the end.
    Since \(t=p_r\), the entry \(\sbar r\) is inserted in position
    \(t\), proving condition~\ref{item:rbg-negative-subword}.

    For \(k>r\), \(k\) is inserted positively.
    By Lemma~\ref{lem:LBn_insertion_description},
    \begin{equation*}
        a_k=k-p_k.
    \end{equation*}
    For \(r<k<n\), since \(a_k<k\), the rbg inequality at \(k+1\)
    becomes \(a_{k+1}\leq a_k+1\), which is equivalent to
    \(p_{k+1}\geq p_k\). This proves
    condition~\ref{item:rbg-positive-positions}.

    Finally, suppose that \(0<r<n\) and \(t<r\). Since
    \(a_r=r+t-1<2r-1\), the rbg inequality at \(r+1\) gives
    \begin{equation*}
        a_{r+1}\leq2r-1-a_r=r-t.
    \end{equation*}
    Since \(a_{r+1}=r+1-p_{r+1}\), this is equivalent to
    \(p_{r+1}\geq t+1\), proving
    condition~\ref{item:rbg-first-positive}.

    Conversely, suppose that
    conditions~\ref{item:rbg-negative-subword}--%
    \ref{item:rbg-first-positive} hold.
    
    If \(r>0\), condition~\ref{item:rbg-negative-subword} shows that,
    for \(i<r\), the entry \(\sbar i\) is inserted at the end, whereas
    \(\sbar r\) is inserted in position \(t\). Therefore,
    \begin{equation*}
        a_i=2i-1\quad(1\leq i<r),
        \qquad
        a_r=r+t-1.
    \end{equation*}
    By the definition of \(r\) and
    condition~\ref{item:rbg-negative-subword}, every entry \(k>r\) is
    positive, so
    \begin{equation*}
        a_k=k-p_k
        \qquad
        (k>r).
    \end{equation*}

    The maximal values \(a_i=2i-1\) impose no rbg restriction on the
    following coordinate. If \(0<r<n\) and \(t=r\), neither does
    \(a_r=2r-1\). If \(0<r<n\) and \(t<r\), then condition~\ref{item:rbg-first-positive} gives
    \begin{equation*}
        a_{r+1}
        \leq r-t
        =2r-1-a_r
        \leq a_r+1,
    \end{equation*}
    which is the rbg inequality at \(r+1\).

    For \(r<k<n\), condition~\ref{item:rbg-positive-positions} gives
    \(a_{k+1}\leq a_k+1\). Since \(a_k<k\),
    \begin{equation*}
        a_k+1\leq2k-1-a_k,
    \end{equation*}
    so this is precisely the rbg inequality at \(k+1\).

    The bounds \(0\leq a_i\leq 2i-1\) are automatic for an
    \(L_{B_n}\)-code.
    Therefore, \((a_1,\ldots,a_n)\) is an rbg tuple.
\end{proof}

\begin{example}
    Let \(w\in B_6\) satisfy
    \begin{equation*}
        L_{B_6}(w)=(1,3,3,2,1,2)\in\mathcal C_6.
    \end{equation*}
    The corresponding insertion procedure is
    \begin{equation*}
            \sbar1
            \longrightarrow \sbar1\sbar2
            \longrightarrow \sbar3\sbar1\sbar2
            \longrightarrow \sbar3 4\sbar1\sbar2
            \longrightarrow \sbar3 4\sbar1 5\sbar2
            \longrightarrow \sbar3 4\sbar1 6 5\sbar2.
    \end{equation*}
    Thus \(w=\sbar3 4\sbar1 6 5\sbar2\).

    In the notation of
    Lemma~\ref{lem:rbg-insertion-characterization}, we have
    \(r=3\), \(t=1\), and
    \begin{equation*}
        (p_1,p_2,p_3,p_4,p_5,p_6)=(1,2,1,2,4,4).
    \end{equation*}
    Moreover, \(p_4=2=t+1\), so condition~\ref{item:rbg-first-positive} holds with equality.
\end{example}

We will use the following elementary insertion criterion.

\begin{lemma}\label{lem:312-insertion-positions}
    Let \(v\in S_m\) be obtained by successively inserting the new maximum \(k\)
    in position \(q_k\). Then \(v\) avoids \(312\) if and only if
    \begin{equation*}
        q_1\leq q_2\leq\cdots\leq q_m.
    \end{equation*}
\end{lemma}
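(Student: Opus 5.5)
I would prove both directions by induction on \(m\), tracking how an occurrence of \(312\) relates to the insertion positions. The basic fact is this. Let \(v^{(k)}\) be the permutation of \(\{1,\ldots,k\}\) obtained after step \(k\). Later insertions only place larger values into the word, so the relative order of \(1,\ldots,k\) in \(v\) equals \(v^{(k)}\). Hence \(v\) avoids \(312\) if and only if every \(v^{(k)}\) avoids \(312\), and this in turn holds if and only if \(v^{(m-1)}\) avoids \(312\) and inserting \(m\) at position \(q_m\) creates no occurrence of \(312\) that uses \(m\) as the ``3''.

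The key step is the inductive one: assuming \(v^{(m-1)}\) avoids \(312\) and \(q_1\leq\cdots\leq q_{m-1}\), I will show that inserting \(m\) at position \(q_m\) creates no \(312\) if and only if \(q_m\geq q_{m-1}\). A new occurrence with \(m\) as the largest entry is exactly an ascent pair \(a<b\), with \(a\) before \(b\), both lying to the right of position \(q_m\). So the condition is that the entries to the right of \(m\) form a decreasing sequence.

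Next I describe the suffix of \(v^{(m-1)}\). Since the positions are weakly increasing, each new maximum \(k\) is placed at position \(q_k\geq q_{k-1}\), that is, weakly to the right of the previous maximum. The entries to the right of \(k-1\) at step \(k-1\) are therefore decreasing by induction. Inserting \(k\) to the right of \(k-1\) keeps the portion after \(k\) decreasing, because it is a suffix of a decreasing portion. So in \(v^{(m-1)}\), the entries strictly to the right of \(m-1\) form a decreasing sequence, and \(m-1\) sits at position \(q_{m-1}\).

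If \(q_m\geq q_{m-1}\), then \(m\) is inserted after \(m-1\) (position \(q_m\geq q_{m-1}\) puts \(m\) in front of the entry currently at that position, which is \(m-1\) or something to its right). Everything to the right of \(m\) is then a suffix of the decreasing tail, so no \(312\) occurs. If \(q_m<q_{m-1}\), then \(m-1\) lies to the right of \(m\). I need some \(a<m-1\) located between \(m\) and \(m-1\), which gives the pattern \(m\,a\,(m-1)\).

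That second case is the delicate step. If the entries between \(m\) and \(m-1\) were all absent, so that \(m\) is immediately before \(m-1\) with \(q_m=q_{m-1}\), this contradicts \(q_m<q_{m-1}\). So there is at least one entry between them, and it is automatically smaller than \(m-1\). This yields the \(312\).

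For the converse direction, take the first index \(k\) with \(q_k<q_{k-1}\). Then \(v^{(k)}\) contains \(312\) by the argument just given, and hence so does \(v\). The main care point is being precise about the convention that inserting at position \(q\) puts the new entry before the old entry at position \(q\), which is where the equality case \(q_m=q_{m-1}\) lives.
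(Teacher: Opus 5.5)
Your proof is correct. The direction ``some \(q_k<q_{k-1}\) forces a \(312\)'' is the same as the paper's argument. The entry \(k\) lands at position \(q_k\), the previous maximum \(k-1\) is pushed to position \(q_{k-1}+1\), and the \(q_{k-1}-q_k\geq 1\) entries between them supply the ``1''.

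For the other direction you take a different route. The paper argues by contradiction from a fixed occurrence \(x,y,z\) of \(312\), with values \(x>z>y\). When \(z\) is inserted, \(y\) already sits strictly to the left of position \(q_z\). Every later insertion position is at least \(q_z\), so \(y\) is never displaced and every later value, \(x\) included, is inserted to its right. You instead induct on \(m\) and note that a new occurrence must use \(m\) as its ``3'', so it is just an ascent to the right of \(m\). You then maintain the invariant that the entries to the right of the current maximum are decreasing.

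The paper's argument is shorter and deals with a single occurrence directly. Yours localizes every check to the newest entry. It also records the extra structural fact that, with weakly increasing positions, the suffix of each \(v^{(k)}\) beginning at \(k\) is decreasing.

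One wording point in your invariant paragraph needs fixing. When \(q_k=q_{k-1}\), the entry \(k\) lands immediately \emph{before} \(k-1\). So the part to the right of \(k\) is \(k-1\) followed by the old tail, not a suffix of the old tail. It is still decreasing, because \(k-1\) is the maximum of \(v^{(k-1)}\). Your parenthetical in the case \(q_m\geq q_{m-1}\) shows you know this. Still, the phrases ``inserting \(k\) to the right of \(k-1\)'' and ``a suffix of the decreasing tail'' should be stated with the tail taken to include \(k-1\).
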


\begin{proof}
    Suppose first that the insertion positions are not weakly increasing. Then
    \(q_{k+1}<q_k\) for some \(k\). Immediately before inserting \(k+1\), the
    entry \(k\) is in position \(q_k\). After inserting \(k+1\), some entry
    \(y<k\) lies between \(k+1\) and \(k\). Thus \(k+1,y,k\) is a
    \(312\)-pattern.
    
    Conversely, suppose that the insertion positions are weakly increasing
    and that \(v\) contains a \(312\)-pattern with values \(x>z>y\)
    occurring in the order \(x,y,z\). When \(z\) is inserted, \(y\) is
    already present. Since \(y\) precedes \(z\) in \(v\), it must lie to
    the left of position \(q_z\) at that moment.
    For every \(k>z\), we have \(q_k\geq q_z\), so \(k\) is inserted
    to the right of \(y\).
    In particular, \(x\) is inserted to the right
    of \(y\), a contradiction.
\end{proof}

\begin{proposition}\label{prop:rbg-pattern-avoidance}
Let \(w\in B_n\).
Then \(L_{B_n}(w)\) is an rbg tuple if and only if \(w\) avoids the eight patterns in~\eqref{eq: 8 bad patterns}.
\end{proposition}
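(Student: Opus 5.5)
The plan is to use Lemma~\ref{lem:rbg-insertion-characterization} as the bridge and prove that conditions \ref{item:rbg-negative-subword}--\ref{item:rbg-first-positive} hold if and only if \(w\) avoids the eight patterns in~\eqref{eq: 8 bad patterns}. First I would check that each pattern, read as a signed permutation with its own insertion data, violates one of the three conditions. Then I would argue the two directions separately.

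\emph{Avoidance implies rbg.} The patterns \(\sbar21\) and \(1\sbar2\) forbid a negative entry whose absolute value exceeds that of some positive entry. Hence the negative entries have absolute values exactly \(\{1,\ldots,r\}\).

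The patterns \(\sbar3\sbar2\sbar1\), \(\sbar2\sbar3\sbar1\) and \(\sbar2\sbar1\sbar3\) constrain the order of the negative subword. I expect to show that the admissible orders of \(\sbar1,\ldots,\sbar r\) are exactly those obtained from \(\sbar1\cdots\sbar r\) by moving \(\sbar r\) to some position. Any other arrangement has a descent, in absolute value, among entries other than \(\sbar r\), and hence contains one of these three patterns. This step is a small case analysis on \(B_r\) words, to be done by induction on \(r\). This gives condition~\ref{item:rbg-negative-subword}.

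For condition~\ref{item:rbg-positive-positions}, restrict attention to the positive entries \(r+1,\ldots,n\). I would like to apply Lemma~\ref{lem:312-insertion-positions}. The difficulty is that the \(p_k\) are positions among all entries, including the negative ones and the smaller positive ones, not only among \(r+1,\ldots,n\). So I will argue directly, as in the proof of that lemma: if \(p_{k+1}<p_k\) for some \(k>r\), then some entry \(y\) lies strictly between \(k+1\) and \(k\). If \(y\) is positive we get \(312\); if \(y\) is negative we get \(3\sbar12\) (or \(3\sbar21\)-type configurations, standardizing to \(3\sbar12\)).

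For condition~\ref{item:rbg-first-positive}, suppose \(t<r\) and \(p_{r+1}\le t\). Then \(r+1\) precedes \(\sbar r\), which precedes \(\sbar1\), or the relevant pair of small negatives. This produces \(3\sbar2\sbar1\), which is exactly what that pattern is designed to detect.

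\emph{Rbg implies avoidance.} I would assume the three conditions and suppose that an occurrence of one of the eight patterns exists, deriving a contradiction case by case. The two \(B_2\) patterns contradict \ref{item:rbg-negative-subword}, since they require a positive entry smaller in absolute value than a negative one. The three all-negative patterns are incompatible with the ``move \(\sbar r\)'' shape. The pattern \(3\sbar2\sbar1\) needs a positive entry before two negatives that appear in decreasing order of absolute value. The latter forces those negatives to be \(\sbar r\) and \(\sbar j\) with \(t<r\), so the positive entry sits at position at most \(t\). Then, using the monotonicity \ref{item:rbg-positive-positions} together with the positions of later insertions, the entry \(r+1\) would also have been inserted at position at most \(t\), contradicting \ref{item:rbg-first-positive}. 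For \(312\) and \(3\sbar12\), I would run the converse argument of Lemma~\ref{lem:312-insertion-positions}: the middle entry \(y\) is already present when \(z\) is inserted, and weakly increasing positions force \(x\) to the right of \(y\).

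The main obstacle is the bookkeeping in the \(3\sbar2\sbar1\) case and in condition~\ref{item:rbg-positive-positions}. In both, the insertion positions of later positive entries shift relative to earlier entries, so I must carefully track that an entry placed before \(\sbar r\) remains before it. I would handle this by proving one auxiliary claim: under \ref{item:rbg-positive-positions}, the set of entries to the left of any fixed earlier entry only grows by entries smaller than later insertions.
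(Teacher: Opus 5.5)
Your proposal is correct and follows the paper's route. Both arguments reduce to Lemma~\ref{lem:rbg-insertion-characterization} and pair $\sbar21,1\sbar2$ with the value split in~\ref{item:rbg-negative-subword}, the three all-negative patterns with the ``move $\sbar r$'' shape, $312$ and $3\sbar12$ with~\ref{item:rbg-positive-positions}, and $3\sbar2\sbar1$ with~\ref{item:rbg-first-positive}. The only difference is that you treat the $p_k$ directly (an entry of smaller absolute value trapped between $k+1$ and $k$, and the argument of Lemma~\ref{lem:312-insertion-positions} run on the whole signed word), whereas the paper splits $p_k=q_k+b_k$.

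Two small repairs are needed. In the $3\sbar2\sbar1$ case, ``the positive entry sits at position at most $t$'' holds only if you take the smallest positive entry preceding $\sbar r$; alternatively, argue as the paper does that $p_{r+1}\geq t+1$ together with~\ref{item:rbg-positive-positions} keeps every later positive entry to the right of $\sbar r$. In your derivation of~\ref{item:rbg-first-positive}, the negative entry following $\sbar r$ is $\sbar t$, not necessarily $\sbar1$.
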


\begin{proof}
    \(\boldsymbol{(\Longrightarrow)}\) Suppose first that \(L_{B_n}(w)\) is an rbg tuple, and let \(r,t\), and
    \(p_k\) be as in Lemma~\ref{lem:rbg-insertion-characterization}.
    Thus, \(w\) satisfies conditions \ref{item:rbg-negative-subword}, \ref{item:rbg-positive-positions}, and \ref{item:rbg-first-positive} of Lemma~\ref{lem:rbg-insertion-characterization}.
    
    If \(r=0\), by condition~\ref{item:rbg-positive-positions} and Lemma~\ref{lem:312-insertion-positions}, \(w\) avoids \(312\), and since there are no negative entries, it avoids all the bad patterns.
    
    Suppose \(r>0\). By condition~\ref{item:rbg-negative-subword}, every negative entry has absolute value at most \(r\), whereas every positive entry has absolute value greater than \(r\).
    Thus \(w\) avoids
    \begin{equation*}
        \sbar21
        \qquad\text{and}\qquad
        1\sbar2.
    \end{equation*}
    From the description of the negative subword in condition~\ref{item:rbg-negative-subword}, it avoids
    \begin{equation*}
        \sbar3\sbar2\sbar1,
        \qquad
        \sbar2\sbar3\sbar1,
        \qquad
        \sbar2\sbar1\sbar3.
    \end{equation*}
    Hence \(w\) does.
    
    For \(k>r\), let \(q_k\) be the insertion position of \(k\) within the
    positive subword, and let \(b_k\) be the number of negative entries to
    the left of \(k\). We claim that both \((q_k)_{k>r}\) and
    \((b_k)_{k>r}\) are weakly increasing. Indeed, fix
    \(r<k<\ell\leq n\). By
    condition~\ref{item:rbg-positive-positions},
    \begin{equation*}
        p_k\leq p_{k+1}\leq\cdots\leq p_\ell.
    \end{equation*}
    Consequently, every entry preceding position \(p_k\) at step \(k\)
    still precedes the position in which \(\ell\) is inserted. Counting
    the positive and negative entries in this prefix gives
    \begin{equation*}
        q_k\leq q_\ell
        \qquad\text{and}\qquad
        b_k\leq b_\ell.
    \end{equation*}
    
    The monotonicity of the \(q_k\)'s and Lemma~\ref{lem:312-insertion-positions} show that the positive subword
    of \(w\) avoids \(312\), and hence so does \(w\). If \(w\) contained
    \(3\sbar12\), there would be positive entries \(\ell>k\) with
    \(\ell\) to the left of a negative entry and \(k\) to its right.
    This would give
    \begin{equation*}
        b_\ell<b_k,
    \end{equation*}
    contradicting \(b_k\leq b_\ell\). Thus \(w\) also avoids
    \(3\sbar12\).
    
    It remains to consider \(3\sbar2\sbar1\). In any occurrence of this
    pattern, the two negative entries form a pair whose absolute values
    decrease.
    By condition~\ref{item:rbg-negative-subword}, every such pair in the negative
    subword begins with \(\sbar r\). If \(t=r\), no such pair exists. If
    \(t<r\), condition~\ref{item:rbg-first-positive} gives
    \(p_{r+1}\geq p_r+1\), so \(r+1\) is inserted to the right of \(\sbar r\).
    Condition~\ref{item:rbg-positive-positions} then shows that every later
    positive entry is also inserted to the right of \(\sbar r\). Thus no
    positive entry can precede a decreasing pair of negative entries, and
    \(w\) avoids \(3\sbar2\sbar1\).

    \medskip
    \noindent \(\boldsymbol{(\Longleftarrow)}\) Conversely, suppose that \(w\) avoids the eight patterns.
    Let \(r:=\ell_0(w)\) and let \(p_k\) be the position where \(k\) or \(\sbar{k}\) is inserted in~\(w\).

    If \(r=0\), conditions~\ref{item:rbg-negative-subword} and~\ref{item:rbg-first-positive} are vacuous.
    Since there are no negative entries and \(w\) avoids \(312\), Lemma~\ref{lem:312-insertion-positions} implies condition~\ref{item:rbg-positive-positions}.
    By Lemma~\ref{lem:rbg-insertion-characterization}, \(L_{B_n}(w)\) is an rbg tuple.

    Suppose that \(r>0\).
    
    Avoidance of
    \(\sbar21\) and \(1\sbar2\) implies that every negative absolute value is
    smaller than every positive absolute value.
    Hence the negative entries have absolute values \(1,\ldots,r\).
    
    Let \(t\) be the position of \(\sbar r\) in the negative subword. Since the word at step \(r\) consists precisely of
    the negative entries \(\sbar1,\ldots,\sbar r\), and subsequent insertions are positive and do not change their relative order, we have \(t=p_r\).
    
    We claim that this subword is obtained from
    \begin{equation*}
        \sbar1\,\sbar2\,\cdots\,\sbar r
    \end{equation*}
    by moving \(\sbar r\) to position \(t\).
    
    Indeed, a descent in absolute value before \(\sbar r\) would produce
    an occurrence of \(\sbar2\sbar1\sbar3\), while a descent in absolute
    value after \(\sbar r\) would produce an occurrence of
    \(\sbar3\sbar2\sbar1\).
    Thus the entries on each side
    of \(\sbar r\) are increasing in absolute value. Moreover, if an entry
    before \(\sbar r\) had a larger absolute value than an entry after it,
    these three entries would form an occurrence of
    \(\sbar2\sbar3\sbar1\). Therefore every absolute value before
    \(\sbar r\) is smaller than every absolute value after it. Since the
    remaining absolute values are \(1,\ldots,r-1\), the claimed form follows.
    This proves condition~\ref{item:rbg-negative-subword}.
    
    For each \(k>r\), let \(q_k\) be the insertion position of \(k\) within
    the positive subword, and let \(b_k\) be the number of negative entries
    preceding the position in which \(k\) is inserted.
    Right after inserting \(k\), \(q_k-1\) positive entries and \(b_k\) negative entries appear at the left of \(k\).
    Hence
    \begin{equation*}
        p_k=q_k+b_k.
    \end{equation*}
    
    Avoidance of \(312\) and
    Lemma~\ref{lem:312-insertion-positions} imply
    \begin{equation*}
        q_{r+1}\leq q_{r+2}\leq\cdots\leq q_n.
    \end{equation*}
    
    Avoidance of \(3\sbar12\) implies
    \[
        b_{r+1}\leq b_{r+2}\leq\cdots\leq b_n.
    \]
    Indeed, otherwise \(b_x<b_y\) for some \(r<y<x\).
    Since all negative entries have already been inserted by step \(r\),
    \(b_x\) and \(b_y\) are also the numbers of negative entries to the
    left of \(x\) and \(y\), respectively, in \(w\).
    Thus some negative entry \(\sbar z\)
    lies to the right of \(x\) and to the left of \(y\).
    Since \(z\leq r<y<x\), the entries \(x,\sbar z,y\) form an occurrence of
    \(3\sbar12\), a contradiction.
    
    Therefore, \(p_k=q_k+b_k\) gives
    \begin{equation*}
        p_{r+1}\leq p_{r+2}\leq\cdots\leq p_n.
    \end{equation*}
    This proves condition~\ref{item:rbg-positive-positions}.
    
    Finally, suppose that \(0<r<n\) and \(t<r\). Then \(\sbar r\) precedes
    \(\sbar t\) in the negative subword.
    If a positive entry \(x\) occurred to the left of \(\sbar r\), then
    \(x,\sbar r,\sbar t\) would form an occurrence of
    \(3\sbar2\sbar1\). Thus every positive entry lies to the right of
    \(\sbar r\). At step \(r\), the entry \(\sbar r\) occupies position
    \(t\); therefore \(r+1\) must be inserted in a position strictly to
    its right, giving
    \begin{equation*}
        p_{r+1}\geq t+1.
    \end{equation*}
    All the conditions in
    Lemma~\ref{lem:rbg-insertion-characterization} now hold, so
    \(L_{B_n}(w)\) is an rbg tuple.
\end{proof}

\begin{proof}[Proof of Theorem~\ref{thm:equivalence of rbg codes}]
    The equivalence of~\ref{thm:equivalence of rbg codes-i} and
    \ref{thm:equivalence of rbg codes-ii} follows from
    Proposition~\ref{prop:kempf-rbg}, while the equivalence of
    \ref{thm:equivalence of rbg codes-ii} and
    \ref{thm:equivalence of rbg codes-iii} follows from
    Proposition~\ref{prop:rbg-pattern-avoidance}. Finally, since \(L_{B_n}\) is a
    bijection, Proposition~\ref{prop: counting rbg} gives
    \begin{equation*}
        \left\lvert\mathrm{Pr}(L_{B_n})\right\rvert
        =\lvert\mathcal C_n\rvert
        =C_n+C_{n+1}-1.\qedhere
    \end{equation*}
\end{proof}

For a set \(\mathcal B\) of signed permutation patterns, let
\begin{equation*}
    \operatorname{Av}_{B_n}(\mathcal B)
    :=
    \{w\in B_n:\text{\(w\) avoids every pattern in \(\mathcal B\)}\}.
\end{equation*}
Two sets \(\mathcal B\) and \(\mathcal B'\) of signed permutation
patterns are \emph{type~\(B\) Wilf equivalent} if
\begin{equation*}
    \bigl|\operatorname{Av}_{B_n}(\mathcal B)\bigr|
    =
    \bigl|\operatorname{Av}_{B_n}(\mathcal B')\bigr|
\end{equation*}
for every \(n\geq 1\).

We now introduce the two pattern sets that will be compared.
Let \(\mathcal B_8\) denote the set of eight patterns
in~\eqref{eq: 8 bad patterns}, and define
\begin{equation*}
    \mathcal B_{12}
    :=
    \{w_1w_2w_3\in B_3:w_1>w_2>w_3\}
    \cup
    \{
        \sbar{1}\sbar{2},
        \sbar{2}\sbar{1},
        \sbar{3}21,
        2\sbar{3}1
    \}.
\end{equation*}

\begin{corollary}
    The pattern sets \(\mathcal B_8\) and \(\mathcal B_{12}\) are
    type~\(B\) Wilf equivalent.
\end{corollary}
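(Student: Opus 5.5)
The plan is to compute \(\lvert\operatorname{Av}_{B_n}(\mathcal B_{12})\rvert\) directly and compare it with \(\lvert\operatorname{Av}_{B_n}(\mathcal B_8)\rvert\). The latter equals \(\lvert\mathrm{Pr}(L_{B_n})\rvert=C_n+C_{n+1}-1\) by Theorem~\ref{thm:equivalence of rbg codes}. So it suffices to prove that
\begin{equation*}
    \bigl|\operatorname{Av}_{B_n}(\mathcal B_{12})\bigr|=C_n+C_{n+1}-1
    \qquad(n\geq1).
\end{equation*}
The first observation is that avoiding \(\sbar1\sbar2\) and \(\sbar2\sbar1\) means \(\ell_0(w)\leq1\). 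By Theorem~\ref{thm: iso posets An to Bn intro}, this is the same as \(w\in\operatorname{bot}(A_n)=[\mathrm{id},b_0]\). I would therefore split \(\operatorname{Av}_{B_n}(\mathcal B_{12})\) according to \(\ell_0(w)\in\{0,1\}\).

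\emph{Case \(\ell_0(w)=0\).} Here \(w\in S_n\), and the only relevant pattern in \(\mathcal B_{12}\) is the unsigned pattern \(321\). The number of such elements is the classical count \(\lvert\operatorname{Av}_{S_n}(321)\rvert=C_n\). This matches the \(C_n\) rbg tuples with \(a_1=0\) from Lemma~\ref{lem: rbg starting at 0 are all lazy fubini words}.

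\emph{Case \(\ell_0(w)=1\).} Write \(w\) with unique negative entry \(\sbar k\) in position \(j\). In the signed order, \(\sbar k\) is smaller than every other entry. The patterns of \(\mathcal B_{12}\) then translate into four conditions:
\begin{itemize}
    \item decreasing triples among the positive entries (\(321\));
    \item decreasing triples ending at \(\sbar k\), i.e.\ a \(21\) among the positive entries to the left of \(\sbar k\);
    \item \(\sbar3 21\): a descent to the right of \(\sbar k\) among values smaller than \(k\);
    \item \(2\sbar31\): positive entries \(a\) left and \(b\) right of \(\sbar k\) with \(b<a<k\).
\end{itemize}
Consequently the positive entries before \(\sbar k\) are increasing, the remaining word avoids \(321\), and the values below \(k\) are constrained relative to \(\sbar k\). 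For fixed \((j,k)\) I would count these configurations by a first-return/ballot decomposition. The goal is to show that the total over all \((j,k)\) is \(C_{n+1}-1\).

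A cleaner route, which I would try first, is to find an explicit bijection between these elements and the \(321\)-avoiding permutations of \(\{0,\ldots,n\}\) other than the identity. The natural candidate is the inverse of \(\operatorname{bot}\): send \(w\) to \(x=k\,w_1\cdots w_n\) with \(\sbar k\) replaced by \(0\). The translated patterns above then become exactly \(321\)-patterns of \(x\) of various shapes. The missing shape (\(k\) followed by \(a>b\) with \(a,b<k\) both before the \(0\)) would need to be reconciled, possibly by composing with the symmetry \(\sigma\) or with reversal-complement on the positive part.

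The main obstacle is this second case: getting the count \(C_{n+1}-1\) exactly right. If the bijection does not close up, I would fall back on a recurrence. This would condition on the position of \(\sbar k\) and on the increasing prefix. I would verify that it reproduces the Catalan-triangle recurrence \eqref{eq:Gn-catalan-recurrence} used in Proposition~\ref{prop: counting rbg}, which gives \(C_{n+1}-1\) after subtracting one boundary term. I would check small cases (\(n=1,2,3\), giving \(2,6,18\), the last matching \(\lvert\mathcal C_3\rvert=18\)) before committing. Adding the two cases then gives \(C_n+C_{n+1}-1\), proving Wilf equivalence.
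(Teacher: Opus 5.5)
Your reduction is sound. Theorem~\ref{thm:equivalence of rbg codes} gives \(\lvert\operatorname{Av}_{B_n}(\mathcal B_8)\rvert=C_n+C_{n+1}-1\), avoidance of \(\sbar1\sbar2\) and \(\sbar2\sbar1\) forces \(\ell_0(w)\leq1\), and the elements with \(\ell_0(w)=0\) are the \(321\)-avoiding elements of \(S_n\), of which there are \(C_n\).

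\textbf{The gap.} The whole content of the corollary is then the claim that exactly \(C_{n+1}-1\) elements of \(\operatorname{Av}_{B_n}(\mathcal B_{12})\) have \(\ell_0(w)=1\). Your proposal only states this as a goal. The bijection you suggest cannot work as stated. The inverse of \(\operatorname{bot}\) sends such a \(w\), with negative entry \(\sbar k\), to some \(x\) with \(x_0=k\neq0\). But only \(C_{n+1}-C_n\) of the \(321\)-avoiding permutations of \(\{0,\ldots,n\}\) have \(x_0\neq0\), and \(C_{n+1}-C_n<C_{n+1}-1\) for \(n\geq2\). In fact the image is not even \(321\)-avoiding: \(1\sbar2 3\in\operatorname{Av}_{B_3}(\mathcal B_{12})\) maps to \(2103\). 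The \(321\)-patterns that survive are \(k,a,0\) with \(a<k\) preceding \(0\). They are not the shape you name, which is already excluded by the increasing-prefix condition. The repair ``by \(\sigma\) or reversal-complement'' and the fallback recurrence are never specified, so nothing in the proposal establishes the count.

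\textbf{How the paper does it, and how you could close the gap.} The paper does no counting here. It cites \cite[Corollary~5.7]{Ste97} to identify \(\operatorname{Av}_{B_n}(\mathcal B_{12})\) with the fully commutative bottom elements of \(B_n\), and \cite[Proposition~5.9]{Ste97} for their number \(C_n+C_{n+1}-1\).

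If you want a self-contained argument along your lines, first sharpen your translation. For \(\ell_0(w)=1\) with negative entry \(\sbar k\), the conditions say precisely three things: the positive entries avoid \(321\), the positive entries before \(\sbar k\) increase, and the values \(1,\ldots,k-1\) occur in increasing order.

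Deleting \(\sbar k\) and standardizing then shows that the count is \(T_{n-1}\), where \(T_m=\sum_v(\alpha(v)+1)(\beta(v)+1)\) over \(321\)-avoiding \(v\in S_m\). Here \(\beta(v)\) is the length of the initial ascending run of \(v\), and \(\alpha(v)\) is the largest \(c\) such that \(1,\ldots,c\) occur in increasing order.

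The identity \(T_m=C_{m+2}-1\) still requires proof. One way is to build the \(321\)-avoiders in \(S_{m+2}\) from those in \(S_m\) in two insertions. First insert a new minimum after an increasing prefix. Then insert a new first entry whose smaller values all appear in increasing order. Comparing this count with \(T_m\) yields \(C_{m+2}-T_m=C_{m+1}-T_{m-1}\), and together with \(C_2-T_0=1\) this gives \(T_m=C_{m+2}-1\). Some such argument is exactly what your proposal is missing.
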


\begin{proof}
    By Theorem~\ref{thm:equivalence of rbg codes},
    \begin{equation*}
        \operatorname{Av}_{B_n}(\mathcal B_8)
        =
        \mathrm{Pr}(L_{B_n}),
    \end{equation*}
    and therefore
    \begin{equation*}
        \bigl|\operatorname{Av}_{B_n}(\mathcal B_8)\bigr|
        =
        C_n+C_{n+1}-1.
    \end{equation*}
    On the other hand, by~\cite[Corollary~5.7]{Ste97},
    \(\operatorname{Av}_{B_n}(\mathcal B_{12})\) is the set of fully
    commutative bottom elements of \(B_n\). By~\cite[Proposition~5.9]{Ste97},
    this set also has cardinality \(C_n+C_{n+1}-1\).
\end{proof}
\section{Palindromic Poincaré polynomials in
\texorpdfstring{$B_n$}{Bn}}

By Proposition~\ref{prop:kempf-rbg}, the type~\(B\) family
\(\Pi_q(\mathcal C_n)\) is precisely the set of Poincaré polynomials
arising from \(L_{B_n}\)-principal elements:
\begin{equation*}
    \{P_w(q):w\in\mathrm{Pr}(L_{B_n})\}
    =
    \Pi_q(\mathcal C_n).
\end{equation*}
To prove Theorem~\ref{thm:main-intro}, it remains to show that every palindromic Poincaré polynomial in~\(B_n\) belongs either to the type~\(B\) family or to the type~\(A\) family. For this, we use the following factorization rules of
Billey~\cite[Theorem~3.3]{Bil98}.
\begin{proposition}\label{prop:Billey_factorization_rules}
    Let \(z=z_1z_2\cdots z_k\in B_k\) be written in one-line notation, and suppose that \(z_d=\pm k\) and \(z_k=\pm e\), where \(1\leq d,e\leq k\).
    In each of the following five cases, one has
    \begin{equation*}
        P_z(q)=(1+q+\cdots+q^\mu)P_{z'}(q).
    \end{equation*}
    The integer \(\mu\), called the \emph{factorization parameter}, and the element \(z'\) are specified in each case.
    \begin{enumerate}[
        label=\textup{(R\arabic*)},
        ref=\textup{(R\arabic*)},
        leftmargin=*
        ]
        \item\label{rule:billey-1}
        If \(z_d=k>z_{d+1}>\cdots>z_k\), then
        \begin{equation*}
            \mu=k-d,
            \qquad
            z'=z s_d s_{d+1}\cdots s_{k-1}.
        \end{equation*}
        \item\label{rule:billey-2}
        If \(z_k=e>0\) and \(k=(z^{-1})_e>(z^{-1})_{e+1}>\cdots>(z^{-1})_k\), then
        \begin{equation*}
            \mu=k-e,
            \qquad
            z'=s_{k-1}s_{k-2}\cdots s_ez.
        \end{equation*}        
        \item\label{rule:billey-3}
        If \(z_d=\sbar k\), every entry of~\(z\) is negative, and the sequence
        obtained from \(z_1,\ldots,z_k\) by deleting \(z_d\) is strictly
        decreasing, then
        \begin{equation*}
            \mu=d+k-1,
            \qquad
            z'=z s_{d-1}s_{d-2}\cdots s_1s_0s_1\cdots s_{k-1}.
        \end{equation*}
        \item\label{rule:billey-4}
        If \(z_k=\sbar e\), every entry of~\(z\) is negative, and \( z_1>z_2>\cdots>z_{k-1}\), then 
        \begin{equation*}
            \mu=e+k-1, \qquad z'=s_{k-1}s_{k-2}\cdots s_1s_0s_1\cdots s_{e-1}z.
        \end{equation*}
        \item\label{rule:billey-5}
        If every entry of~\(z\) is positive except for \(z_d=\sbar k\), and \( z_1>z_2>\cdots>z_d\), then
        \begin{equation*}
            \mu=d,
            \qquad
            z'=z s_{d-1}s_{d-2}\cdots s_1s_0.
        \end{equation*}
    \end{enumerate}
    In \ref{rule:billey-1}--\ref{rule:billey-4}, the final entry of \(z'\) is~\(k\), and we identify \(z'\) with an element of \(B_{k-1}\) by deleting this entry.
    After this identification, the elements obtained in \ref{rule:billey-3} and \ref{rule:billey-4} are both equal to the longest element
    \[
        \sbar1\,\sbar2\cdots\sbar{k-1}
    \]
    of \(B_{k-1}\).
    In \ref{rule:billey-5}, the element \(z'\) is unsigned,
    so \(z'\in S_k\subset B_k\).

    We use the convention \(B_0=\{\mathrm{id}\}\) and interpret empty products and strings in the usual way.
    In particular, \ref{rule:billey-1} and \ref{rule:billey-2} both apply
    to the identity in \(B_k\), with \(d=e=k\) and factorization parameter
    \(0\), and produce the identity in \(B_{k-1}\).
\end{proposition}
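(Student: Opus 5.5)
Every one of the five rules has the same shape. The element \(z\) factors length-additively as \(z=z'v\) (or \(z=vz'\)), where \(z'\) lies in a maximal standard parabolic subgroup \(W_J\) and \(v\) is a minimal coset representative. The representatives for this parabolic quotient that lie below \(v\) form a chain with one element of each length from \(0\) to \(\mu\). So the plan is to prove that, in each case, the parabolic factorization of \(z\) is a Billey--Postnikov decomposition, which gives
\[
    P_z(q)=P_{z'}(q)\,P^J_v(q)=P_{z'}(q)\,[\mu+1]_q .
\]
I would not redo the general theory. Instead I would invoke the criterion of Billey--Postnikov~\cite{BP05}. For \(z=z'v\) with \(z'\in W_J\) and \(v\in {}^JW\) (minimal in \(W_Jv\)), the identity \(P_z=P_{z'}P^J_v\) holds whenever
\[
    S(v)\cap J\subseteq D_R(z'),
\]
where \(S(v)\) is the support of \(v\) and \(D_R(z')\) is the right descent set of \(z'\). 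An equivalent formulation is that \(z'\) is the maximal element of \([\mathrm{id},z]\cap W_J\). The mirror-image version, for \(z=vz'\) with left descents, follows by applying the first version to \(z^{-1}\) and using \(P_z=P_{z^{-1}}\).

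For \ref{rule:billey-1}, I would take \(J=\{0,\dots,k-2\}\), so that \(W_J=B_{k-1}\) is the stabilizer of \(k\), and \(v=s_{k-1}s_{k-2}\cdots s_d\). Then \(z'=z s_d\cdots s_{k-1}\) is obtained by moving the entry \(k\) to the last position. Its entries in positions \(d,\dots,k-1\) are \(z_{d+1}>\cdots>z_k\), so \(s_d,\dots,s_{k-2}\in D_R(z')\). This is exactly \(S(v)\cap J\). The lower interval of \(v\) in the quotient consists of the prefixes \(s_{k-1}\cdots s_j\) with \(j\geq d\), which form a chain of length \(k-d\); hence \(\mu=k-d\).

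For \ref{rule:billey-3}, I would take the same \(J\) and \(v=s_{k-1}\cdots s_1s_0s_1\cdots s_{d-1}\). Here \(z'\) is the longest element \(\sbar1\cdots\sbar{k-1}\) of \(B_{k-1}\), so its right descent set is all of \(J\) and the criterion holds trivially. The chain below \(v\) consists of the prefixes of \(\mathbf i_{[-(k-1),d-1]}\) and has length \(d+k-1\).

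For \ref{rule:billey-5}, I would instead take \(J=\{1,\dots,k-1\}\), so that \(W_J=S_k\), and \(v=s_0s_1\cdots s_{d-1}\). Then \(z'=k\,z_1\cdots z_{d-1}z_{d+1}\cdots z_k\) is unsigned, and the hypothesis \(k>z_1>\cdots>z_{d-1}\) gives \(s_1,\dots,s_{d-1}\in D_R(z')\). The quotient \(S_k\backslash B_k\) restricted below \(v\) is the chain of prefixes of \(s_0s_1\cdots s_{d-1}\), of length \(d\).

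Rules \ref{rule:billey-2} and \ref{rule:billey-4} I would obtain as the inverse forms of \ref{rule:billey-1} and \ref{rule:billey-3}. Passing from \(z\) to \(z^{-1}\) turns the conditions on positions into the stated conditions on \(z^{-1}\). For \ref{rule:billey-4}, for instance, inverting a signed permutation all of whose entries are negative and whose first \(k-1\) entries decrease gives the situation of \ref{rule:billey-3} with \(d=e\). The factorization is then read off from \(P_z=P_{z^{-1}}\).

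I expect the main obstacle to be the bookkeeping of one-line notation against left and right multiplication. The tasks there are to check that each stated \(v\) really is the minimal coset representative, to check that \(\ell(z)=\ell(z')+\ell(v)\), and to confirm that deleting the final entry \(k\) identifies \(z'\) with the claimed element of \(B_{k-1}\). If the Billey--Postnikov criterion is not accepted as a black box, the real work is its proof. In that case I would show that the multiplication map \([\mathrm{id},z']\times[\mathrm{id},v]^J\to[\mathrm{id},z]\) is a bijection, using the Subword Property (Lemma~\ref{lem:Subword Property}) together with the descent hypothesis on \(z'\), which absorbs the \(J\)-letters of \(v\).
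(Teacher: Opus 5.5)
Your proposal is correct, and it takes a genuinely different route. The paper gives no proof of this proposition: it imports the five rules from Billey~\cite[Theorem~3.3]{Bil98}, transcribed into its own conventions. You instead derive all five from the single Billey--Postnikov criterion. You realize \ref{rule:billey-1} and \ref{rule:billey-3} as parabolic decompositions over $J=\{0,\dots,k-2\}$, \ref{rule:billey-5} over $J=\{1,\dots,k-1\}$, and \ref{rule:billey-2} and \ref{rule:billey-4} as their inverse forms.

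The bookkeeping checks out:
\begin{itemize}
\item In \ref{rule:billey-1} and \ref{rule:billey-3}, your $v$ has as reduced word a prefix of $\mathbf i_{[-(k-1),k-1]}$, so it lies in the paper's chain ${}^{\langle k-1\rangle}W$.
\item In \ref{rule:billey-5}, $v=s_0s_1\cdots s_{d-1}=2\,3\cdots d\,\sbar1\,(d+1)\cdots k$ has left descent set $\{s_0\}$, so it is minimal in $S_kv$.
\item The inclusions $S(v)\cap J\subseteq D_R(z')$ hold as you state.
\item The hypotheses of \ref{rule:billey-4} force $z^{-1}=\sbar1\cdots\sbar{e-1}\,\sbar k\,\sbar e\cdots\sbar{k-1}$, which is \ref{rule:billey-3} with $d=e$.
\end{itemize}

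Your route buys an explanation rather than a quotation. The parameter $\mu$ becomes the length of the chain $[\mathrm{id},v]\cap{}^JW$. The formulas for $z'$ are verified directly in the paper's conventions (right multiplication acting on positions, $s_0$ negating the first entry). And \ref{rule:billey-5} is visibly the only rule using the other maximal parabolic $S_k\subset B_k$, which matches its special role later in the paper. The citation buys brevity and exact consistency with Billey's Theorem~4.2, which the paper needs afterwards for the existence of complete recursive factorizations. Your argument establishes only the factorization identities, but that is all this proposition asserts.

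One claim still needs a line of proof. The quotient $S_k\backslash B_k$ is not a chain (it has $2^k$ elements), so for \ref{rule:billey-5} you must show that its part below $v$ is. Let $x\leq v$ be a nontrivial minimal representative. By the Subword Property, $x$ has a reduced word that is a subword of $0\,1\cdots(d-1)$. This word must begin with $0$, since otherwise its first letter is a left descent in $J$. Its remaining increasing letters cannot skip a value either: the first letter after a gap would commute past all earlier letters and again give a left descent in $J$. Hence $x=s_0s_1\cdots s_{j-1}$ for some $j\leq d$, as you claimed.
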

We use these rules recursively.

\begin{definition}\label{def:complete-recursive-factorization}
    Let \(w\in B_n\).
    Starting with \(w\) at stage \(n\), construct a sequence of rule applications recursively as follows.
    Suppose that the element at stage \(i\) has been defined and is regarded as an element \(z\in B_k\).
    We call \(i\) the \emph{stage index} and~\(k\) the \emph{ambient index}.
    
    Then, choose one of \ref{rule:billey-1}--\ref{rule:billey-5} that applies to~\(z\).
    Let \(z'\) be the resulting element and \(\mu\) its factorization
    parameter, and set \(a_i:=\mu\). If \(i>1\), define \(z'\) to be
    the element at stage \(i-1\) and continue. If \(i=1\), stop after obtaining \(z'\) and recording \(a_1\).

    A sequence of rule applications obtained in this way is called a
    \emph{complete recursive factorization} of \(w\) if the element obtained after the application at stage \(1\) is the identity.
    The resulting tuple
    \[
        \mathbf a=(a_1,\ldots,a_n)
    \]
    is called the \emph{factorization tuple} associated with this
    complete recursive factorization, or simply a \emph{factorization
    tuple} of \(w\).
\end{definition}
Under each of \ref{rule:billey-1}--\ref{rule:billey-4}, the stage and ambient indices both decrease by one.
By contrast, \ref{rule:billey-5} decreases only the stage index.
Indeed, if it occurs at stage~\(k\), it is applied in \(B_k\) and produces an unsigned element of \(S_k\subset B_k\), which becomes the element at stage \(k-1\), still regarded as an element of \(B_k\).
Afterward, only \ref{rule:billey-1} and \ref{rule:billey-2} can occur, and both preserve unsignedness.
In particular, \ref{rule:billey-5} can occur at most once.
Thus the two indices agree until \ref{rule:billey-5} is applied, whereas afterward the ambient index is one greater than the stage index.

\begin{example}\label{ex:stage-and-ambient-indices}
    Let \(w=4321\in B_4\). Initially, both the stage and ambient
    indices are \(4\).
    Successive applications of
    \ref{rule:billey-1} give \(a_4=3\), \(a_3=2\), \(a_2=1\).
    The successive pairs (stage index, ambient index) are
    \[
        (4,4),\qquad(3,3),\qquad(2,2),\qquad(1,1).
    \]
    At stage \(1\), the current element is the identity.
    Applying \ref{rule:billey-1} with \(d=1\) gives \(a_1=0\) and produces the identity in \(B_0\).
    Thus the associated factorization tuple is \((0,1,2,3)\).

    By contrast, start with \(321\sbar4\in B_4\) at stage \(4\).
    Applying \ref{rule:billey-5} contributes \(a_4=4\) and produces
    the unsigned element \(4321\in B_4\). This element occurs at stage
    \(3\), so its stage index is \(3\), whereas its ambient index is
    still \(4\).
    The successive pairs of indices are
    \[
        (4,4),\qquad(3,4),\qquad(2,3),\qquad(1,2).
    \]
    The applications of \ref{rule:billey-1} at stages \(3,2,1\) give
    \(a_3=3\), \(a_2=2\), and \(a_1=1\).
    Hence the associated factorization tuple is \((1,2,3,4)\).
\end{example}

By~\cite[Theorem~4.2 and Lemma~4.4]{Bil98}, every rationally smooth
\(w\in B_n\) admits a complete recursive factorization. If
\(\mathbf a\) is any factorization tuple of \(w\), then
\[
    P_w(q)=\Pi_q(\mathbf a).
\]

\begin{lemma}\label{lem:billey-factor-tuple-rbg}
    Fix a complete recursive factorization of \(w\in B_n\) in which~\ref{rule:billey-5} never occurs, and let \(\mathbf a\) be its factorization tuple.
    Then \(\mathbf a\in\mathcal C_n\).
    Consequently, \(P_w(q)\in\Pi_q(\mathcal C_n)\).
\end{lemma}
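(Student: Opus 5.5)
Since \ref{rule:billey-5} never occurs, the stage and ambient indices agree throughout, so at stage \(k\) the current element \(z\) lies in \(B_k\) and \(a_k=\mu\). The plan is to read off the bounds \(0\le a_k\le 2k-1\) from the rules, and then to check the reflected bound by comparing what the rule used at stage \(k\) forces on the element passed down to stage \(k-1\).

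\emph{Bounds.} These are immediate from the rules. In \ref{rule:billey-1} and \ref{rule:billey-2} we have \(\mu=k-d\) or \(\mu=k-e\), so \(0\le a_k\le k-1\). In \ref{rule:billey-3} and \ref{rule:billey-4} we have \(\mu=d+k-1\) or \(\mu=e+k-1\), so \(k\le a_k\le 2k-1\).

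\emph{Reflected bound when stage \(k\) uses \ref{rule:billey-3} or \ref{rule:billey-4}.} Since the recursion runs from stage \(n\) down to stage \(1\), the reflected bound at index \(k\) relates \(a_k\) (computed first) to \(a_{k-1}\) (computed next). By Proposition~\ref{prop:Billey_factorization_rules}, the element passed to stage \(k-1\) is the longest element \(\sbar1\,\sbar2\cdots\sbar{k-1}\) of \(B_{k-1}\). I will check directly which rules apply to it:
\begin{itemize}
\item \ref{rule:billey-1} and \ref{rule:billey-2} cannot apply, since they require a positive entry \(k-1\) or a positive last entry.
\item \ref{rule:billey-3} applies only with \(d=k-1\).
\item \ref{rule:billey-4} applies only with \(e=k-1\).
\end{itemize}
In both remaining cases \(a_{k-1}=2k-3\). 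So the hypothesis \(a_{k-1}<2k-3\) of the reflected bound fails, and there is nothing to check.

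\emph{Reflected bound when stage \(k\) uses \ref{rule:billey-1}.} Here \(a_k=k-d\le k-1\). Assume \(a_{k-1}<2k-3\); we must show
\[
k-d-1\le a_{k-1}\le k-3+d.
\]
After deleting the final \(k\), the element \(z'\in B_{k-1}\) is \(z_1\cdots z_{d-1}z_{d+1}\cdots z_k\). Its entries in positions \(d,\dots,k-1\) form a strictly decreasing run. I then go through each rule that could apply to \(z'\) at stage \(k-1\).
\begin{itemize}
\item \ref{rule:billey-1}, with \(k-1\) in position \(d'\). If \(k-1\) lies in the run, it must be the run's first entry, so \(d'=d\). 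Otherwise \(d'<d\). Hence \(a_{k-1}=k-1-d'\ge k-1-d\). The upper bound \(k-1-d'\le k-3+d\) holds unless \(d=d'=1\), which is the boundary case \(a_k=k-1\), \(a_{k-1}=k-2\); there equality holds in the reflected bound.
\item \ref{rule:billey-3}, with \(\sbar{k-1}\) in position \(d'\). Since \(\sbar{k-1}\) is the smallest value, if it lies in the run it sits at position \(k-1\), giving \(a_{k-1}=2k-3\), which is excluded. Otherwise \(d'\le d-1\), so \(a_{k-1}=d'+k-2\le k-3+d\); the lower bound is automatic.
\item \ref{rule:billey-2} and \ref{rule:billey-4}. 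These constrain the last entry of \(z'\), which is the bottom of the decreasing run. I will do the analogous position/value bookkeeping, using that the positions of the values \(e,\dots,k-1\) must decrease.
\end{itemize}

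\emph{Reflected bound when stage \(k\) uses \ref{rule:billey-2}.} I will reduce this to the previous case by inversion. Bruhat order is invariant under \(z\mapsto z^{-1}\), so \(P_z=P_{z^{-1}}\). Inversion exchanges \ref{rule:billey-1} with \ref{rule:billey-2}, and \ref{rule:billey-3} with \ref{rule:billey-4}, with the same parameters. Thus the case \ref{rule:billey-2} at stage \(k\) followed by rule \(X\) at stage \(k-1\) becomes \ref{rule:billey-1} followed by the inverse-swapped rule. This leaves only the \ref{rule:billey-1}-first cases, where both possible successors must still be checked.

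\emph{Expected obstacle.} The main difficulty is the mixed case: \ref{rule:billey-1} at stage \(k\) followed by \ref{rule:billey-2} or \ref{rule:billey-4} at stage \(k-1\). There the first rule constrains positions while the second constrains values. I would handle it by tracking where the value \(e\) (the last entry of \(z'\), which is the minimum of the run) sits. The run forces \(e\) to be the smallest of the values \(z_{d+1},\dots,z_k\), and the decreasing inverse-position condition should then pin \(k-1-e\) (respectively \(e+k-2\)) between \(k-d-1\) and \(k-3+d\), up to the same boundary exceptions as above.

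Once all cases are verified, \(\mathbf a\in\mathcal C_n\). Billey's identity \(P_w(q)=\Pi_q(\mathbf a)\) then gives \(P_w(q)\in\Pi_q(\mathcal C_n)\).
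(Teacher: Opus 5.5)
Your route is the paper's. You read the bounds off the formulas for \(\mu\). You observe that \ref{rule:billey-3} or \ref{rule:billey-4} at stage \(k\) hands down \(\sbar1\cdots\sbar{k-1}\), which forces \(a_{k-1}=2k-3\). You use inversion to reduce to \ref{rule:billey-1} at stage \(k\), and then split on the rule used at stage \(k-1\). Your subcases \ref{rule:billey-1}\(\to\)\ref{rule:billey-1} and \ref{rule:billey-1}\(\to\)\ref{rule:billey-3} are correct. In the former there is in fact no exception at \(d=d'=1\): since \(d+d'\geq2\) always, the upper inequality always holds.

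\textbf{The gap.} The two mixed subcases, \ref{rule:billey-1} followed by \ref{rule:billey-2} or by \ref{rule:billey-4}, are only announced, not proved. Moreover, the strategy you sketch for them (tracking the last value \(e\)) fails for \ref{rule:billey-4}.

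In each subcase only one inequality needs real work:
\begin{itemize}
\item If stage \(k-1\) uses \ref{rule:billey-1} or \ref{rule:billey-2}, then \(a_{k-1}\leq k-2\) and \(a_k\leq k-1\) already give \(a_k\leq 2k-3-a_{k-1}\).
\item If stage \(k-1\) uses \ref{rule:billey-3} or \ref{rule:billey-4}, then \(a_k\leq k-1\leq a_{k-1}\) already gives \(a_k\leq a_{k-1}+1\).
\end{itemize}

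\emph{The case \ref{rule:billey-2} with \(z'_{k-1}=e>0\).} You need \(e\leq d\). This is clear if \(d=k\). If \(d<k\), then \(e=z_k\), and \(z_{d+1},\dots,z_{k-1}\) are \(k-d-1\) distinct values in \(\{e+1,\dots,k-1\}\). Hence \(k-d-1\leq k-1-e\). The inverse-position hypothesis of \ref{rule:billey-2} is not needed.

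\emph{The case \ref{rule:billey-4} with \(z'_{k-1}=\sbar e\).} Here \(a_{k-1}<2k-3\) gives \(e\leq k-2\), and you need \(d\geq e+1\). Counting the run entries above \(\sbar e\) only yields \(d\geq k-e\), which is too weak. For instance, with \(k=5\) and \(z'=\sbar1\sbar2\sbar4\sbar3\) it gives \(d\geq2\), whereas you need \(d\geq4\).

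The missing observation concerns \(\sbar{k-1}\), not \(\sbar e\):
\begin{itemize}
\item Since \(e\leq k-2\), the entry \(\sbar{k-1}\) is the minimum of \(z'_1>\cdots>z'_{k-2}\), so it sits in position \(k-2\).
\item Being smaller than \(\sbar e\), it cannot lie in the decreasing run \(z'_d>\cdots>z'_{k-1}=\sbar e\).
\item Hence \(d\geq k-1\geq e+1\), and so \(a_k=k-d\leq k-e-1=2k-3-a_{k-1}\).
\end{itemize}
Equivalently: if \(d\leq k-2\), then \(z'_{k-2}>z'_{k-1}\), so \(z'\) would be entirely decreasing. That forces \(z'=\sbar1\cdots\sbar{k-1}\) and \(e=k-1\), a contradiction. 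With these two subcases supplied, your argument coincides with the paper's proof.
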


\begin{proof}
    Since \ref{rule:billey-5} never occurs, at every stage \(k\), one of
    \ref{rule:billey-1}--\ref{rule:billey-4} is applied to an element of
    \(B_k\).
    Their formulas give \(0\leq a_k\leq2k-1\).
    
    It remains to verify the reflected bounded-growth condition.

    Fix \(2\leq k\leq n\) and suppose that
    \begin{equation*}
        a_{k-1}<2k-3.
    \end{equation*}
    If \ref{rule:billey-3} or \ref{rule:billey-4} were used at stage~\(k\),
    then the current element at stage \(k-1\) would be the longest element
    of \(B_{k-1}\), giving \(a_{k-1}=2k-3\), a contradiction. Thus
    \ref{rule:billey-1} or \ref{rule:billey-2} is used at stage~\(k\).
    Applying inversion throughout the factorization preserves its
    factorization parameters and exchanges \ref{rule:billey-1} with \ref{rule:billey-2}, and \ref{rule:billey-3} with \ref{rule:billey-4}.
    We may therefore assume that \ref{rule:billey-1} is used at stage \(k\).
    
    Let \(z\in B_k\) be the element at this stage and let \(d\) be the
    position of \(k\). Then
    \[
        z_d=k>z_{d+1}>\cdots>z_k,
        \qquad
        a_k=k-d.
    \]
    Let \(z'\in B_{k-1}\) be the element obtained by deleting \(k\)
    from \(z\).

    Suppose first that \ref{rule:billey-1} or
    \ref{rule:billey-2} is used at stage \(k-1\). Let \(b\) be the
    position of \(k-1\) in \(z'\) in the first case, and let
    \(b=z'_{k-1}>0\) in the second. In either case,
    \[
        a_{k-1}=k-1-b.
    \]
    We claim that \(b\leq d\).
    In the \ref{rule:billey-1} case, if \(k-1\) occurs after \(k\)
    in \(z\), then the decreasing-suffix condition forces
    \(z_{d+1}=k-1\). After deleting \(k\), the position of \(k-1\) is
    therefore \(b=d\). Otherwise, \(k-1\) occurs at some position \(r<d\),
    which is unchanged when \(k\) is deleted, so \(b=r<d\).
    In the \ref{rule:billey-2} case,
    the claim is immediate if \(d=k\). If \(d<k\), then \(z_k=b\), and the \(k-d-1\) entries
    \(z_{d+1},\ldots,z_{k-1}\) are distinct elements of
    \(\{b+1,\ldots,k-1\}\).
    Hence
    \[
        k-d-1\leq k-b-1,
    \]
    which again gives \(b\leq d\). Therefore
    \[
        a_k=k-d\leq k-b=a_{k-1}+1.
    \]
    Moreover, \(a_{k-1}\leq k-2\) and \(a_k\leq k-1\), so
    \[
        a_k\leq k-1\leq2k-3-a_{k-1}.
    \]

    Suppose instead that \ref{rule:billey-3} or
    \ref{rule:billey-4} is used at stage \(k-1\). Let \(b\) be the
    position of \(\sbar{k-1}\) in \(z'\) in the first case, and write
    \(z'_{k-1}=\sbar b\) in the second. In either case,
    \[
        a_{k-1}=k-2+b.
    \]
    The assumption \(a_{k-1}<2k-3\) therefore gives \(b\leq k-2\).

    In the \ref{rule:billey-3} case, \(\sbar{k-1}\) is the smallest
    entry of \(z'\) but is not its final entry. It cannot therefore
    occur in the decreasing suffix of \(z\) following \(k\), and hence
    \(d\geq b+1\). In the \ref{rule:billey-4} case,
    \(\sbar{k-1}\) occupies position \(k-2\) in \(z'\), and the same
    argument gives
    \[
        d\geq k-1\geq b+1.
    \]
    Thus, in either case,
    \[
        a_k=k-d
        \leq k-b-1
        =2k-3-a_{k-1}.
    \]
    Finally, \(a_{k-1}\geq k-1\) and \(a_k\leq k-1\), so
    \(a_k\leq a_{k-1}+1\).

    Hence the reflected bounded-growth condition holds for every~\(k\),
    and therefore \(\mathbf a\in\mathcal C_n\). The identity
    \(P_w(q)=\Pi_q(\mathbf a)\) gives the final assertion.
\end{proof}

The exclusion of \ref{rule:billey-5} is necessary.
Indeed, the second factorization in Example~\ref{ex:stage-and-ambient-indices} has factorization tuple
\[
    (1,2,3,4)\notin\mathcal C_4,
\]
since \(a_2=2<3\), and \(3=a_3>3-a_2=1\).

Since adjoining an initial zero does not change the \(q\)-integer
product, the same polynomial is represented in the type~\(A\) family by
\[
    (0,1,2,3,4)\in\widehat F_5\subseteq\mathcal C_5.
\]

The main application of Theorem~\ref{thm: iso posets An to Bn intro} is the following lemma, which gives the key dichotomy needed to prove Theorem~\ref{thm:main-intro}.
\begin{lemma}\label{lem:billey-rbg-or-bottom}
    Let \(w\in B_n\) be rationally smooth.
    Then
    \begin{equation*}
        P_w(q)\in \Pi_q(\mathcal C_n)
        \quad\text{or}\quad
        P_w(q)\in \{P_x(q) : x\in \mathrm{RSm}(A_n)\}.
    \end{equation*}
\end{lemma}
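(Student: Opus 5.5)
Fix a complete recursive factorization of the rationally smooth element \(w\); one exists by \cite[Theorem~4.2 and Lemma~4.4]{Bil98}. Split into two cases according to whether \ref{rule:billey-5} occurs in it.

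If \ref{rule:billey-5} never occurs, Lemma~\ref{lem:billey-factor-tuple-rbg} gives \(P_w(q)\in\Pi_q(\mathcal C_n)\) directly.

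Suppose instead that \ref{rule:billey-5} occurs, necessarily exactly once, say at stage \(k\). Let \(z\in B_k\) be the element at stage \(k\); it is the element obtained from \(w\) by the earlier applications at stages \(n,\ldots,k+1\). Each of those applications uses \ref{rule:billey-1}--\ref{rule:billey-4}, and stage and ambient indices agree up to stage \(k\). By the hypothesis of \ref{rule:billey-5}, \(z\) has exactly one negative entry, \(z_d=\sbar k\), so \(\ell_0(z)=1\).

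The earlier stages cannot have used \ref{rule:billey-3} or \ref{rule:billey-4}, since those produce the longest element of \(B_{j-1}\), which is all-negative. That contradicts the later occurrence of \ref{rule:billey-5}, which requires exactly one negative entry, unless \(j-1\le 1\). I will check this edge case (\(k=1\)) separately; in that case the tuple is tiny and lies in \(\mathcal C_n\) anyway. So all stages above \(k\) use \ref{rule:billey-1} or \ref{rule:billey-2}. These only delete the maximal value \(\pm j\), which must be positive (\(z_d=j\) in \ref{rule:billey-1}, and in \ref{rule:billey-2} the entry \(j\) sits at position \((z^{-1})_e\)), and they preserve the other signs.

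Hence \(w\) itself has exactly one negative entry, namely \(\sbar k\), and all entries \(k+1,\ldots,n\) are positive. In particular \(\ell_0(w)\le 1\), so by Theorem~\ref{thm: iso posets An to Bn intro} we have \(w=\operatorname{bot}(x)\) for a unique \(x\in A_n\). Corollary~\ref{cor:intro An KL in Bn}\ref{cor:intro An KL in Bn-poincare} then gives \(P_x(q)=P_w(q)\). This polynomial is palindromic, so \(x\in\mathrm{RSm}(A_n)\) and \(P_w(q)\in\{P_x(q):x\in\mathrm{RSm}(A_n)\}\).

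Actually the second case needs less than this. Whenever \(\ell_0(w)\le1\), the bottom-map argument applies directly. So the cleanest organization is a dichotomy on \(\ell_0(w)\), or rather on whether \ref{rule:billey-5} occurs. If it occurs, I only need to show \(\ell_0(w)=1\), which is the sign-tracking argument above.

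The main obstacle is justifying that the stages before \ref{rule:billey-5} introduce no negative entries that are removed. I must check carefully that \ref{rule:billey-1} and \ref{rule:billey-2} delete a positive entry: in \ref{rule:billey-2} the deleted value is \(k\) at position \((z^{-1})_e\), and the hypothesis \(z_k=e>0\) together with \(z^{-1}\) written in one-line notation forces that entry to be \(+k\). I must also rule out \ref{rule:billey-3} and \ref{rule:billey-4} occurring before \ref{rule:billey-5}. If this sign-tracking becomes delicate, the fallback is the following. Rule \ref{rule:billey-5} turns \(z\) into an unsigned element with one application of \(s_0\), so \(z\) has a reduced word with a single zero. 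The earlier steps multiply by words containing no zero in cases \ref{rule:billey-1} and \ref{rule:billey-2}. Consequently \(w\) has a reduced word with exactly one zero, and by the discussion after Definition~\ref{def:ell_0} this gives \(\ell_0(w)=1\).
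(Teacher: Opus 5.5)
Your overall structure matches the paper's: use Lemma~\ref{lem:billey-factor-tuple-rbg} when \ref{rule:billey-5} is absent, and otherwise show \(\ell_0(w)=1\) and apply the bottom map. However, the case where \ref{rule:billey-3} or \ref{rule:billey-4} precedes \ref{rule:billey-5} is not actually handled.

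First, the reduction to ``\(k=1\)'' needs an argument you do not give. In rank \(\geq2\), none of \ref{rule:billey-1}, \ref{rule:billey-2}, \ref{rule:billey-5} applies to an all-negative element, and \ref{rule:billey-3}, \ref{rule:billey-4} output the all-negative longest element. So once \ref{rule:billey-3} or \ref{rule:billey-4} is used at any stage \(j\geq2\) (not only \(j=2\)), the element stays all-negative down to \(\sbar1\in B_1\). Hence \ref{rule:billey-5} can then occur only at stage \(1\).

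Second, your dismissal of this case (``the tuple is tiny'') is wrong. Take \(w=\sbar1\sbar2\cdots\sbar n\) and use \ref{rule:billey-3} at stages \(n,\ldots,2\) and \ref{rule:billey-5} at stage \(1\). This is a complete recursive factorization with tuple \((1,3,\ldots,2n-1)\). Moreover \(\ell_0(w)=n\), so for \(n\geq2\) the bottom-map branch is unavailable.

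The missing step is this: at \(\sbar1\in B_1\), rule \ref{rule:billey-3} also applies, with the same parameter \(\mu=1\), and also ends at the identity. Swapping it in gives a \ref{rule:billey-5}-free complete factorization with the same tuple, and Lemma~\ref{lem:billey-factor-tuple-rbg} then applies. The paper avoids this bookkeeping differently. It first uses a \ref{rule:billey-5}-free factorization whenever one exists. Otherwise, any \ref{rule:billey-3} or \ref{rule:billey-4} occurring before \ref{rule:billey-5} would let one replace the tail by an \ref{rule:billey-3}/\ref{rule:billey-4}-only factorization of the longest element, a contradiction.

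Separately, your claim that the negative entry of \(w\) is \(\sbar k\), and that \(k+1,\ldots,n\) appear positively, is false. Rule \ref{rule:billey-2} multiplies on the left and therefore relabels values. For \(w=1\sbar32\in B_3\), rule \ref{rule:billey-2} (with \(e=2\), \(\mu=1\)) gives \(1\sbar2\in B_2\), and \ref{rule:billey-5} then applies at stage \(2\). Yet the negative entry of \(w\) is \(\sbar3\). Likewise, in \ref{rule:billey-2} the entry \(\pm k\) of \(z\) may be negative; what is deleted is the final entry \(+k\) of \(z'\), which comes from \(z_k=e>0\). What is preserved is only \(\ell_0\), since left multiplication by \(s_i\) with \(i\geq1\) keeps the sign in each position. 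That is all you need, and your fallback (a word for \(w\) with a single \(0\)) correctly yields \(\ell_0(w)=1\).
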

\begin{proof}
    By the preceding discussion, \(w\) admits a complete recursive
    factorization. If one can be chosen in which
    \ref{rule:billey-5} never occurs, then
    Lemma~\ref{lem:billey-factor-tuple-rbg} gives
    \[
        P_w(q)\in\Pi_q(\mathcal C_n),
    \]
    and the first alternative holds. We may therefore suppose that every
    complete recursive factorization of \(w\) uses
    \ref{rule:billey-5}.
    Choose one such factorization. Suppose that
    \ref{rule:billey-5} occurs at stage \(m\), and let \(y\in B_m\) be the
    current element.    
    
    We claim that only \ref{rule:billey-1} and~\ref{rule:billey-2} can occur before the application of \ref{rule:billey-5}.
    Indeed, if either \ref{rule:billey-3} or
    \ref{rule:billey-4} occurred at stage~\(k\) before
    \ref{rule:billey-5}, then it would produce the longest element of
    \(B_{k-1}\).
    This element admits a complete recursive factorization using only \ref{rule:billey-3} and \ref{rule:billey-4}.
    Replacing the remaining part of the chosen factorization by such a factorization would therefore produce a complete recursive factorization of~\(w\) in which \ref{rule:billey-5} never occurs, contradicting our assumption.
    
    Recall that \(\ell_0(v)\) is the number of negative entries of
    \(v\in B_k\). Rules \ref{rule:billey-1} and \ref{rule:billey-2}
    preserve \(\ell_0\): their formulas involve only generators \(s_i\)
    with \(i\geq1\), and in both cases the deleted entry~\(k\) is positive.
    The hypotheses of \ref{rule:billey-5} imply that \(y\) has exactly one negative entry, namely \(\sbar m\).
    Therefore,
    \begin{equation*}
        \ell_0(w)=\ell_0(y)=1.
    \end{equation*}
    
    By Proposition~\ref{prop: rex of x inter min zeros}, there exists
    \(x\in A_n\) such that
    \begin{equation*}
        w=\operatorname{bot}(x).
    \end{equation*}
    By Corollary~\ref{cor:intro An KL in Bn}%
    \ref{cor:intro An KL in Bn-poincare},
    we have \(P_x(q)=P_w(q)\).
    Since \(w\) is rationally smooth, \(P_w(q)\) is palindromic.
    Hence
    \(P_x(q)\) is palindromic, so \(x\in\mathrm{RSm}(A_n)\).
    It follows that
    \begin{equation*}
        P_w(q)\in\{P_x(q) : x\in\mathrm{RSm}(A_n)\}.\qedhere
    \end{equation*}
\end{proof}

\begin{proof}[Proof of Theorem~\ref{thm:main-intro}]
    Lemma~\ref{lem:billey-rbg-or-bottom} gives the inclusion from left
    to right.

    Conversely, let \(\mathbf a\in\mathcal C_n\) and set
    \(u=L_{B_n}^{-1}(\mathbf a)\). By
    Proposition~\ref{prop:kempf-rbg}, the element \(u\) is
    \(L_{B_n}\)-principal, and hence \(P_u(q)=\Pi_q(\mathbf a)\).
    Since \(\Pi_q(\mathbf a)\) is a product of \(q\)-integers, it is
    palindromic, so \(u\in\mathrm{RSm}(B_n)\).
    Thus every polynomial in
    the type~\(B\) family occurs in \(B_n\).

    Now let \(x\in\mathrm{RSm}(A_n)\). By
    Corollary~\ref{cor:intro An KL in Bn}%
    \ref{cor:intro An KL in Bn-poincare}, \(P_x(q)=P_{\operatorname{bot}(x)}(q)\).
    Hence \(\operatorname{bot}(x)\in\mathrm{RSm}(B_n)\), so every polynomial in the type~\(A\) family also occurs in~\(B_n\).
\end{proof}


\end{document}